\newcommand{\CC}{\mathbb{C}}
\newcommand{\ZZ}{\mathbb{Z}}
\newcommand{\sS}{\mathbb{S}}
\newcommand{\NN}{\mathbb{N}}
\newcommand{\RR}{\mathbb{R}}
\newcommand{\calC}{\mathcal{C}}
\newcommand{\calB}{\mathcal{B}}
\newcommand{\calA}{\mathcal{A}}
\newcommand{\calX}{\mathcal{X}}
\newcommand{\calD}{\mathcal{D}}
\newcommand{\scrD}{\mathscr{D}}
\newcommand{\vphi}{\varphi}
\newcommand{\Id}{\operatorname{Id}}
\newcommand{\pl}[1]{\foreignlanguage{polish}{#1}}
\newcommand{\abs}[1]{\lvert {#1} \rvert}
\newcommand{\sprod}[2]{\langle {#1}, {#2} \rangle}
\newcommand{\tr}{\operatorname{tr}}
\newcommand{\GL}{\operatorname{GL}}
\newcommand{\Mat}{\operatorname{Mat}}
\newcommand{\diag}{\operatorname{diag}}
\newcommand{\discr}{\operatorname{discr}}
\newcommand{\sym}{\operatorname{sym}}
\newcommand{\per}{{\rm per}}
\newcommand{\sigmaEss}[1]{\sigma_{\mathrm{ess}}(#1)}
\newcommand{\ud}{{\: \rm d}}
\newtheorem{theorem}{Theorem}
\newtheorem{proposition}{Proposition}
\newtheorem{lemma}{Lemma}
\newtheorem{corollary}{Corollary}
\newtheorem{claim}{Claim}
\newtheorem*{theorem*}{Theorem}
\theoremstyle{definition}
\newtheorem{remark}{Remark}
\newtheorem{definition}{Definition}
\numberwithin{equation}{section}
\theoremstyle{plain}
\newcounter{thm}
\newtheorem{main_theorem}[thm]{Theorem}
\newcounter{cor}
\title[Asymptotics of orthogonal polynomials]
{Asymptotics of orthogonal polynomials with slowly oscillating recurrence coefficients}
\date{\today}
\author{Grzegorz Świderski}
\address{
	\pl{
		Grzegorz Świderski\\
		The Institute of Mathematics\\
   	 	Polish Academy of Science\\
    	ul. \'Sniadeckich 8\\
    	00-696 Warszawa\\
    	Poland}
}
\email{gswiderski@impan.pl}
\author{Bartosz Trojan}
\address{
	\pl{
	Bartosz Trojan\\
	The Institute of Mathematics\\
	Polish Academy of Science\\
	ul. \'Sniadeckich 8\\
	00-696 Warszawa\\
	Poland}
}
\email{btrojan@impan.pl}
\keywords{Orthogonal polynomials, asymptotic, Turán determinants, absolute continuity, Jacobi matrix}
\subjclass[2010]{Primary: 42C05, 47B36.}
\begin{document}
\selectlanguage{english}

\begin{abstract}
We study solutions of three-term recurrence relations whose $N$-step transfer matrices belong to
the uniform Stolz class. In particular, we derive the first order of their uniform asymptotics. 
For orthonormal polynomials we show more. Namely, we find the constructive formula for 
the density of their orthogonality measure in terms of Tur\'an determinants and we determine their
exact asymptotic behavior. We treat both bounded and unbounded cases in a uniform manner. 
\end{abstract}

\maketitle

\section{Introduction}
Let us consider two sequences $a = (a_n \colon n \in \NN_0)$ and $b = (b_n \colon n \in \NN_0)$ of positive 
and real numbers, respectively. Then one defines the symmetric tridiagonal matrix by the formula
\begin{equation*}
  \calA =
  \begin{pmatrix}
     b_0 & a_0 & 0   & 0      &\ldots \\
     a_0 & b_1 & a_1 & 0       & \ldots \\
     0   & a_1 & b_2 & a_2     & \ldots \\
     0   & 0   & a_2 & b_3   &  \\
     \vdots & \vdots & \vdots  &  & \ddots
  \end{pmatrix}.
\end{equation*}
The action of $\calA$ on \emph{any} sequence is defined by the formal matrix multiplication. 
Let $A$ be the minimal operator associated with $\calA$, that is the closure in $\ell^2$ of
the restriction of $\calA$ to the set of sequences having finite support. Here $\ell^2$
denotes the Hilbert space of square-summable sequences endowed with the scalar product
\[
	\sprod{x}{y}_{\ell^2} = \sum_{n=0}^\infty x_n \overline{y_n}.
\]
The operator $A$ is called a~\emph{Jacobi matrix.} Since both deficiency indices of $A$ are equal 
(see, e.g. \cite[Corollary 6.7]{Schmudgen2017}), it always has a self-adjoint extension which is unique, 
if the Carleman condition
\[
	\sum_{n=0}^\infty \frac{1}{a_n} = \infty
\]
is satisfied (see, e.g. \cite[Corollary 6.19]{Schmudgen2017}).

A generalized eigenvector $(u_n : n \in \NN_0)$ associated with $x \in \RR$ is a non-zero 
sequence satisfying the recurrence relation
\[
	a_{n-1} u_{n-1} + b_n u_n + a_n u_{n+1} = x u_n, \qquad n \geq 1,
\]
which can be written as
\[
	\begin{pmatrix}
		u_n \\
		u_{n+1}
	\end{pmatrix}
	=
	B_n(x)
	\begin{pmatrix}
		u_{n-1} \\
		u_n
	\end{pmatrix}
\]
where
\[
	B_n(x) = 
	\begin{pmatrix}
		0 & 1 \\
		-\frac{a_{n-1}}{a_n} & 	\frac{x-b_n}{a_n}
	\end{pmatrix}.
\]
For each $\alpha \in \RR^2 \setminus \{0\}$ there is the unique generalized eigenvector $(u_n)$ such that
$(u_0, u_1) = \alpha$. 

By $(p_n(x) : n \in \NN_0)$ we denote the generalized eigenvectors associated with $x$ and satisfying the initial
condition
\[
	p_0(x) = 1, \qquad p_1(x) = \frac{x-b_0}{a_0}.
\]
If $\tilde{A}$ is a self-adjoint extension of $A$, then $E_{\tilde{A}}$, the spectral resolution of the identity for 
$\tilde{A}$, gives rise to the Borel measure on $\RR$,
\begin{equation} \label{eq:156}
	\tilde{\mu}(B) = \sprod{E_{\tilde{A}}(B) \delta_0}{\delta_0}_{\ell^2}
\end{equation}
where $\delta_0$ is the sequence having $1$ on the $0$th position and $0$ elsewhere. The polynomials
$(p_n : n \in \NN_0)$ form an orthonormal basis in $L^2(\RR, \tilde{\mu})$, which is the Hilbert space of
square-integrable functions equipped with the scalar product
\[
	\sprod{f}{g}_{L^2(\tilde{\mu})} = \int_\RR f(x) \overline{g(x)} \: \tilde{\mu}({\rm d} x).
\]
Each self-adjoint extension of $A$ leads to a different measure. It turns out that, if the self-adjoint extension is not
unique then the measures \eqref{eq:156} are purely discrete (see, e.g. \cite[Theorem 7.7]{Schmudgen2017}). Moreover, 
there are measures $\nu$ such that $(p_n)$ are orthonormal in $L^2(\RR, \nu)$, but do not correspond 
to any self-adjoint extension of $A$ (see, e.g. \cite[Theorem 1]{Berg1981}). 

The asymptotic behavior of generalized eigenvectors entail properties of the operator $A$. 
Specifically, the operator $A$ is self-adjoint if and only if there is a generalized eigenvector 
which is not square-summable (see, e.g. \cite[Theorem 6.16]{Schmudgen2017}). Moreover, in view of the
subordination theory (see, e.g. \cite[Theorem 2.1]{Clark1996}), for any interval $I \subset \RR$, if $A$ is self-adjoint
and for every pair of generalized eigenvectors $(u_n)$ and $(v_n)$ associated with $x \in I$, 
\[
	\limsup_{n \to \infty} \frac{\sum_{j = 0}^n u_j^2}{\sum_{j = 0}^n v_j^2} < \infty,
\]
then the operator $A$ is absolutely continuous on $I$, and $I$ is in the spectrum of $A$.

The aim of this paper is threefold: determine the asymptotic behavior of the generalized eigenvectors, find the density
of an orthogonality measure, and determine the asymptotic behavior of the orthogonal polynomials. About the
recurrence coefficients we assume that they are slowly oscillating. To be more precise, given a compact set
$K \subset \RR$ and $r \in \NN$, we say that the uniformly bounded sequence of continuous mappings 
$A_n : K \rightarrow \GL(2, \RR)$
belongs to $\calD_r \big( K, \GL(2, \RR) \big)$, if for each $j \in \{1, \ldots, r\}$,
\[
	\sum_{n \geq 1} \sup_{x \in K} \big\|\Delta^j A_n(x) \big\|^\frac{r}{j} < \infty
\]
where
\begin{align*}
	\Delta^0 A_n &= A_n, \\
	\Delta^j A_n &= \Delta^{j-1} A_{n+1} - \Delta^{j-1} A_n, \qquad n \geq 1.
\end{align*}
Our first result discuss the asymptotic behavior of generalized eigenvectors.
\begin{main_theorem} 
	\label{thm:A}
	Let $N$ and $r$ be positive integers and $i \in \{0, \ldots, N-1\}$. We set 
	\[
		X_n = \prod_{j = n}^{n+N-1} B_j.
	\]
	Let $K$ be a compact subset of
	\footnote{A discriminant of a $2 \times 2$ matrix $X$ is $\discr X = (\tr X)^2 - 4 \det X$.}
	\[
		\Lambda = 
		\Big\{x \in \RR : \lim_{n \to \infty} 
		\discr\big(X_{nN+i}(x)\big) \text{ exists and is negative}
		\Big\}.
	\]
	Assume that
	\[
		\lim_{n \to \infty} \frac{a_{(n+1)N+i-1}}{a_{nN+i-1}} = 1.
	\]
	If $(X_{nN+i} : n \in \NN)$ belongs to $\calD_r \big( K, \GL(2, \RR) \big)$, then there is a constant $c>1$
	such that for every generalized eigenvector $(u_n : n \in \NN_0)$ associated with $x \in K$, and all $n \geq 1$,
	\[
		c^{-1} \big( u_0^2 + u_1^2 \big) 
		\leq 
		a_{nN+i-1} \big( u_{nN+i-1}^2 + u_{nN+i}^2 \big)
		\leq
		c \big( u_0^2 + u_1^2 \big).
	\]
\end{main_theorem}
Theorem \ref{thm:A} is a consequence of Corollary~\ref{cor:8} whose proof relies on studying 
the asymptotic behavior of $N$-shifted generalized Tur\'an determinants defined for a generalized
eigenvector $(u_n : n \in \NN_0)$ by
\[
	S_n = a_{n+N-1} \big( u_n u_{n+N-1} - u_{n-1} u_{n+N} \big).
\]
To make the most from the slowly oscillating nature of the sequence $(X_{nN+i} : n \in \NN)$, we extend and simplify the
method of iterated diagonalization introduced in \cite{Stolz1994}, see Section \ref{sec:1} for details.

Classical Tur\'an determinants, that is the expression $p_n^2(x) - p_{n+1}(x) p_{n-1}(x)$, were used for the first time 
in \cite{Turan1950} as a tool in studying the zeros of Legendre polynomials. In fact, it was observed that they
are non-negative on the support of the orthogonality measure. Later, in \cite{Nevai1979}, the convergence of the Tur\'an
determinants was investigated for orthogonal polynomials with recurrence coefficients satisfying
\begin{equation} 
	\label{eq:132}
	\sum_{n=0}^\infty |a_n - 1| < \infty, \qquad\text{and}\qquad
	\sum_{n=0}^\infty |b_n| < \infty.
\end{equation}
It was proven that the limit is related to the density of the orthogonality measure. In \cite{Nevai1983}, the
assumption \eqref{eq:132} was replaced by the condition of bounded variation imposed on $(a_n)$ and $(b_n)$, which
was extended in \cite{GeronimoVanAssche1991} to their $N$-periodic perturbations. See also \cite{Nevai1987} for the subsequent developments. Recently, in \cite{PeriodicII},
the first author obtained analogous result for unbounded sequences $(a_n)$ and $(b_n)$.

Meantime, in \cite{Totik1985}, it was observed that the method successful in proving convergence of Tur\'an determinants
can also be used in studying the asymptotic behavior of the orthogonal polynomials $(p_n)$ with
recurrence coefficients having bounded variation. Their $N$-periodic perturbations were investigated in 
\cite{GeronimoVanAssche1991}. Lastly, the case of unbounded sequences was recently considered in
\cite{AptekarevGeronimo2016}.

The initial study of the slowly oscillating sequences goes back to \cite{Stolz1994} where the absolute continuity
was obtained for the case $a_n \equiv 1$. The pointwise asymptotic formula for generalized eigenvector was proven in
\cite{Moszynski2006}. Finally, in \cite{Moszynski2009}, the absolutely continuity was studied both in bounded and
unbounded cases.

The following theorem provides a formula for the absolutely continuous part of an orthogonality
measure. It is a consequence of Corollary~\ref{cor:9}.
\begin{main_theorem}
	\label{thm:B}
	Let $N$ and $r$ be positive integers and $i \in \{0, 1, \ldots, N-1\}$. Let $K$ be a compact interval contained in
	\[
		\Lambda = 
		\Big\{x \in \RR : \lim_{n \to \infty} 
		\discr\big(X_{nN+i}(x)\big) \text{ exists and is negative}
		\Big\}.
	\]
	Assume that
	\[	
		\lim_{n \to \infty} \frac{a_{(n+1)N+i-1}}{a_{nN+i-1}} = 1.
	\]
	If $(X_{nN+i} : n \in \NN)$ belongs to $\calD_r \big(K, \GL(2, \RR))$, then there is a positive function
	$g: K \rightarrow \RR$, such that
	\[
		\lim_{\stackrel{n \to \infty}{n \equiv i \bmod N}} 
		\sup_{x \in K}
		\Big|
		a_{n+N-1} \big|p_{n}(x) p_{n+N-1}(x) - p_{n-1}(x) p_{n+N}(x) \big| - g(x)
		\Big|
		=0.
	\]
	Moreover, there is a probability measure $\nu$ such that $(p_n : n \in \NN_0)$ are orthonormal in $L^2(\RR, \nu)$,
	which is absolutely continuous on $K$ with the density
	\[
		\nu'(x) = \frac{\sqrt{-h(x)}}{2\pi g(x)}, \qquad x \in K
	\]
	where
	\[
		h(x) = \lim_{n \to \infty} \discr \big( X_{nN+i}(x) \big), \qquad x \in K.
	\]
\end{main_theorem}
The main result of this article is the following theorem which gives the uniform asymptotic of the orthogonal
polynomials $(p_n : n \in \NN_0)$, see Corollary~\ref{cor:7} for the proof.
\begin{main_theorem}
	\label{thm:C}
	Let $N$ and $r$ be positive integers and $i \in \{0, 1, \ldots, N-1\}$. Suppose that $K$ is a compact
	interval contained in
	\[
		\Lambda = 
		\Big\{x \in \RR : \lim_{n \to \infty} 
		\discr\big(X_{nN+i}(x)\big) \text{ exists and is negative}
		\Big\}.
	\]
	Assume that
	\[
		\lim_{n \to \infty} \frac{a_{(n+1)N+i-1}}{a_{nN+i-1}} = 1,
	\]
	and
	\[
		(X_{nN+i} : n \in \NN) \in \calD_r \big(K, \GL(2, \RR) \big).
	\]
	Suppose that $\calX: K \rightarrow \GL(2, \RR)$ is the limit of $(X_{nN+i} : n \in \NN_0)$. Then there are $M \geq 1$ and a continuous real-valued function $\eta$ such that for all $n \geq M$,
	\[
		\lim_{n \to \infty} \sup_{x \in K} 
		\bigg|
		 \sqrt{a_{nN+i-1}} p_{nN+i}(x) -
		 \sqrt{\frac{2 \big|[\calX(x)]_{2,1}\big|}{\pi \nu'(x) \sqrt{-h(x)}}}
		 \sin \Big( \sum_{j=M+1}^{n} \theta_j(x) + \eta(x) \Big)
		\bigg| = 0.
	\]
	where $\nu$ and $h$ are as in Theorem \ref{thm:B}, and $\theta_j: K \rightarrow (0, \pi)$ are some
	continuous functions satisfying
	\[
		\lim_{j \to \infty} \sup_{x \in K} 
		\bigg| \theta_j(x) - \arccos \Big( \tfrac{1}{2} \tr \calX(x) \Big) \bigg| = 0.
	\]
\end{main_theorem}
Theorems \ref{thm:B} and \ref{thm:C} are consequences of Theorems \ref{thm:6} and \ref{thm:3}. 
Their proofs are based on a proper truncation and periodization of the recurrence coefficients. This method allows us
to show the convergence by applying the corresponding results for eventually periodic sequences.

Let us close the introduction by giving an example of new sequences covered by Theorems \ref{thm:A},
\ref{thm:B} and \ref{thm:C}. For $\gamma \in (0, 1)$, we set
\[
	a_n \equiv 1, \qquad b_n = \frac{\cos(n^\gamma)}{\log(n+2)}.
\]
Then the hypotheses of our theorems are satisfied for $N=1$ and any $r > 1/(1-\gamma)$. 
Nevertheless, $(b_n : n \in \NN_0)$ is neither of bounded variation nor belongs to any $\ell^p$. 
Hence, the results of \cite{Lukic2011} and \cite{Nevai1983} cannot be applied. In Section
\ref{sec:2}, we present more applications and special cases covered by this paper.

The structure of the paper is as follows: In Section \ref{sec:1}, we develop the main tool, Theorem \ref{thm:4},
which allows us to construct in Section \ref{sec:3} uniform variant of iterated diagonalization, see Theorem \ref{thm:5}.
The convergence of generalized $N$-shifted Tur\'an determinants are proven in Section \ref{sec:4}, see Theorem \ref{thm:1}.
In Section \ref{sec:5}, we describe the truncation method that seems to be of independent interest. In particular,
we show the formula for density of orthonormalizing measure, see Theorem \ref{thm:6}. The next section is devoted 
to prove asymptotic of the orthogonal polynomials, see Theorem \ref{thm:3}. Finally, Section \ref{sec:2} contains 
some applications.

\subsection{Notation}
By $\NN$ we denote the set of positive integers and $\NN_0 = \NN \cup \{0\}$. For any compact set $K$, by $o_K(1)$
we denote the class of functions $f_n : K \rightarrow \RR$ such that $\lim_{n \to \infty} f_n(x) = 0$
uniformly with respect to $x \in K$. Moreover, by $c$ we denote generic positive constants whose value may change
from line to line.

\section{Stolz class}
\label{sec:1}
In this section we define a proper class of slowly oscillating sequences which is motivated by \cite{Stolz1994}.

For a sequence $(x_n : n \in \NN)$ of elements from a normed space $X$ and $j \in \NN_0$, we set
\[
	\begin{aligned}
	\Delta^0 x_n &= x_n \\
	\Delta^j x_n &= \Delta^{j-1} x_{n+1} - \Delta^{j-1} x_n, \qquad \text{for } n \geq 1. \\
	\end{aligned}
\]
We say that a bounded sequence $(x_n)$ belongs to $\calD_{r, s}(X)$ for some $r \in \NN$ and $s \in \{0, \ldots, r-1\}$,
if for each $j \in \{1, \ldots, r-s\}$,
\[
	\sum_{n \in \NN} \big\| \Delta^j x_n \big\|^{\frac{r}{j+s}} < \infty.
\]
Moreover, $(x_n) \in \calD_{r,s }^0(X)$, if $(x_n) \in \calD_{r,s}(X)$ and
\[
	\sum_{n \in \NN} \|x_n\|^{\frac{r}{s}} < \infty.
\]
Let us observe that for $s \in \{0, 1, \ldots, r-2 \}$
\begin{equation} \label{eq:4}
	\Delta^1 f \in \calD_{r, s+1}^0(X) \quad \text{provided} \quad 
	f \in \calD_{r, s}(X).
\end{equation}
Moreover,
\begin{equation} \label{eq:8}
	\calD_{r, s}(X) \subset \calD_{r, 0}(X).
\end{equation}
Indeed, for $j \in \{1, \ldots, r - s\}$, we have
\[
	\big\| \Delta^j x_n \big\|^{\frac{r}{j}} \leq c \big\| \Delta^j x_n \big\|^{\frac{r}{s+j}}.
\]
If $j \in \{r-s+1, \ldots, r\}$, then
\begin{align*}
	\big\| \Delta^j x_n \big\|^{\frac{r}{j}} 
	&\leq 
	c \sum_{k = 0}^{s+j-r} \big\| \Delta^{r-s} x_{n+k} \big\|^{\frac{r}{j}} \\
	&\leq
	c \sum_{k = 0}^{s+j-r} \big\| \Delta^{r-s} x_{n+k} \big\|.
\end{align*}
To simplify the notation, if $X$ is the real line with an Euclidean norm we shortly write
$\calD_{r, s} = \calD_{r, s}(X)$. Given a compact set $K \subset \CC$ and a vector space $R$, by $\calD_{r, s}(K, R)$ we
denote the case when $X$ is the space of all continuous mappings from $K$ to $R$ equipped with a supremum norm.

The following lemma is well-known and its proof is straightforward.
\begin{lemma}
	\label{lem:1}
	For any two sequences $(x_n)$ and $(y_n)$, we set $z_n = x_n y_n$. Then for each $j \in \NN$,
	\[
		\Delta^j z_n = \sum_{k = 0}^j {j \choose k} \Delta^{j-k}x_n \cdot
		\Delta^k y_{n + j - k}.
	\]
\end{lemma}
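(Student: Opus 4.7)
The plan is to prove this discrete Leibniz rule by straightforward induction on $j$, with the base case being the familiar one-step product-difference identity and the inductive step powered by Pascal's recurrence for binomial coefficients.

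For the base case $j=1$, I would add and subtract $x_n y_{n+1}$ to obtain
\[
\Delta z_n = x_{n+1} y_{n+1} - x_n y_n = (x_{n+1} - x_n)\, y_{n+1} + x_n\, (y_{n+1} - y_n) = \Delta x_n \cdot y_{n+1} + x_n \cdot \Delta y_n,
\]
which matches the right-hand side of the lemma: the $k=0$ term gives $\binom{1}{0}\Delta x_n \cdot y_{n+1}$ and the $k=1$ term gives $\binom{1}{1} x_n \cdot \Delta y_n$.

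For the inductive step, assuming the formula at level $j$, I would write $\Delta^{j+1} z_n = \Delta(\Delta^j z_n)$ and apply the $j=1$ identity to each summand $\Delta^{j-k} x_n \cdot \Delta^k y_{n+j-k}$, treating it as a product of two sequences in the index $n$. This produces two families of terms, one in which the $x$-factor gains an extra difference and one in which the $y$-factor does; the shifts conveniently satisfy $n + j - k + 1 = n + (j+1) - k$ in the first family, while the second family naturally wants the substitution $k \mapsto k+1$ to align its indices with the target formula at level $j+1$.

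After reindexing, the middle coefficients combine via the identity $\binom{j}{k} + \binom{j}{k-1} = \binom{j+1}{k}$, and the boundary terms $k=0$ and $k=j+1$ come out with the correct coefficients $\binom{j+1}{0} = \binom{j+1}{j+1} = 1$ automatically. There is no genuine obstacle here; the only thing requiring care is the bookkeeping of index shifts so that the inner $y$-argument in each resulting term is precisely $n + (j+1) - k$ as demanded by the statement.
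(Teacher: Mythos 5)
Your induction is correct: the base case is the exact one-step identity $\Delta(x_ny_n)=\Delta x_n\cdot y_{n+1}+x_n\cdot\Delta y_n$, and in the inductive step the reindexing $k\mapsto k+1$ in the second family of terms together with Pascal's rule $\binom{j}{k}+\binom{j}{k-1}=\binom{j+1}{k}$ yields precisely the claimed formula at level $j+1$, with the shift $n+(j+1)-k$ coming out right in both families. The paper offers no proof of Lemma~\ref{lem:1} (it is stated as well-known and straightforward), and your argument is the standard one the authors implicitly have in mind, so there is nothing to compare beyond noting that your write-up fills in the omitted routine verification.
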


The following shows that $\calD_{r,s}(X)$ is an algebra and $\calD^0_{r,s}(X)$ is its ideal.
\begin{corollary}
	\label{cor:1}
	Let $r \in \NN$ and $s \in \{0, \ldots, r-1\}$. 
	\begin{enumerate}[(i)]
		\item \label{en:1}
		If $(x_n) \in \calD_{r, s}(X)$ and $(y_n) \in \calD_{r, s}(X)$ then $(x_n y_n) \in \calD_{r, s}(X)$. 
	\item \label{en:2}
		If $(x_n) \in \calD_{r, 0}(X)$, and $(y_n) \in \calD^0_{r, s}(X)$ then $(x_n y_n) \in \calD_{r, s}^0(X)$.
	\end{enumerate}
\end{corollary}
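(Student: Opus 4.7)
The proof rests on Lemma~\ref{lem:1}: expanding $\Delta^j(x_n y_n)$ into a sum of $j+1$ products of the form $\Delta^{j-k} x_n \cdot \Delta^k y_{n+j-k}$, one controls each term separately in the $\ell^{r/(j+s)}$-quasi-norm and then assembles them by the triangle inequality. In both (i) and (ii) the strategy is identical; only the matching of H\"older exponents differs.

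For (i), fix $j \in \{1,\ldots,r-s\}$. The boundary terms $k = 0$ and $k = j$ are immediate: one factor is uniformly bounded (namely $y_{n+j}$ or $x_n$) while the $r/(j+s)$-th power of the other is summable by the $\calD_{r,s}(X)$-hypothesis. For an interior index $1 \le k \le j-1$, the hypothesis furnishes $\|\Delta^{j-k} x_n\| \in \ell^{r/(j-k+s)}$ and $\|\Delta^k y_{n+j-k}\| \in \ell^{r/(k+s)}$. Young's inequality $a^\alpha b^\beta \le \alpha a + \beta b$ with weights $\alpha = (j-k+s)/(j+2s)$ and $\beta = (k+s)/(j+2s)$ then places the product in $\ell^{r/(j+2s)}$; since both difference sequences are null (being $\ell^p$ with $p \ge 1$), the set-theoretic embedding $\ell^{r/(j+2s)} \subset \ell^{r/(j+s)}$, valid because $s \ge 0$, completes the case.

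For (ii), the extra requirement $\sum \|x_n y_n\|^{r/s} < \infty$ follows at once from boundedness of $(x_n)$ combined with $(y_n) \in \ell^{r/s}$, which is granted by $(y_n) \in \calD^0_{r,s}(X)$. For the differences, the key observation is that the H\"older exponents now match exactly: the contribution $(j-k)/r$ from $\Delta^{j-k} x_n \in \ell^{r/(j-k)}$ (available because $(x_n) \in \calD_{r,0}(X)$) and $(k+s)/r$ from $\Delta^k y_{n+j-k} \in \ell^{r/(k+s)}$ add to precisely $(j+s)/r$, so H\"older places the product directly in $\ell^{r/(j+s)}$ with no embedding needed; the boundary cases fit the same arithmetic (at $k=0$ the factor $y_{n+j} \in \ell^{r/s}$ contributes $s/r$, and at $k=j$ the bounded $x_n$ contributes $0$). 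The entire argument is essentially bookkeeping of H\"older exponents; the only mildly subtle step is in (i), where the sum of hypothesis exponents exceeds the target by $s/r$, forcing the use of the $\ell^p$-embedding for null sequences.
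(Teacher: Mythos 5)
Your proof is correct and follows essentially the same route as the paper: the Leibniz-type formula of Lemma~\ref{lem:1}, boundedness of one factor for the edge terms $k=0$ and $k=j$, and a H\"older-type estimate for the interior terms. The only (harmless) deviation is in part (i), where the paper invokes the inclusion $\calD_{r,s}(X)\subset\calD_{r,0}(X)$ so that the H\"older exponents match the target $\ell^{r/(s+j)}$ exactly, whereas you keep both $s$-shifted exponents, land in $\ell^{r/(j+2s)}$ via Young's inequality, and finish with the embedding $\ell^{r/(j+2s)}\subset\ell^{r/(j+s)}$.
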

\begin{proof}
	In view of Lemma \ref{lem:1}, it is enough to estimate
	\[
		\sum_{n \in \NN} 
		\Big\| \Delta^{j-k}x_n \cdot \Delta^ky_{n+j-k} \Big\|^{\frac{r}{s+j}}
	\]
	for any $j \in \{1, \ldots, r-s\}$ and $k \in \{0, \ldots, j\}$. If $k = 0$, then
	\[
		\sum_{n \in \NN} \big\| \Delta^j x_n \cdot y_{n+j} \big\|^{\frac{r}{s + j}} 
		\leq
		\Big(
		\sup_{n \in \NN} \big\|y_n\big\|^{\frac{r}{s+j}} \Big)
		\sum_{n \in \NN} \big\| \Delta^j x_n \big\|^{\frac{r}{s+j}}.
	\]
	Similarly, for $k = j$ we have
	\[
		\sum_{n \in \NN} \big\| x_n \cdot \Delta^j y_n \big\|^{\frac{r}{s + j}} 
		\leq
		\Big(
		\sup_{n \in \NN} \big\|x_n\big\|^{\frac{r}{s+j}} \Big)
		\sum_{n \in \NN} \big\| \Delta^j y_n \big\|^{\frac{r}{s+j}}.
	\]
	Finally, if $k \in \{1, \ldots, j-1\}$ then by H\"older's inequality
	\[
		\sum_{n \in \NN} \Big\| \Delta^{j-k} x_n \cdot \Delta^k y_{n+j-k} \Big\|^{\frac{r}{s+j}}
		\leq
		\bigg(\sum_{n \in \NN} \Big\| \Delta^{j-k} x_n \Big\|^{\frac{r}{j-k}} \bigg)^{\frac{j-k}{s+j}}
		\bigg(\sum_{n \in \NN} \Big\| \Delta^k y_{n+j-k} \Big\|^{\frac{r}{s+k}}\bigg)^{\frac{s+k}{s+j}}.
	\]
	Since $\calD_{r, s}(X) \subset \calD_{r, 0}(X)$, we conclude the proof of \eqref{en:1}. The reasoning for 
	\eqref{en:2} is similar.
\end{proof}

The following result shows that $\calD_{r, s}(K, \RR)$ is closed under the composition with smooth
maps.
\begin{lemma}
	\label{lem:2}
	Fix $r \in \NN$, $s \in \{0, \ldots, r-1\}$ and a compact set $K \subset \RR$. Let 
	$(f_n : n \in \NN) \in \calD_{r, s}(K, \RR)$ be a sequence of real functions on $K$ with values in $I \subseteq \RR$
	and let $F \in \calC^{r-s}(I, \RR)$. Then $(F \circ f_n : n \in \NN) \in \calD_{r, s}(K, \RR)$.
\end{lemma}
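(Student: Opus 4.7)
The plan is to reduce the claim, for each $j \in \{1, \ldots, r-s\}$, to a structural bound of the form
\[
	\|\Delta^j(F \circ f_n)\| \leq C_j \sum_{(J_1, \ldots, J_m)} \prod_{i=1}^m \|\Delta^{J_i} f_{n + \tau_i}\|,
\]
where the outer sum runs over a finite family of tuples of positive integers satisfying $J_1 + \cdots + J_m = j$, the shifts $\tau_i$ lie in $\{0, 1, \ldots, j\}$, and $C_j$ depends only on $F$ and $\sup_n \|f_n\|$. This is a discrete analogue of Fa\`a di Bruno's formula. Once such a bound is in hand, an application of H\"older's inequality in the spirit of Corollary~\ref{cor:1} will yield summability of $\|\Delta^j(F \circ f_n)\|^{r/(s+j)}$.

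To prove the structural bound, I would proceed by induction on $j$. The case $j = 1$ is the pointwise mean value theorem: $\|\Delta(F \circ f_n)\| \leq \|F'\|_\infty \|\Delta f_n\|$. For the inductive step, I would write $\Delta^j(F \circ f)_n = \Delta\bigl[\Delta^{j-1}(F \circ f)\bigr]_n$ and apply Lemma~\ref{lem:1}. The inductive hypothesis writes $\Delta^{j-1}(F \circ f)_n$ as a sum of products of \emph{smooth factors} of the form $F^{(k)}(f_{n+\sigma})$ and \emph{difference factors} of the form $\Delta^{J_i} f_{n + \tau_i}$ with $\sum_i J_i = j - 1$. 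Applying $\Delta$ produces two kinds of contributions: either $\Delta$ falls on a difference factor, raising one $J_i$ by one and preserving $\sum_i J_i = j$; or $\Delta$ falls on a smooth factor $F^{(k)}(f_{n+\sigma})$, in which case the mean value theorem bounds $\|\Delta F^{(k)}(f_{n+\sigma})\| \leq \|F^{(k+1)}\|_\infty \|\Delta f_{n + \sigma}\|$, producing an extra difference factor and again raising the total order to $j$. Since $F \in \calC^{r-s}$ and the range of the uniformly bounded sequence $(f_n)$ is contained in a fixed compact subset of $I$, the smooth factors stay uniformly bounded and only derivatives of $F$ up to order $j \leq r-s$ are needed.

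Once the bound is established, I would fix a tuple $(J_1, \ldots, J_m)$ and apply H\"older's inequality with exponents
\[
	p_1 = \frac{s+j}{s+J_1}, \qquad p_i = \frac{s+j}{J_i} \ \text{for } i \geq 2.
\]
These satisfy $\sum_i p_i^{-1} = (s + J_1 + J_2 + \cdots + J_m)/(s+j) = 1$, and yield
\[
	\sum_n \prod_{i=1}^m \|\Delta^{J_i} f_{n+\tau_i}\|^{r/(s+j)} \leq
	\Bigl( \sum_n \|\Delta^{J_1} f_n\|^{r/(s+J_1)} \Bigr)^{\!\frac{s+J_1}{s+j}}
	\prod_{i \geq 2} \Bigl( \sum_n \|\Delta^{J_i} f_n\|^{r/J_i} \Bigr)^{\!\frac{J_i}{s+j}}.
\]
The first factor is finite because $f \in \calD_{r,s}(K, \RR)$ and $J_1 \leq j \leq r-s$, while the factors with $i \geq 2$ are finite since $f \in \calD_{r,s}(K, \RR) \subset \calD_{r,0}(K, \RR)$ by \eqref{eq:8} and $J_i \leq r$. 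Summing over the finite family of tuples concludes the argument.

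The hard part will be the combinatorial bookkeeping behind the Fa\`a di Bruno-type bound: although the structural claim is intuitively clear, the induction must carefully track the interplay between smooth and difference factors under Lemma~\ref{lem:1}, in particular ensuring that the constants $C_j$ remain under control. An alternative approach avoiding the explicit expansion would be to induct on the regularity $r-s$ via the integral identity $\Delta(F \circ f)_n = \Delta f_n \int_0^1 F'(f_n + t \Delta f_n)\, dt$, but this produces bivariate compositions and hence would require a simultaneous bivariate version of the lemma.
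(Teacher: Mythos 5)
Your overall architecture (a Fa\`a di Bruno--type structural bound followed by H\"older's inequality) is the right one, and your H\"older step is correct and in fact slightly cleaner than the paper's bookkeeping: putting the full weight $s$ on one factor, via the exponents $p_1=\tfrac{s+j}{s+J_1}$, $p_i=\tfrac{s+j}{J_i}$, does yield finiteness from $(f_n)\in\calD_{r,s}$ together with the inclusion $\calD_{r,s}\subset\calD_{r,0}$ of \eqref{eq:8}. The genuine gap is in the induction that is supposed to produce the structural bound. Your inductive hypothesis is stated as a \emph{representation} of $\Delta^{j-1}(F\circ f)_n$ as a sum of products of smooth factors $F^{(k)}(f_{n+\sigma})$ and difference factors $\Delta^{J_i}f_{n+\tau_i}$ --- but no such identity holds (already $\Delta^1(F\circ f)_n\neq F'(f_{n+\sigma})\Delta f_n$ in general; the mean value theorem gives only an inequality, or an identity with an intermediate point or an integral average $\int_0^1 F'(f_n+t\Delta f_n)\,dt$ that is \emph{not} of the form $F^{(k)}(f_{n+\sigma})$). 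Since $\Delta$ cannot be applied to an inequality, the step ``apply $\Delta$ via Lemma~\ref{lem:1} to the inductive hypothesis'' is not available: once you use the MVT at stage $j-1$ you have lost the exact structure you need to difference at stage $j$. To make the induction run you must carry \emph{exact} representations whose smooth factors are allowed to be divided differences or iterated integral averages of $F^{(m)}$, and you then need an identity describing how $\Delta$ acts on such a factor. That identity is precisely the technical heart of the paper's proof: the terms are kept in the form $\int_{A_{1,n}}^{B_{1,n}}\cdots\int_{A_{m,n}}^{B_{m,n}}F^{(m)}(t_1+\cdots+t_m)\,dt_1\cdots dt_m$ with $A_j,B_j\in\calD_{r,s}$ and $B_j-A_j\in\calD^0_{r,s_j}$, $\sum_j s_j=s+k$, and the difference of two such terms is re-expanded exactly through the identity
\[
	\bigg(\int_{A_{n+1}}^{B_{n+1}}-\int_{A_n}^{B_n}\bigg)G(t)\,{\rm d}t
	=\int_{A_{n+1}}^{B_{n+1}}\int_{A_n-A_{n+1}}^{0}G'(s+t)\,{\rm d}s\,{\rm d}t
	+\int_{B_n}^{B_{n+1}+A_n-A_{n+1}}G(t)\,{\rm d}t,
\]
which either raises the order of the derivative of $F$ while creating a new short integration interval, or creates an interval of length in a higher Stolz ideal. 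Your closing remark dismisses this ``integral identity'' route because it produces bivariate compositions, but in fact it is your chosen route that cannot be closed as stated, while the integral (or divided-difference) formulation is exactly what repairs it; alternatively you could invoke a known divided-difference chain rule, but that would have to be proved or cited, not asserted. A smaller shared caveat: both your argument and the paper's need $F^{(m)}$ bounded on the set where it is evaluated, which requires the closure of the union of the ranges of the $f_n$ to sit inside $I$ (true in all applications in the paper, but worth stating rather than asserting as automatic).
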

\begin{proof}
	We first show that for each $k \in \{1, \ldots, r- s\}$, $\Delta^k (F \circ f_n)$ is a finite linear combination of
	terms of a form
	\[
        \int_{A_{1, n}}^{B_{1, n}} \cdots \int_{A_{m, n}}^{B_{m, n}}
        F^{(m)}(t_1 + \cdots + t_m) {\: \rm d}t_1\cdots {\: \rm d}t_m
    \]
	with $1 \leq m \leq k$, where for each $j \in \{1, \ldots, m\}$, $A_j = (A_{j, n} : n \in \NN)$ and 
	$B_j = (B_{j, n} : n \in \NN)$ are functions on $K$ such that
    \[
		A_j, B_j \in \calD_{r, s}(K, \RR), \qquad\text{and}\qquad
		B_j - A_j \in \calD_{r, s_j}^0(K, \RR),
	\]
	with
    \[
        \sum_{j = 1}^m s_j = s + k.
    \]
	The reasoning is by induction over $k \in \{1, \ldots, r-s\}$. For $k = 1$, we have
	\[
		\Delta^1 (F \circ f_n)(x) = F \big(f_{n+1}(x)\big) - F \big(f_n(x) \big) = 
		\int^{f_{n+1}(x)}_{f_n(x)} F'(t) {\: \rm d}t
	\]
	where by \eqref{eq:4}
	\[
		(f_{n+1} - f_n : n \in \NN) \in \calD_{r, s+1}^0(K, \RR).
	\]
	Since
	\begin{equation}
		\label{eq:16}
		\Delta^{k+1} (F \circ f_n) = \Delta^k (F \circ f_{n+1}) - \Delta^k (F \circ f_n),
	\end{equation}
	by the inductive hypothesis, the right-hand side of \eqref{eq:16} is a linear combination of terms having a form
	\[
		\bigg(
		\int_{A_{1, n+1}}^{B_{1, n+1}} \cdots \int_{A_{m, n+1}}^{B_{m, n+1}}
		-
		\int_{A_{1, n}}^{B_{1, n}} \cdots \int_{A_{m, n}}^{B_{m, n}}
		\bigg)
        F^{(m)}(t_1 + \cdots + t_m) {\: \rm d}t_1\cdots {\: \rm d}t_m.
	\]
	We write
	\begin{align*}
		&
		\int_{A_{1, n+1}}^{B_{1, n+1}} \cdots \int_{A_{m, n+1}}^{B_{m, n+1}}
        -
        \int_{A_{1, n}}^{B_{1, n}} \cdots \int_{A_{m, n}}^{B_{m, n}} \\
		&\qquad\qquad=
		\sum_{j = 1}^m
		\int_{A_{1, n+1}}^{B_{1, n+1}} \cdots \int_{A_{j-1, n+1}}^{B_{j-1, n+1}}
		\bigg(
		\int_{A_{j, n+1}}^{B_{j, n+1}} - \int_{A_{j, n}}^{B_{j, n}}
		\bigg) 
		\int_{A_{j+1, n}}^{B_{j+1, n}}
		\cdots
		\int_{A_{m, n}}^{B_{m, n}}.
	\end{align*}
	Notice that
	\[
		\bigg(
		\int_{A_{n+1}}^{B_{n+1}} - \int_{A_n}^{B_n}
		\bigg)
		G(t) {\: \rm d} t
		=
		\int_{A_{n+1}}^{B_{n+1}} \int_{A_n - A_{n+1}}^0 G'(s + t) {\: \rm d} s {\: \rm d} t
		+
		\int_{B_n}^{B_{n+1} + A_n - A_{n+1}} G(t) {\: \rm d} t.
	\]
	Moreover, if $(A_n : n \in \NN), (B_n : n \in \NN) \in \calD_{r, s}(K, \RR)$, and 
	$(B_n - A_n : n \in \NN) \in \calD_{r, s'}^0(K, \RR)$, then by \eqref{eq:4} and \eqref{eq:8}, 
	\[
		(A_{n+1} - A_n : n \in \NN) \in \calD_{r, s+1}^0(K, \RR) \subset \calD_{r, 1}^0(K, \RR), 
	\]
	and 
	\[
		(B_{n+1} - A_{n+1} - B_n + A_n : n \in \NN) \in \calD_{r, s'+1}^0(K, \RR).
	\]
	Therefore, 
	\begin{align*}
		&\int_{A_{1, n+1}}^{B_{1, n+1}} \cdots \int_{A_{j-1, n+1}}^{B_{j-1, n+1}}
        \bigg(
        \int_{A_{j, n+1}}^{B_{j, n+1}} - \int_{A_{j, n}}^{B_{j, n}}
        \bigg)
        \int_{A_{j+1, n}}^{B_{j+1, n}}
        \cdots
        \int_{A_{m, n}}^{B_{m, n}}
        F^{(m)}(t_1+\cdots+t_m)
        {\: \rm d}t_1\cdots {\: \rm d}t_m\\
		&\quad
		=
		\int_{A_{1, n+1}}^{B_{1, n+1}} \cdots 
        \int_{A_{j, n+1}}^{B_{j, n+1}}
        \int_{A_{j+1, n}}^{B_{j+1, n}}
        \cdots
        \int_{A_{m, n}}^{B_{m, n}}
		\int_{A_{j, n} - A_{j, n+1}}^0
        F^{(m+1)}(t_1+\cdots+t_{m+1}) 
        {\: \rm d}t_1\cdots{\: \rm d}t_{m+1} \\
		&\quad\phantom{=}+
		\int_{A_{1, n+1}}^{B_{1, n+1}} \cdots 
		\int_{A_{j-1, n+1}}^{B_{j-1, n+1}}
        \int_{B_{j, n}}^{B_{j, n+1}+A_{j, n} - A_{j, n+1}} 
        \int_{A_{j+1, n}}^{B_{j+1, n}}
        \cdots
        \int_{A_{m, n}}^{B_{m, n}}
        F^{(m)}(t_1+\cdots+t_m)
        {\: \rm d}t_1\cdots {\: \rm d}t_m,
	\end{align*}
	proving the claim.

	Next, since $F \in \calC^{r-s}(I, \RR)$, each term of a form
	\[
		\int_{A_{1, n}}^{B_{1, n}} \cdots \int_{A_{m, n}}^{B_{m, n}}
        F^{(m)}(t_1 + \cdots + t_m) {\: \rm d}t_1\cdots {\: \rm d}t_m,
	\]
	for $1 \leq m \leq k \leq r-s$, is bounded by a constant multiple of
	\[
		\prod_{j=1}^m \abs{B_{j, n} - A_{j, n}}.
	\]
	Because $(B_{j, n} - A_{j, n} : n \in \NN) \in \calD_{r, s_j}^0(K, \RR)$ with
	\[
		\sum_{j=1}^m s_j = s + k,
	\]
	by H\"older's inequality, we obtain
	\begin{align*}
		&\sum_{n \in \NN} \sup_K \bigg|
        \int_{A_{1, n}}^{B_{1, n}} \cdots \int_{A_{m, n}}^{B_{m, n}}
        F^{(m)}(t_1 + \cdots + t_m) {\: \rm d}t_1\cdots {\: \rm d}t_m
        \bigg|^{\frac{r}{s+k}}\\
		&\qquad\qquad\leq
		c^{\frac{r}{s+k}}
		\sum_{n \in \NN}
		\prod_{j = 1}^m
		\sup_K \big|B_{j, n} - A_{j, n}\big|^{\frac{r}{s+k}} \\
		&\qquad\qquad\leq
		c^{\frac{r}{s+k}}
		\prod_{j = 1}^m
		\bigg(
		\sum_{n \in \NN} \sup_K \big|B_{j, n} - A_{j, n} \big|^\frac{r}{s_j}
		\bigg)^{\frac{s_j}{s+k}} < \infty.
	\end{align*}
	In view of the claim, we conclude that
	\[
		\sum_{n \in \NN} \sup_K \big|\Delta^k (F \circ f_n) \big|^{\frac{r}{s+k}} < \infty,
	\]
	for each $1 \leq k \leq r-s$, thus $(F \circ f_n ) \in \calD_{r, s}(K, \RR)$.
\end{proof}
\begin{corollary}
	\label{cor:2}
	Fix $r \in \NN$, $s \in \{0, \ldots, r-1\}$ and a compact set $K \subset \RR$. If $(x_n : n \in \NN) \in 
	\calD_{r, s}(K, \CC)$, and 
	\[
		\inf_{n \in \NN} \inf_{x \in K} \abs{x_n(x)} > 0,
	\]
	then $(x_n^{-1} : n \in \NN) \in \calD_{r, s}(K, \CC)$.
\end{corollary}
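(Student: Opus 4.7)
The plan is to reduce the complex-valued reciprocal $x_n^{-1}$ to the real case already handled by Lemma~\ref{lem:2}, via the identity
\[
	x_n^{-1} = \frac{\overline{x_n}}{\abs{x_n}^2}.
\]
The point is that $\abs{x_n}^2$ is a real-valued sequence bounded away from zero and from above, so Lemma~\ref{lem:2} applies to it with the smooth function $t \mapsto 1/t$, while $\overline{x_n}$ is handled by the fact that conjugation commutes with differencing.

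First I would observe that $\Delta^j \overline{x_n} = \overline{\Delta^j x_n}$, so $\abs{\Delta^j \overline{x_n}} = \abs{\Delta^j x_n}$, which gives $(\overline{x_n} : n \in \NN) \in \calD_{r, s}(K, \CC)$ immediately. By Corollary~\ref{cor:1}\,\eqref{en:1}, applied with the normed space $\CC$, the product
\[
	y_n := x_n \overline{x_n} = \abs{x_n}^2
\]
then belongs to $\calD_{r, s}(K, \CC)$; since $y_n$ is real-valued, $(y_n : n \in \NN) \in \calD_{r, s}(K, \RR)$. Boundedness of $(x_n)$, together with the hypothesis $\inf_n \inf_K \abs{x_n} > 0$, ensures that the sequence $(y_n)$ takes values in a compact interval $I \subset (0, \infty)$.

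Next I would apply Lemma~\ref{lem:2} to $F(t) = 1/t$, which is of class $\calC^{\infty}$ on any compact subinterval of $(0, \infty)$, and to $(y_n)$. This yields
\[
	\big( \abs{x_n}^{-2} : n \in \NN \big) \in \calD_{r, s}(K, \RR) \subset \calD_{r, s}(K, \CC).
\]
Finally, one more application of Corollary~\ref{cor:1}\,\eqref{en:1} to the product $\overline{x_n} \cdot \abs{x_n}^{-2} = x_n^{-1}$ finishes the proof.

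There is no real obstacle here; the argument is a straightforward combination of Lemma~\ref{lem:2} with the algebra property from Corollary~\ref{cor:1}. The only point requiring a brief verification is the passage from real to complex coefficients in Corollary~\ref{cor:1}, which is harmless because its proof uses only the norm structure and the triangle/H\"older inequalities, all of which carry over verbatim to $X = \CC$.
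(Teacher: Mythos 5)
Your argument is correct: writing $x_n^{-1} = \overline{x_n}/\abs{x_n}^2$, using that conjugation commutes with $\Delta^j$, applying Corollary~\ref{cor:1}(i) to get $\abs{x_n}^2 \in \calD_{r,s}(K,\RR)$ with values in a compact subinterval of $(0,\infty)$, and then composing with $F(t)=1/t$ via Lemma~\ref{lem:2} is exactly the reduction the paper intends, since Corollary~\ref{cor:2} is stated without proof as an immediate consequence of Lemma~\ref{lem:2} and the algebra property. Nothing essential is missing.
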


Our main tool will be the following theorem based on \cite[Theorem 4]{Stolz1994}. 
\begin{theorem}
	\label{thm:4}
	Fix two integers $r > 2$, $s \in \{0, \ldots, r-2\}$ and a compact set $K \subset \RR$. Suppose that
	$(\lambda_n : n \in \NN)$ is a uniform Cauchy sequence belonging to $\calD_{r, s}(K, \CC)$ and such that for all
	$x \in K$ and $n \in \NN$,
	\[
		\Im \lambda_n(x) \geq \delta > 0.
	\]
	Let $(X_n : n \in \NN) \in \calD_{r, s}\big(K, \GL(2, \CC)\big)$ be such that
	\begin{equation}
		\label{eq:2}
		\sup_{x \in K} \sup_{n \in \NN} \big(\|X_n(x)\| + \|X_n^{-1}(x)\|\big) < \infty,
	\end{equation}
	and
	\begin{equation}
		\label{eq:3}
		\begin{pmatrix}
			0 & 1\\
			1 & 0
		\end{pmatrix}
		X_n
		\begin{pmatrix}
			0 & 1 \\
			1 & 0
		\end{pmatrix}
		=
		\overline{X_n}
		\qquad \text{or} \qquad
		X_n
		\begin{pmatrix}
			0 & 1\\
			1 & 0
		\end{pmatrix}
		=
		\overline{X_n}.
	\end{equation}
	Then there are sequences $(\gamma_n : n \in \NN) \in \calD_{r, s+1}(K, \CC)$ and $(Y_n : n \in \NN) \in 
	\calD_{r, s+1}\big(K, \GL(2, \CC)\big)$ satisfying
	\[
		\begin{pmatrix}
		\lambda_n & 0 \\
		0 & \overline{\lambda_n}
		\end{pmatrix}
		X_n^{-1} X_{n-1} = 
		Y_n 
		\begin{pmatrix}
		\gamma_n & 0 \\
		0 & \overline{\gamma_n}
		\end{pmatrix}
		Y_n^{-1},
	\]
	where $(\gamma_n : n \in \NN)$ is a uniform Cauchy sequence such that
	\begin{equation} \label{eq:1}
		\lim_{n \to \infty} \sup_{x \in K} \big|\gamma_n(x) - \lambda_n(x) \big| = 0.
	\end{equation}
	Moreover,
	\begin{equation}
		\label{eq:5}
		\lim_{n \to \infty} \sup_{x \in K} \big\| Y_n(x) - \Id \big\| = 0,
	\end{equation}
	and
	\[
		\begin{pmatrix}
		0 & 1 \\
		1 & 0
		\end{pmatrix}
		Y_n
		\begin{pmatrix}
		0 & 1 \\
		1 & 0
		\end{pmatrix}
		=
		\overline{Y_n}.
	\]
\end{theorem}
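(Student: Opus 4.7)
The construction is the standard one for diagonalizing a matrix that is a small and suitably regular perturbation of a diagonal matrix with simple spectrum. First I would observe that the deviation $M_n := X_n^{-1} X_{n-1}$ from the identity sits in a strictly better Stolz class than either factor: writing $M_n - \Id = -X_n^{-1}\, \Delta^1 X_{n-1}$, the property \eqref{eq:4}, the matrix analogue of Corollary \ref{cor:2} (obtained entrywise via Cramer's rule), and Corollary \ref{cor:1}(ii) together force $M_n - \Id \in \calD_{r, s+1}^0(K, \Mat(2, \CC))$, so in particular $\|M_n - \Id\| = o_K(1)$.

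Setting $T_n = \diag(\lambda_n, \bar\lambda_n) M_n$, either variant of \eqref{eq:3} gives $J M_n J = \overline{M_n}$ with $J = \begin{pmatrix} 0 & 1 \\ 1 & 0 \end{pmatrix}$, whence $J T_n J = \overline{T_n}$ and so $\tr T_n, \det T_n \in \RR$. Expanding $T_n = \diag(\lambda_n, \bar\lambda_n) + \diag(\lambda_n, \bar\lambda_n)(M_n - \Id)$ and invoking Corollary \ref{cor:1}(ii) gives $\tr T_n - 2\Re\lambda_n$ and $\det T_n - |\lambda_n|^2$ in $\calD_{r, s+1}^0(K, \RR)$, and hence $\discr T_n + 4(\Im \lambda_n)^2 \in \calD_{r, s+1}^0(K, \RR)$. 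Since $\Im \lambda_n \geq \delta$, the quantity $-\discr T_n$ is uniformly bounded below for large $n$, so Lemma \ref{lem:2} applied to $t \mapsto \sqrt{t}$ places $\sqrt{-\discr T_n}$ in the appropriate Stolz class. I would then define
\[
	\gamma_n = \tfrac{1}{2}\bigl(\tr T_n + i\sqrt{-\discr T_n}\bigr),
\]
extending arbitrarily (e.g.\ $\gamma_n := \lambda_n$) for the finitely many $n$ where this is not well-defined. The rationalization
\[
	\sqrt{-\discr T_n} - 2 \Im \lambda_n = \frac{-\discr T_n - 4 (\Im \lambda_n)^2}{\sqrt{-\discr T_n} + 2 \Im \lambda_n},
\]
together with Corollaries \ref{cor:1}(ii) and \ref{cor:2}, then yields $\gamma_n - \lambda_n \in \calD_{r, s+1}^0(K, \CC)$, which both gives \eqref{eq:1} and transfers the Cauchy property from $(\lambda_n)$ to $(\gamma_n)$.

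For the diagonalizing matrix, I would take the eigenvector of $T_n$ for $\gamma_n$ normalized so its first component equals $1$. Reading the second row of $(T_n - \gamma_n \Id) v_n = 0$ yields
\[
	v_n = \begin{pmatrix} 1 \\ w_n \end{pmatrix},
	\qquad
	w_n = -\frac{(T_n)_{21}}{(T_n)_{22} - \gamma_n},
\]
where $(T_n)_{21} = \bar\lambda_n (M_n - \Id)_{21} \in \calD_{r, s+1}^0$, and $(T_n)_{22} - \gamma_n$ has limit $\bar\lambda - \lambda \neq 0$ with the required regularity; Corollaries \ref{cor:1}(ii) and \ref{cor:2} then give $w_n \in \calD_{r, s+1}^0(K, \CC)$. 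Since $J T_n J = \overline{T_n}$ forces $J \bar v_n$ to be an eigenvector of $T_n$ for $\bar\gamma_n$, I would set
\[
	Y_n = \bigl(v_n \mid J \bar v_n\bigr) = \begin{pmatrix} 1 & \bar w_n \\ w_n & 1 \end{pmatrix},
\]
which by construction satisfies $J Y_n J = \overline{Y_n}$, belongs to the right class, and obeys $\|Y_n - \Id\| = |w_n| \to 0$, giving \eqref{eq:5}. For large $n$, $Y_n$ is invertible and takes values in $\GL(2, \CC)$; redefining $Y_n = \Id$ on the finite complement does not affect the Stolz-class properties. The factorization $T_n Y_n = Y_n \diag(\gamma_n, \bar\gamma_n)$ is then immediate from the eigenvector construction.

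The principal obstacle throughout is the careful bookkeeping of the one-step gain from $\calD_{r, s}$ to $\calD_{r, s+1}^0$ in every residual quantity. Every application of Corollary \ref{cor:1}(ii) must pair a $\calD_{r, s+1}^0$ factor with a $\calD_{r, 0}$ factor, the square-root step must be routed through Lemma \ref{lem:2} so that smoothness of the radicand propagates with the correct exponent, and the scalar/matrix inversions must all be uniform in $x \in K$. These checks are routine in spirit, but they are precisely what turns the elementary eigenvalue construction into the quantitative statement of the theorem.
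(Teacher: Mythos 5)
Your proposal is correct and follows essentially the same route as the paper's proof: you form the same matrix $\diag(\lambda_n,\overline{\lambda_n})X_n^{-1}X_{n-1}$, use the same symmetry \eqref{eq:3} to make its trace and determinant real, take the same eigenvalue $\gamma_n=\tfrac12\big(\tr T_n+i\sqrt{-\discr T_n}\big)$, and build the same diagonalizer $Y_n$ with unit diagonal and conjugate off-diagonal entries $w_n,\overline{w_n}$, with the Stolz-class bookkeeping run through \eqref{eq:4}, Corollaries \ref{cor:1}--\ref{cor:2} and Lemma \ref{lem:2} exactly as the paper does. Your only deviations are cosmetic and harmless: you track all of $X_n^{-1}X_{n-1}-\Id$ in $\calD^0_{r,s+1}$ rather than just the off-diagonal entries, obtain the eigenvector from the second row instead of the quadratic formula, and control $\sqrt{-\discr T_n}-2\Im\lambda_n$ by rationalization rather than only by composition with a smooth function (which in fact gives a slightly sharper statement than the paper's own step for $\gamma_n$).
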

\begin{proof}
	Let
	\[
		D_n = 
		\begin{pmatrix}
		\lambda_n & 0 \\
		0 & \overline{\lambda_n}
		\end{pmatrix}.
	\]
	We set
	\[
		W_n = D_n X_n^{-1} X_{n-1} = D_n \big(\Id - X_n^{-1} \Delta X_{n-1}\big).
	\]
	By \eqref{eq:2}, we have
	\[
		\sup_{K} \big\|W_n - D_n \big\| = \sup_{K} \big\|D_n X_n^{-1} \Delta X_{n-1} \big\| 
		\leq 
		c
		\sup_K \big\| \Delta X_{n-1} \big\|.
	\]
	Since $(X_n) \in \calD_{r, s}\big(K, \GL(2, \CC)\big)$,
	\[
		\lim_{n \to \infty} \sup_K \| \Delta X_n \|= 0,
	\]
	thus
	\[
		\lim_{n \to \infty} \sup_K \big\| W_n - D_n \big\|= 0.
	\]
	By \eqref{eq:3}, 
	\begin{equation} \label{eq:13}
        \begin{pmatrix}
        0 & 1 \\
        1 & 0
        \end{pmatrix}
        W_n
        \begin{pmatrix}
        0 & 1 \\
        1 & 0
        \end{pmatrix}
        =
        \overline{D_n}
        \begin{pmatrix}
        0 & 1 \\
        1 & 0
        \end{pmatrix}
        X_n^{-1}
        X_{n-1}
        \begin{pmatrix}
        0 & 1 \\
        1 & 0
        \end{pmatrix}
        =
        \overline{W_n}.
	\end{equation}
    In particular,
    \begin{equation} \label{eq:6}
    	\tr W_n = \overline{\tr W_n} \quad \text{and} \quad 
    	\det W_n = \overline{\det W_n}.
    \end{equation}
	Consequently, $W_n$ has eigenvalues $\gamma_n$ and $\overline{\gamma_n}$ such that
	\[
		\lim_{n \to \infty} \sup_K \big|\gamma_n - \lambda_n \big| = 0,
	\]
	and hence $(\gamma_n : n \in \NN)$ is a uniform Cauchy sequence satisfying \eqref{eq:1}. Setting
	\begin{equation} \label{eq:9}
		X_n = \begin{pmatrix}
		x_{11}^{(n)} & x_{12}^{(n)} \\
		x_{21}^{(n)} & x_{22}^{(n)}
		\end{pmatrix},
		\qquad\text{and}\qquad
		W_n =
		\begin{pmatrix}
		w_{11}^{(n)} & w_{12}^{(n)} \\
		w_{21}^{(n)} & w_{22}^{(n)}
		\end{pmatrix}, 
	\end{equation}
	we obtain
	\[
		W_n
		=
		\frac{1}{\det X_n}
		\begin{pmatrix}
		\lambda_n \big(x_{11}^{(n-1)} x_{22}^{(n)} - x_{21}^{(n-1)} x_{12}^{(n)}\big) &
		\lambda_n \big(x_{12}^{(n-1)} x_{22}^{(n)} - x_{22}^{(n-1)} x_{12}^{(n)}\big) \\
		\overline{\lambda_n} \big( x_{21}^{(n-1)} x_{11}^{(n)} - x_{11}^{(n-1)} x_{21}^{(n)}\big) & 
		\overline{\lambda_n} \big( x_{22}^{(n-1)} x_{11}^{(n)} - x_{12}^{(n-1)} x_{21}^{(n)}\big)
		\end{pmatrix}.
	\]
	By \eqref{eq:2}, \eqref{eq:6} and Corollaries \ref{cor:1}(i) and \ref{cor:2}, we have
	\[
		\bigg(\frac{1}{\det X_n} \bigg) \in \calD_{r, s}(K, \RR).
	\]
	Therefore, by Corollary \ref{cor:1}(i) we get
	\[
		(W_n : n \in \NN) \in \calD_{r, s}\big(K, \GL(2, \CC) \big).
	\]
	Next, we write
	\begin{align*}
		w_{12}^{(n)} 
		&= 
		\frac{\lambda_n}{\det X_n} \big(x_{12}^{(n-1)} x_{22}^{(n)} - x_{22}^{(n-1)} x_{12}^{(n)}\big) \\
		&=
		\frac{\lambda_n}{\det X_n} \Big(\big(x_{22}^{(n)} - x_{22}^{(n-1)}\big) x_{12}^{(n)} 
		- \big(x_{12}^{(n)}-x_{12}^{(n-1)}\big) x_{22}^{(n)}\Big),
	\end{align*}
	thus, by \eqref{eq:4}, \eqref{eq:8} and Corollary \ref{cor:1}(ii),
	\[
		\big(w_{12}^{(n)} : n \in \NN \big), \big(w_{21}^{(n)} : n \in \NN \big)
		\in \calD_{r, s+1}^0(K, \CC).
	\]
	By \eqref{eq:6} and \eqref{eq:9}, we have
	\[
		\gamma_n = \frac{w_{11}^{(n)}+w_{22}^{(n)}}{2} 
		+ \frac{i}{2} \sqrt{\big|\big(w_{11}^{(n)}-w_{22}^{(n)}\big)^2 + 4 w_{12}^{(n)} w_{21}^{(n)}\big|},
	\]
	and since for all $n$ sufficiently large
	\[
		\big| w_{11}^{(n)}-w_{22}^{(n)} \big| \geq 2 \Im \lambda_n - \big|w_{11}^{(n)}-\lambda_n\big|
        - \big|w_{22}^{(n)} - \overline{\lambda_n} \big| \geq \delta,
	\]
	by Lemma \ref{lem:2}, we have $(\gamma_n) \in \calD_{r, s}(K, \CC)$. Next, using \eqref{eq:13}, we
	verify that if
	\[
		W_n 
		\begin{pmatrix}
			1 \\
			v_n
		\end{pmatrix}
		=
		\gamma_n
		\begin{pmatrix}
			1 \\
			v_n
		\end{pmatrix}, 
		\quad \text{then} \quad
		W_n 
		\begin{pmatrix}
			\overline{v_n} \\
			1
		\end{pmatrix}
		=
		\overline{\gamma_n}
		\begin{pmatrix}
			\overline{v_n} \\
			1
		\end{pmatrix}.
	\]
	Hence, the matrix $Y_n$ has a form
	\[
		Y_n = 
		\begin{pmatrix}
			1 & \overline{v_n} \\
			v_n & 1
		\end{pmatrix}
	\]
	provided that the equation
	\[
		\gamma_n 		
		\begin{pmatrix}
			1 \\
			v_n
		\end{pmatrix}
		=
		W_n 
		\begin{pmatrix}
			1 \\
			v_n
		\end{pmatrix}
	\]
	has a solution. By \eqref{eq:9}, it is equivalent to the system
	\[
		\begin{cases}
			\gamma_n &= w_{11}^{(n)} + w_{12}^{(n)} v_n \\
			\gamma_n v_n &= w_{21}^{(n)} + w_{22}^{(n)} v_n.
		\end{cases}
	\]
	By inserting the first equation into the second, we arrive at the quadratic equation for $v_n$,
	which has a solution
	\[
		v_n = 
		\frac{w_{22}^{(n)}-w_{11}^{(n)} 
		+ i \sqrt{\big|\big(w_{22}^{(n)} - w_{11}^{(n)}\big)^2 + 4 w_{12}^{(n)} w_{21}^{(n)}\big|}}
		{2 w_{12}^{(n)}}.
	\]
	Next, by \eqref{eq:13} we obtain $w_{11}^{(n)} = \overline{w_{22}^{(n)}}$. Thus, by 
	direct computations, one gets
	\[
		v_n = -i \frac{w_{21}^{(n)}}{\Im \big(w_{11}^{(n)} + \gamma_n \big)}.
	\]
	Since for all $x \in K$ and $n$ sufficiently large,
	\[
		c \geq \big|\Im\big(w_{11}^{(n)}(x) + \gamma_n(x)\big)\big| \geq \delta,
	\]
	thus by Corollaries \ref{cor:2} and \ref{cor:1}(ii), we conclude that $(Y_n)$ belongs to
	$\calD_{r, s+1}\big(K, \GL(2, \CC)\big)$. Because
	\[
		\lim_{n \to \infty} \sup_K{\abs{v_n}} = 0,
	\]
	we easily obtain \eqref{eq:5}. 
\end{proof}

\section{Iterated diagonalization}
\label{sec:3}
\subsection{Uniform diagonalization}
For a sequence of square matrices $(X_n : n \in \NN)$ and $n, m \in \NN$ we set 
\[
	\prod_{j = m}^n X_j =
	\begin{cases}
		X_n X_{n-1} \cdots X_m & \text{if } n \geq m,\\
		\Id & \text{otherwise.}
	\end{cases}
\]
\begin{definition} \label{def:1}
	Let $X = (X_n : n \in \NN)$ be a sequence of continuous mappings defined on $K$, some compact subset of $\RR$, with
	values in $\GL(2, \RR)$. Then $(X_n)$ is \emph{uniformly diagonalizable on $K$,} if there is $M>0$
	such that for all $n > m \geq M$,
	\[
		\prod_{j=m}^n X_j = Q_n \bigg( \prod_{j=m}^n D_j C_j^{-1} C_{j-1} \bigg) Q_{m-1}^{-1}
	\]
	where
	\begin{enumerate}[(a)]
	\item
	\label{def:1:eq:a}
	for every $j \geq M$ the mappings $Q_j,Q_{j-1}^{-1}, D_j, D_j^{-1}, C_{j-1}$ and $C^{-1}_j$ 
	are continuous on $K$ and such that for some $r \geq 1$,
	\[
		(C_j : j \geq M) \in \calD_{1, 0}\big(K, \GL(2, \CC) \big), \qquad\text{and}\qquad
		(D_j : j \geq M) \in \calD_{r, 0} \big( K, \diag(2, \CC) \big);
	\]
	\item \label{def:1:eq:b}
	there are non-singular matrices $C_\infty$, $Q_\infty$, and $D_\infty$ such that
	\[
		\lim_{n \to \infty} \sup_K \| C_n - C_\infty \| = 0,
		\qquad
		\lim_{n \to \infty} \sup_K \| Q_n - Q_\infty \| = 0,
		\qquad\text{and}\qquad
		\lim_{n \to \infty} \sup_K \| D_n - D_\infty \| = 0.
	\]
	\end{enumerate}
\end{definition}

The following theorem provides a sufficient condition for uniform diagonalization.
\begin{theorem}
	\label{thm:5}
	Let $(X_n : n \in \NN)$ be a sequence of continuous mappings defined on $\RR$ with values in $\GL(2, \RR)$. Let
	$K$ be a compact subset of
	\[
		\Big\{x \in \RR : \lim_{n \to \infty} \discr \big( X_n(x) \big) 
		\text{ exists and is negative} \Big\}.
	\]
	Suppose that $(X_n : n \in \NN) \in \calD_{r, 0}\big(K, \GL(2, \RR) \big)$, for some $r \geq 1$. If the sequences
	\[
		\big(\tr X_n : n \in \NN\big), \qquad\text{and}\qquad \big(\det X_n : n \in \NN \big)
	\]
	converge uniformly on $K$, then $(X_n)$ is uniformly diagonalizable on $K$ with the sequence of diagonal matrices
	$(D_j : j \geq M)$ such that
	\[
		\lim_{n, m \to \infty} \prod_{j = m}^n \Big( (\det D_j)^{-1} (\det X_j) \Big) = 1,
	\]
	uniformly on $K$. Moreover,
	\begin{equation} \label{eq:152}
		\lim_{n \to \infty} \sup_K \big\| D_n - \diag(\lambda_n, \overline{\lambda_n}) \big\| = 0,
	\end{equation}
	where
	\begin{equation} \label{eq:153}
		\lambda_n(x) = \frac{\tr X_n(x) }{2} + \frac{i}{2} \sqrt{\abs{\discr{X_n(x)}}}.
	\end{equation}
\end{theorem}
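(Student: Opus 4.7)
The plan is to prove Theorem \ref{thm:5} by \emph{iterated diagonalization}: starting from a pointwise spectral decomposition of $X_n$, I will repeatedly invoke Theorem \ref{thm:4} to gain one extra order of regularity at each step, until the resulting diagonalizer is summable enough to satisfy Definition \ref{def:1}. First I verify that $\lambda_n(x) := \tfrac12 \tr X_n(x) + \tfrac{i}{2}\sqrt{|\discr X_n(x)|}$ is a uniform Cauchy sequence on $K$, belongs to $\calD_{r, 0}(K, \CC)$, and satisfies $\Im \lambda_n \geq \delta > 0$ for $n$ large. Membership in $\calD_{r, 0}$ follows from Corollary \ref{cor:1} applied to $\tr X_n, \det X_n$ and from Lemma \ref{lem:2} for the square-root, valid since $|\discr X_n|$ is eventually bounded below uniformly on $K$ ($K$ is compact and the limit of the discriminant is strictly negative). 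The Cauchy property is inherited from $\tr X_n$ and $\det X_n$. Because $X_n$ is real with complex-conjugate eigenvalues, I pick a pointwise eigenvector matrix $V_n$ whose columns are complex conjugates, so that $V_n J = \overline{V_n}$ (with $J = \begin{pmatrix} 0 & 1 \\ 1 & 0 \end{pmatrix}$), matching the symmetry \eqref{eq:3}. By Corollaries \ref{cor:1}, \ref{cor:2} and Lemma \ref{lem:2}, both $V_n$ and $V_n^{-1}$ lie in $\calD_{r, 0}(K, \GL(2, \CC))$, and $X_n = V_n D_n^{(0)} V_n^{-1}$ with $D_n^{(0)} := \diag(\lambda_n, \overline{\lambda_n})$.

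Telescoping gives
\[
\prod_{j=m}^n X_j = V_n \bigg( \prod_{j=m}^n D_j^{(0)} V_j^{-1} V_{j-1} \bigg) V_{m-1}^{-1},
\]
and the inner factor $D_j^{(0)} V_j^{-1} V_{j-1}$ is exactly what Theorem \ref{thm:4} diagonalizes when applied with input $V_n$ and eigenvalue $\lambda_n$, yielding $Y_n^{(1)}, \lambda_n^{(1)} \in \calD_{r, 1}$ with $D_n^{(0)} V_n^{-1} V_{n-1} = Y_n^{(1)} D_n^{(1)} (Y_n^{(1)})^{-1}$, where $D_n^{(1)} := \diag(\lambda_n^{(1)}, \overline{\lambda_n^{(1)}})$, $Y_n^{(1)} \to \Id$ and $\lambda_n^{(1)} - \lambda_n \to 0$ uniformly. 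Re-telescoping converts one level of $V_j$ into $Y_j^{(1)}$, and I iterate: at step $k$ I apply Theorem \ref{thm:4} with $Y_n^{(k)}$ and $\lambda_n^{(k)}$ to obtain $Y_n^{(k+1)}, \lambda_n^{(k+1)} \in \calD_{r, k+1}$. At each step I must check that the hypotheses of Theorem \ref{thm:4} remain in force: the symmetry \eqref{eq:3} is preserved by the conclusion of Theorem \ref{thm:4}, and the uniform lower bound $\Im \lambda_n^{(k)} \geq \delta/2$ persists because $\lambda_n^{(k)} - \lambda_n \to 0$ uniformly on $K$. After $r-1$ iterations, $Y_n^{(r-1)} \in \calD_{r, r-1}$, which for first differences coincides with $\calD_{1, 0}$ since the only relevant exponent is $r/r = 1$.

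Setting $Q_n := V_n Y_n^{(1)} \cdots Y_n^{(r-1)}$, $C_n := Y_n^{(r-1)}$, and $D_n := D_n^{(r-1)}$, an induction on the iteration count yields the decomposition $\prod_{j=m}^n X_j = Q_n \big(\prod_{j=m}^n D_j C_j^{-1} C_{j-1}\big) Q_{m-1}^{-1}$ for $n > m \geq M$, with $M$ chosen large enough that each iterative construction is well-defined. The regularity claims of Definition \ref{def:1}(a) follow: $(C_n) \in \calD_{1, 0}$ by the above and $(D_n) \in \calD_{r, 0}$ by \eqref{eq:8}. Iterating \eqref{eq:5} and \eqref{eq:1} gives $C_n \to \Id$ and $D_n \to \diag(\lambda_\infty, \overline{\lambda_\infty})$, so \eqref{eq:152} and \eqref{eq:153} hold. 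The convergence of $Q_n$ and the identity
\[
\lim_{n, m \to \infty} \prod_{j=m}^n (\det D_j)^{-1} \det X_j = 1
\]
are then obtained by tracking determinants through the iteration: since $\det Y_n^{(k)} \to 1$ uniformly and each $\det D_n^{(k)}/\det D_n^{(k-1)}$ contributes only a logarithmic correction whose absolute value is summable in the relevant $\calD_{r, k}$ sense, the cumulative product of $(\det D_j)^{-1}\det X_j$ collapses to $1$ in the limit.

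The main technical hurdle is carefully propagating the hypotheses of Theorem \ref{thm:4} through the iteration --- in particular the symmetry condition \eqref{eq:3} and the uniform lower bound on $\Im \lambda_n^{(k)}$ --- and bookkeeping the regularity classes of the accumulating product $V_n Y_n^{(1)} \cdots Y_n^{(k)}$ so that after exactly $r-1$ applications one lands in the $\calD_{1, 0}$ class demanded by Definition \ref{def:1}. The payoff of this boosting scheme is that, even though the initial diagonalizer $V_n$ only satisfies the weak pointwise regularity $\calD_{r, 0}$, the compounded smoothing supplied by Theorem \ref{thm:4} upgrades $C_n = Y_n^{(r-1)}$ into the $\ell^1$-summable-variation regime, which is exactly what is needed to control the inner product $\prod_j D_j C_j^{-1} C_{j-1}$ as a small perturbation of the diagonal $\prod_j D_j$.
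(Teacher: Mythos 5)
Your construction is essentially the paper's own proof: diagonalize $X_n=C_{n,0}D_{n,0}C_{n,0}^{-1}$ with a conjugate-column eigenvector matrix (your $V_n$ is the paper's $C_{n,0}$, which indeed satisfies the second alternative of \eqref{eq:3}), telescope the product, and apply Theorem \ref{thm:4} exactly $r-1$ times so that the final conjugating sequence $C_n=Y_n^{(r-1)}\in\calD_{r,r-1}$ has absolutely summable first differences, then take $Q_n=V_nY_n^{(1)}\cdots Y_n^{(r-1)}$, $D_n=D_n^{(r-1)}$. The propagation of the symmetry condition, of the uniform lower bound on $\Im\lambda_n^{(k)}$, and of the regularity classes all match the paper's argument, and \eqref{eq:152} follows, as you say, by iterating \eqref{eq:1}.

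The one step whose justification is wrong as stated is the determinant claim. You argue that each ratio $\det D_n^{(k)}/\det D_n^{(k-1)}$ contributes a logarithmic correction whose absolute value is summable; but taking determinants in $D_{n,k-1}C_{n,k-1}^{-1}C_{n-1,k-1}=C_{n,k}D_{n,k}C_{n,k}^{-1}$ shows this ratio equals $\det\big(C_{n,k-1}^{-1}C_{n-1,k-1}\big)$, and for the levels $k-1\in\{0,\dots,r-2\}$ that actually occur, membership in $\calD_{r,k-1}$ only controls first differences in $\ell^{r/k}$ with $r/k>1$ (already for $k-1=0$ the eigenvector matrix has merely $\ell^{r}$-variation), so absolute summability is not available. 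No summability is needed: the product $\prod_{j=m}^{n}\det\big(C_{j,k}^{-1}C_{j-1,k}\big)$ telescopes exactly to the boundary term $\det\big(C_{n,k}^{-1}C_{m-1,k}\big)$, which tends to $1$ as $n,m\to\infty$ because $C_{n,k}\to\Id$ for $k\ge 1$ and $C_{n,0}$ converges to a nonsingular limit; this is precisely the identity \eqref{eq:62} combined with \eqref{eq:89} in the paper. Likewise, the uniform convergence of $Q_n$ to a nonsingular limit needs no determinant tracking at all: it is immediate since each factor converges uniformly ($V_n\to C_\infty$, $Y_n^{(k)}\to\Id$). With the telescoping argument substituted for the summability claim, your proof coincides with the paper's.
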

\begin{proof}
	Since the sequence $(\discr X_n)$ is uniformly convergent on $K$, there are $\delta > 0$ and $M \geq 1$
	such that for all $n \geq M$ and $x \in K$,
	\begin{equation}
		\label{eq:15}
			\discr \big( X_n(x) \big) \leq -\delta.
	\end{equation}
	Hence, for $x \in K$, the matrix $X_n(x)$ has eigenvalues $\lambda_n(x)$ and 
	$\overline{\lambda_n(x)}$ satisfying \eqref{eq:153}.
	Since $(X_n) \in \calD_{r, 0}\big(K, \GL(2, \RR) \big)$, by \eqref{eq:15} and Lemma \ref{lem:2} we obtain that
	$(\lambda_n) \in \calD_{r, 0}(K, \CC)$, and
	\[
		\Im \lambda_n(x) = \frac{1}{2} \sqrt{\abs{\discr{X_n(x)}}} \geq \frac{1}{2} \sqrt{\delta} > 0.
	\]
	We set
	\[
		C_{n, 0} = 
		\begin{pmatrix}
			1 & 1 \\
			\lambda_n & \overline{\lambda_n}
		\end{pmatrix}
		,
		\qquad\text{and}\qquad
		D_{n, 0} = 
		\begin{pmatrix}
			\lambda_n & 0 \\
			0 & \overline{\lambda_n}
		\end{pmatrix}.
	\]
	Then both $(C_{n, 0} : n \geq M)$ and $(D_{n, 0} : n \geq M)$ belong to $\calD_{r, 0}\big(K, \GL(2, \CC)\big)$ and
	\[
		X_n = C_{n, 0}D_{n, 0} C_{n, 0}^{-1}.
	\]
	By Theorem \ref{thm:4}, there are two sequence of matrices
	\[
		(C_{n, 1}: n \geq M) \in \calD_{r, 1}\big(K, \GL(2, \CC)\big) 
		\qquad \text{and} \qquad 
		(D_{n, 1}: n \geq M) \in \calD_{r, 0}\big(K, \GL(2, \CC)\big),
	\]
	such that
	\[
		D_{n, 0} C_{n, 0}^{-1} C_{n-1, 0} = C_{n, 1} D_{n, 1} C_{n, 1}^{-1},
	\]
	and
	\[
		D_{n, 1} = 
		\begin{pmatrix}
			\gamma_{n, 1} & 0 \\
			0 & \overline{\gamma_{n, 1}}
		\end{pmatrix}.
	\]
	Therefore, for $M < m < n$,
	\begin{align*}
		\prod_{j=m}^n X_j
		&= 
		\big(C_{n, 0} D_{n, 0} C_{n, 0}^{-1} \big) \cdots \big(C_{m, 0} D_{m, 0} C_{m, 0}^{-1}\big) \\
		&=
		C_{n, 0} \big(D_{n, 0} C_{n, 0}^{-1} C_{n-1, 0}\big) \cdots \big(D_{m, 0} C_{m, 0}^{-1} C_{m-1, 0}\big) 
		C_{m-1, 0}^{-1} \\
		&=
		C_{n, 0} \big(C_{n, 1} D_{n, 1} C_{n, 1}^{-1} \big)\cdots \big(C_{m, 1} D_{m, 1} C_{m, 1}^{-1}\big) 
		C_{m-1, 0}^{-1} \\
		&=
		C_{n, 0} C_{n, 1} \big(D_{n, 1} C_{n, 1}^{-1} C_{n-1, 1}\big) \cdots 
		\big(D_{m, 1} C_{m, 1}^{-1} C_{m-1,1}\big) \big(C_{m-1, 0} C_{m-1, 1}\big)^{-1}.
	\end{align*}
	By repeated application of Theorem \ref{thm:4}, for each $k \in \{2, \ldots, r-1\}$, we can find sequences
	\[
		(C_{n, k} : n \geq M) \in \calD_{r, k}\big(K, \GL(2, \CC)\big), 
		\qquad\text{and}\qquad
		(D_{n, k} : n \geq M) \in \calD_{r, 0}\big(K, \GL(2, \CC)\big),
	\]
	such that
	\begin{equation}
		\label{eq:18}
		D_{n, k-1} C_{n, k-1}^{-1} C_{n-1, k-1} = C_{n, k} D_{n, k} C_{n, k}^{-1},
	\end{equation}
	and
	\[
		D_{n, k} = 
		\begin{pmatrix}
			\gamma_{n, k} & 0 \\
			0 & \overline{\gamma_{n, k}}
		\end{pmatrix}.
	\]
	Hence,
	\[
		\prod_{j=m}^n X_j
		=
		Q_n
		\Big( \prod_{j=m}^n D_{j, r-1} C_{j, r-1}^{-1} C_{j-1, r-1} \Big)
		Q_{m-1}^{-1},
	\]
	where
	\[
		Q_n = \prod_{k=0}^{r-1} C_{n, k}
	\]
	and for every $k \in \{1, 2, \ldots, r-1 \}$
	\begin{equation}
		\label{eq:89}
		\lim_{j \to \infty} \sup_K \| C_{j, k} - \Id \| = 0.
	\end{equation}
	Furthermore, by \eqref{eq:18}, we get
	\[
		\det(D_{n, k}) = \det(D_{n, k-1}) \det \big( C_{n, k-1}^{-1} C_{n-1, k-1} \big),
	\]
	thus,
	\begin{align}
		\nonumber
		\prod_{j=m}^n 
		\det(D_{j, r-1})
		&= \prod_{j=m}^n \bigg(\abs{\lambda_{j}}^2 
		\prod_{k=0}^{r-1} 
		\det \big( C_{j,k}^{-1} C_{j-1, k} \big) \bigg) \nonumber \\
		\label{eq:62}
		&= 
		\Big( \prod_{j=m}^{n} \det X_j \Big) 
		\Big( \prod_{k=0}^{r-1} \det \big( C_{n, k}^{-1} C_{m-1, k} \big) \Big),
	\end{align}
	which together with \eqref{eq:89} completes the proof.
\end{proof}

\begin{remark} 
	\label{rem:1}
	Suppose that $(X_n : n \geq M)$ is a sequence of matrices such that
	\[
		X_n = C D C^{-1}
	\]
	where $D$ is a diagonal matrix. For $n > m > M$, by applying the reasoning presented in the 
	proof of Theorem \ref{thm:5}, we get
	\[
		\prod_{j = m}^n 
		X_n = Q_n \bigg(\prod_{j = m}^n D_j C_j^{-1} C_{j-1} \bigg) Q_{m-1}^{-1},
	\]
	where $Q_j = C$, $D_j = D$, and $C_j = \Id$.
\end{remark}

In the next two proposition, we deduce some estimates satisfied by uniformly diagonalizable sequences.
\begin{proposition} 
	\label{prop:1}
	Suppose that the sequence $(X_n : n \in \NN)$ is uniformly diagonalizable on some compact set $K$, $K \subset \RR$.
	Then there is a constant $c>0$ such that for all $m, n \geq M$, uniformly on $K$,
	\begin{equation}
		\label{eq:21}
		\bigg\| \prod_{j=m}^n D_{j} C_j^{-1} C_{j-1} \bigg\|
		\leq c \prod_{j=m}^n \| D_{j} \|,
		\qquad\text{and}\qquad
		\bigg\| \Big( \prod_{j=m}^n D_{j} C_j^{-1} C_{j-1} \Big)^{-1} \bigg\|
   		\leq c \prod_{j=m}^n \| D_{j} \|^{-1},
	\end{equation}
	and 
	\begin{equation} 
		\label{eq:22}
		\Big\| \prod_{j=m}^n D_{j} C_j^{-1} C_{j-1} - \prod_{j=m}^n D_{j} \Big\| 
		\leq c \Big( \prod_{j=m}^n \| D_{j} \| \Big) \cdot \sum_{j=m}^n \sup_K \| \Delta C_{j-1} \|.
	\end{equation}
\end{proposition}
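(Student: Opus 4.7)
My plan is to isolate the diagonal factor $S_{m,n} = \prod_{j=m}^n D_j$ and treat $C_j^{-1} C_{j-1} = \Id - C_j^{-1} \Delta C_{j-1}$ as a small perturbation, then control the deviation by a Gronwall-type iteration. Write
\[
	D_j C_j^{-1} C_{j-1} = D_j + \epsilon_j, \qquad \epsilon_j = -D_j C_j^{-1} \Delta C_{j-1}.
\]
Since $C_j \to C_\infty$ uniformly on $K$ with $C_\infty$ non-singular, $\sup_j \sup_K \|C_j^{-1}\|$ is finite; since $(C_j) \in \calD_{1, 0}\big(K, \GL(2, \CC)\big)$, we have $\sum_j \sup_K \|\Delta C_{j-1}\| < \infty$. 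The crucial structural fact, inherited from the construction in Theorem \ref{thm:5}, is that each $D_j$ has complex-conjugate diagonal entries, so both $D_j$ and the product $S_{m,n}$ have diagonal entries of equal modulus. This gives
\[
	\|S_{m,n}\| = \prod_{j=m}^n \|D_j\|, \qquad \|S_{m,n}^{-1}\| = \prod_{j=m}^n \|D_j\|^{-1},
\]
together with a conjugation bound $\|S_{m,n-1}^{-1} X S_{m,n-1}\| \leq c \|X\|$, independent of $m, n \geq M$ and $x \in K$.

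Now set $U_{m,n} = S_{m,n}^{-1} R_{m,n}$, where $R_{m,n} = \prod_{j=m}^n D_j C_j^{-1} C_{j-1}$. Using $R_{m,n} = (D_n + \epsilon_n) R_{m,n-1}$, together with $S_{m,n}^{-1} D_n = S_{m,n-1}^{-1}$ and $D_n^{-1} \epsilon_n = -C_n^{-1} \Delta C_{n-1}$, a short computation yields the one-step identity
\[
	U_{m,n} - U_{m, n-1} = -S_{m,n-1}^{-1} C_n^{-1} (\Delta C_{n-1}) S_{m,n-1} \, U_{m,n-1}.
\]
Applying the conjugation bound gives $\|U_{m,n} - U_{m,n-1}\| \leq c \sup_K \|\Delta C_{n-1}\| \cdot \|U_{m,n-1}\|$. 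From $U_{m,m-1} = \Id$ and iteration, $\|U_{m,n}\| \leq \prod_{j=m}^n \big(1 + c \sup_K \|\Delta C_{j-1}\|\big) \leq c'$ uniformly, and summing the step-sizes gives $\|U_{m,n} - \Id\| \leq c'' \sum_{j=m}^n \sup_K \|\Delta C_{j-1}\|$. From the factorizations $R_{m,n} = S_{m,n} U_{m,n}$ and $R_{m,n} - S_{m,n} = S_{m,n}(U_{m,n} - \Id)$, the first inequality of \eqref{eq:21} and the estimate \eqref{eq:22} follow directly.

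For the second inequality of \eqref{eq:21}, write $R_{m,n}^{-1} = U_{m,n}^{-1} S_{m,n}^{-1}$. A determinant telescoping gives
\[
	\det U_{m,n} = \prod_{j=m}^n \det(C_j^{-1} C_{j-1}) = \det(C_{m-1})/\det(C_n),
\]
which is uniformly bounded away from zero on $K$ by non-singularity of $C_\infty$. Combined with the uniform bound on $\|U_{m,n}\|$ and the $2 \times 2$ identity $\|U^{-1}\| \leq c \|U\|/|\det U|$, this yields $\|U_{m,n}^{-1}\| \leq c$ uniformly; multiplying by $\|S_{m,n}^{-1}\| = \prod_{j=m}^n \|D_j\|^{-1}$ finishes the proof.

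The main obstacle is that non-commutativity between $D_j$ and $C_j^{-1} \Delta C_{j-1}$ could in principle generate an extra exponential factor in $n - m$. Factoring out $S_{m,n}$ first and pushing the perturbation through by conjugation dissolves this difficulty, but only because $S_{m,n}$ has equal-modulus eigenvalues; this complex-conjugate structure of the $D_j$'s, produced by the iterated diagonalization, is precisely the ingredient that allows the clean reciprocal bounds $\prod \|D_j\|$ and $\prod \|D_j\|^{-1}$ in \eqref{eq:21}.
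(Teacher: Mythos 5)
Your proof is correct, but it follows a genuinely different route from the paper's. The paper proves the first bound in \eqref{eq:21} by pure factor-by-factor submultiplicativity: writing $C_j^{-1}C_{j-1}=\Id-C_j^{-1}\Delta C_{j-1}$, the norm of the product is at most $\prod_j\|D_j\|\cdot\prod_j\bigl(1+\|C_j^{-1}\|\,\|\Delta C_{j-1}\|\bigr)$, and the second factor is uniformly bounded since $\sum_j\sup_K\|\Delta C_j\|<\infty$; the second bound is obtained ``similarly'' for the inverse factors, and \eqref{eq:22} comes from a telescoping sum in which one factor at a time is replaced by $D_j$ and each term is controlled by the already-proved \eqref{eq:21}. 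You instead normalize by the full diagonal product $S_{m,n}$, derive the exact one-step recursion for $U_{m,n}=S_{m,n}^{-1}R_{m,n}$, and run a discrete Gronwall iteration, getting the stronger uniform statement $\|U_{m,n}-\Id\|\le c\sum_{j=m}^n\sup_K\|\Delta C_{j-1}\|$ from which \eqref{eq:21} and \eqref{eq:22} both drop out; your inverse bound goes through the telescoping determinant $\det U_{m,n}=\det C_{m-1}/\det C_n$ and the $2\times 2$ adjugate identity rather than submultiplicativity. The trade-off: the paper's argument for the first bound and for \eqref{eq:22} needs no structure on the diagonal entries at all, whereas your conjugation bound $\|S_{m,n-1}^{-1}XS_{m,n-1}\|\le c\|X\|$ and the identities $\|S_{m,n}^{\pm 1}\|=\prod_j\|D_j\|^{\pm 1}$ require the equal-modulus (complex-conjugate) diagonal entries, which is not part of Definition \ref{def:1} but is supplied by Theorem \ref{thm:5}. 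This importation is acceptable, and in fact honest: the second inequality in \eqref{eq:21} genuinely needs $\|D_j^{-1}\|$ comparable to $\|D_j\|^{-1}$ (it fails, e.g., for constant $D_j=\diag(2,1)$, $C_j=\Id$), so the paper's terse ``similarly'' tacitly uses the same conjugate structure that you invoke explicitly; in every application of the proposition in the paper this structure is present.
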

\begin{proof}
	Let us show the first inequality in \eqref{eq:21}. We have
	\[
		\Big\| \prod_{j=m}^n D_{j} C_{j}^{-1} C_{j-1} \Big\|
		\leq
		\Big( \prod_{j=m}^n \| D_{j} \| \Big)
		\cdot
		\Big(
		\prod_{j = m}^n \big(1 + \| C_{j}^{-1} \| \cdot \| \Delta C_{j-1} \| \big) \Big).
	\]
	Since $(C_{j}^{-1} : j \geq M)$ is uniformly bounded and
	\[
		\sum_{j=M}^\infty \sup_K \| \Delta C_{j} \| < \infty,
	\]
	we easily get
	\begin{equation}
		\label{eq:23}
		\Big\| \prod_{j=m}^n D_{j, r-1} C_{j}^{-1} C_{j-1} \Big\| \leq  
		c \prod_{j=m}^n \| D_{j} \|,
	\end{equation}
	for some $c > 0$. Similarly we prove the second inequality in \eqref{eq:21}.

	Next, we write
	\[
		\bigg\|
		\prod_{j = m}^n \big(D_{j} C_{j}^{-1} C_{j-1} \big)
		-
		\prod_{j = m}^n D_{j}
		\bigg\|
		\leq
		\sum_{k = m}^n
		\bigg\|
		\bigg( \prod_{j=k}^n D_j \bigg) 
		\big(C_k^{-1} \Delta C_{k-1} \big) 
		\bigg( \prod_{j=m}^{k-1} \big(D_j C_j^{-1} C_{j-1} \big) \bigg)
		\bigg\|,
	\]
	which, by \eqref{eq:23}, is bounded by a constant multiply of
	\[
		\sum_{k=m}^n \bigg\{ 
		\Big( \prod_{j=k}^n \| D_j \| \Big) \cdot \| \Delta C_{k-1} \| \cdot
		\Big( \prod_{j=m}^{k-1} \| D_j \| \Big) \bigg\} 
		=
		\Big( \prod_{j=m}^n \| D_j \| \Big) 
		\sum_{k=m}^n \| \Delta C_{k-1} \|,
	\]
	proving \eqref{eq:22}.
\end{proof}

\begin{proposition} 
	\label{prop:7}
	Suppose that the sequence $(X_n : n \in \NN)$ is uniformly diagonalizable on some compact set $K$, $K \subset \RR$.
	Then there is a constant $c>0$ such that for all $n, m \geq M$, uniformly on $K$,
	\begin{align*}
		&
		\bigg\|\prod_{j = {m+1}}^{n+1} X_j - \prod_{j = m}^n X_j \bigg\| \\
		&\qquad\leq
		c
		\bigg(\prod_{j = m+1}^n \|D_j\|\bigg)
		\cdot
		\bigg( 
		\| Q_{n+1} - Q_{n} \| +  \| Q_{m}^{-1} - Q_{m-1}^{-1} \| + \| D_{n+1} - D_{m} \|
		+
		\sum_{j=m-1}^n \sup_K \| \Delta C_{j} \|\bigg).
	\end{align*}
\end{proposition}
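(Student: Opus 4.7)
The plan is to unfold both products via the uniform diagonalization from Definition \ref{def:1}. Writing
\[
	\prod_{j=m+1}^{n+1} X_j = Q_{n+1} P_{n+1, m+1} Q_{m}^{-1}, \qquad
	\prod_{j=m}^{n} X_j = Q_{n} P_{n, m} Q_{m-1}^{-1},
\]
where I set $P_{N,M} = \prod_{j = M}^N D_j C_j^{-1} C_{j-1}$, I would then use the standard three-term telescoping
\[
	Q_{n+1} P_{n+1,m+1} Q_m^{-1} - Q_n P_{n,m} Q_{m-1}^{-1}
	= (Q_{n+1} - Q_n) P_{n+1,m+1} Q_m^{-1} + Q_n (P_{n+1,m+1} - P_{n,m}) Q_m^{-1} + Q_n P_{n,m} (Q_m^{-1} - Q_{m-1}^{-1}).
\]

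The first and third terms are handled immediately. By \eqref{eq:21} one has $\|P_{n+1,m+1}\| \leq c \prod_{j=m+1}^{n+1} \|D_j\|$ and $\|P_{n,m}\| \leq c \prod_{j=m}^{n} \|D_j\|$, and since $\|D_{n+1}\|$ and $\|D_m\|$ are uniformly bounded by Definition \ref{def:1}\eqref{def:1:eq:b}, each is at most $c\prod_{j=m+1}^n \|D_j\|$. Combining this with the uniform boundedness of $\|Q_n\|$ and $\|Q_m^{-1}\|$ delivers the two contributions involving $\|Q_{n+1} - Q_n\|$ and $\|Q_m^{-1} - Q_{m-1}^{-1}\|$ in the target bound.

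The heart of the argument is the middle term $Q_n (P_{n+1,m+1} - P_{n,m}) Q_m^{-1}$. I would split
\[
	P_{n+1,m+1} - P_{n,m}
	= \bigg(P_{n+1,m+1} - \prod_{j=m+1}^{n+1} D_j\bigg)
	+ \bigg(\prod_{j=m+1}^{n+1} D_j - \prod_{j=m}^{n} D_j\bigg)
	+ \bigg(\prod_{j=m}^n D_j - P_{n,m}\bigg).
\]
The two outer differences are controlled by \eqref{eq:22}, each bounded by a constant multiple of $\prod_{j=m+1}^n \|D_j\|$ times a telescoping sum of $\|\Delta C_{j-1}\|$, and after relabelling both ranges are covered by $\sum_{j = m-1}^{n} \sup_K \|\Delta C_j\|$. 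For the middle difference I would use the crucial fact that the $D_j$ are \emph{diagonal} and hence commute: setting $\tilde{D} = \prod_{j=m+1}^n D_j$, it equals $D_{n+1} \tilde{D} - \tilde{D} D_m = (D_{n+1} - D_m)\tilde{D}$, whose norm is at most $\|D_{n+1} - D_m\| \cdot \prod_{j = m+1}^n \|D_j\|$.

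Summing these three pieces, multiplying by the uniformly bounded factors $\|Q_n\|$ and $\|Q_m^{-1}\|$, and absorbing all constants yields the desired estimate. The main obstacle, such as it is, lies purely in bookkeeping: each ambient product factor must be reduced exactly to $\prod_{j=m+1}^n \|D_j\|$, which requires systematically absorbing the extra $\|D_m\|$ and $\|D_{n+1}\|$ into the constant $c$ via Definition \ref{def:1}\eqref{def:1:eq:b}, and the summation ranges produced by the two applications of Proposition \ref{prop:1} must be aligned so that together they sit inside $\sum_{j = m-1}^n \sup_K \|\Delta C_j\|$.
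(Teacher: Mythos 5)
Your proposal is correct and follows essentially the same route as the paper: the same three-term telescoping of $Q_{n+1}P_{n+1,m+1}Q_m^{-1}-Q_nP_{n,m}Q_{m-1}^{-1}$, with the outer terms controlled by \eqref{eq:21} and the middle term split exactly as in \eqref{eq:27} and bounded via \eqref{eq:22}. Your explicit use of commutativity of the diagonal factors to write $\prod_{j=m+1}^{n+1}D_j-\prod_{j=m}^{n}D_j=(D_{n+1}-D_m)\prod_{j=m+1}^{n}D_j$ merely spells out the step the paper leaves implicit in \eqref{eq:28}, so there is no substantive difference.
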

\begin{proof}
	Let us observe that
	\begin{equation}
		\label{eq:24}
		\begin{aligned}
		\prod_{j = m+1}^{n+1} X_j - \prod_{j = m}^n X_j
		&=
		(Q_{n+1} - Q_n) \bigg( \prod_{j=m+1}^{n+1} D_{j} C_{j}^{-1} C_{j-1} \bigg) Q_{m}^{-1} \\
		&\phantom{=}+
		Q_n \bigg( \prod_{j=m+1}^{n+1} D_{j} C_{j}^{-1} C_{j-1} - \prod_{j=m}^n D_{j} C_{j}^{-1} C_{j-1} \bigg) Q_m^{-1}\\
		&\phantom{=}+
		Q_n \bigg( \prod_{j=m}^n D_{j} C_{j}^{-1} C_{j-1} \bigg) (Q_m^{-1} - Q_{m-1}^{-1}).
		\end{aligned}
	\end{equation}
	In view of \eqref{eq:21}, the first and the third term on the right-hand side of \eqref{eq:24} are bounded by a
	constant multiple of
	\begin{equation} 
		\label{eq:25}
		\| Q_{n+1} - Q_n \|  \Big( \prod_{j=m+1}^{n+1} \| D_{j} \| \Big) \| Q_{m}^{-1} \|,
	\end{equation}
	and
	\begin{equation} 
		\label{eq:26}
	 	\| Q_{n} \| \Big( \prod_{j=m}^n \| D_{j} \| \Big) \| Q_{m}^{-1} - Q_{m-1}^{-1} \|,
	\end{equation}
	respectively. To bound the second term in \eqref{eq:24}, let us observe that
	\begin{equation}
		\label{eq:27}
		\begin{aligned}
		\prod_{j=m+1}^{n+1} D_{j} C_{j}^{-1} C_{j-1} - \prod_{j=m}^n D_{j} C_{j}^{-1} C_{j-1} 
		&=
		\Big( \prod_{j=m+1}^{n+1} D_{j} C_{j}^{-1} C_{j-1} - \prod_{j=m+1}^{n+1} D_{j} \Big) \\
		&\phantom{=}-
		\Big( \prod_{j=m}^{n} D_{j} C_{j}^{-1} C_{j-1} - \prod_{j=m}^{n} D_{j} \Big) \\
		&\phantom{=}+
		\Big( \prod_{j=m+1}^{n+1} D_{j} - \prod_{j=m}^{n} D_{j} \Big).
		\end{aligned}
	\end{equation}
	Hence, by \eqref{eq:22}, the left-hand side of \eqref{eq:27} is bounded by a constant multiple of
	\begin{equation}
		\label{eq:28}
		\Big( \prod_{j=m+1}^{n+1} \| D_{j} \| + \prod_{j=m}^{n} \| D_{j} \| \Big) 
		\sum_{j=m-1}^n \sup_K \| \Delta C_{j} \|
		+ \| D_{n+1} - D_{m} \| \prod_{j=m+1}^{n} \| D_{j} \|.
	\end{equation}
	Since the sequences $(Q_n : n \geq M)$, $(Q_n^{-1} : n \geq M)$, and $(D_{n} : n \geq M)$ are uniformly bounded on
	$K$, putting together \eqref{eq:25}, \eqref{eq:26}, and \eqref{eq:28} we conclude the proof.
\end{proof}

\section{Generalized shifted Tur\'an determinants}
\label{sec:4}
Let $N$ be a positive integer. The generalized $N$-shifted Tur\'an determinants are defined by the formula
\[
	S_n(\alpha, x) = a_{n+N-1} \big( u_n u_{n+N-1} - u_{n-1} u_{n+N} \big)
\]
where $(u_n : n \in \NN_0)$ is a generalized eigenvector associated with $x \in \RR$ and $\alpha = (u_0, u_1) \in \RR^2
\setminus \{0\}$. Let us denote by $\sS^1$ the unit sphere in $\RR^2$. Observe that
\begin{equation}
	\label{eq:10}
	\begin{pmatrix}
		u_{n} \\
		u_{n+1}
	\end{pmatrix}
	=
	B_n(x)
	\begin{pmatrix}
		u_{n-1} \\
		u_n
	\end{pmatrix}
\end{equation}
where
\[
	B_n(x) = 
	\begin{pmatrix}
		0 & 1 \\
		-\frac{a_{n-1}}{a_n} & \frac{x - b_n}{a_n}
	\end{pmatrix}.
\]
Thus
\[
	S_n(\alpha, x) =
	a_{n+N-1} 
	\bigg\langle
	E X_n(x) 
	\begin{pmatrix}
	u_{n-1} \\
	u_n
	\end{pmatrix},
	\begin{pmatrix}
	u_{n-1} \\
	u_n
	\end{pmatrix}
	\bigg\rangle
\]
with
\begin{equation} \label{eq:30}
	X_n(x) = 
	\prod_{j=n}^{n+N-1}
	B_j(x),
	\qquad\text{and}\qquad
	E = 
	\begin{pmatrix}
	0 & -1 \\
	1 & 0
	\end{pmatrix}.
\end{equation}
Recall that for any $X \in \GL(2, \RR)$, we have
\begin{equation}
	\label{eq:12}
	X^{-1} = -\frac{1}{\det X} E X^t E.
\end{equation}

The following proposition is the main algebraic part of the proof of the next theorem.
\begin{proposition}
	\label{prop:12}
	For each $k \geq 1$ and any generalized eigenvector $(u_n : n \in \NN_0)$ associated with $x \in \RR$ and
	$\alpha \in \RR^2 \setminus \{ 0 \}$,
	\begin{align*}
		\frac{\big| S_{n+kN} - S_n \big|}{a_{n+(k+1)N-1} (u_{n+kN-1}^2 + u_{n+kN}^2)}
		\leq
		\bigg\| \bigg( \prod_{j=1}^{k-1} X_{n+jN} \bigg)^{-1} \bigg\|
		\cdot
		\left\| \prod_{j=1}^{k} X_{n+jN}  - 
		\frac{a_{n+N-1}}{a_{n-1}} \frac{a_{n+kN-1}}{a_{n+(k+1)N-1}} \prod_{j=0}^{k-1} X_{n+jN} \right\|. 
	\end{align*}
\end{proposition}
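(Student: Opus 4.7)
The plan is to express both $S_n$ and $S_{n+kN}$ as bilinear forms in the single vector $\vec u_m := (u_{m-1}, u_m)^t$ evaluated at $m = n+kN$, and then bound the difference by the operator norm of the corresponding matrix acting on $\vec u_{n+kN}$. First, I would set
\[
    R_k = \prod_{j=1}^{k-1} X_{n+jN}, \qquad P_k = \prod_{j=0}^{k-1} X_{n+jN} = R_k X_n,
\]
so that iterating \eqref{eq:10} gives $\vec u_{n+kN} = P_k \vec u_n$. By definition,
\[
    S_{n+kN} = a_{n+(k+1)N-1} \langle E X_{n+kN} \vec u_{n+kN}, \vec u_{n+kN}\rangle, \qquad S_n = a_{n+N-1} \langle E X_n \vec u_n, \vec u_n\rangle.
\]

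Next, I would rewrite $S_n$ in terms of $\vec u_{n+kN}$ by substituting $\vec u_n = P_k^{-1}\vec u_{n+kN}$ and moving $P_k^{-t}$ through $E$. Transposing \eqref{eq:12} (together with $E^2 = -\Id$) yields the identity $X^{-t} E = (\det X)^{-1} E X$, hence $P_k^{-t} E = (\det P_k)^{-1} E P_k$. A telescoping computation from $\det X_{n+jN} = a_{n+jN-1}/a_{n+(j+1)N-1}$ gives $\det P_k = a_{n-1}/a_{n+kN-1}$. The crucial algebraic observation is that the conjugation $P_k X_n P_k^{-1}$ simplifies: since $P_k = R_k X_n$,
\[
    P_k X_n P_k^{-1} = R_k X_n X_n X_n^{-1} R_k^{-1} = R_k X_n R_k^{-1}.
\]
This replaces $P_k^{-1}$ (whose norm I cannot control well) by $R_k^{-1}$ (which appears in the target bound). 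Plugging in,
\[
    S_n = \frac{a_{n+N-1} a_{n+kN-1}}{a_{n-1}} \langle E R_k X_n R_k^{-1} \vec u_{n+kN}, \vec u_{n+kN}\rangle.
\]

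Subtracting and pulling $R_k^{-1}$ out on the right, while using $X_{n+kN} R_k = \prod_{j=1}^k X_{n+jN}$ and $R_k X_n = \prod_{j=0}^{k-1} X_{n+jN}$, the bracket becomes
\[
    a_{n+(k+1)N-1}\left[ \prod_{j=1}^k X_{n+jN} - \frac{a_{n+N-1}}{a_{n-1}}\frac{a_{n+kN-1}}{a_{n+(k+1)N-1}} \prod_{j=0}^{k-1} X_{n+jN}\right] R_k^{-1}.
\]
Applying Cauchy--Schwarz (and $\|E\| = 1$) and dividing by $a_{n+(k+1)N-1}\|\vec u_{n+kN}\|^2 = a_{n+(k+1)N-1}(u_{n+kN-1}^2+u_{n+kN}^2)$ produces the required inequality.

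The only nontrivial step is spotting that the $X_n$ factor inside the conjugation $P_k \,\cdot\, X_n \,\cdot\, P_k^{-1}$ telescopes against the outer $X_n$'s in $P_k$, leaving the cleaner conjugation $R_k X_n R_k^{-1}$; everything else is routine bookkeeping with the identity $X^t E X = (\det X) E$ and the product of determinants $\det P_k$.
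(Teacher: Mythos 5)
Your proof is correct and follows essentially the same route as the paper: rewrite $S_n$ as a quadratic form in $(u_{n+kN-1},u_{n+kN})^t$ via $\vec u_n = P_k^{-1}\vec u_{n+kN}$, use the identity \eqref{eq:12} to turn $P_k^{-t}E$ into $(\det P_k)^{-1}EP_k$ with the telescoped determinant $a_{n-1}/a_{n+kN-1}$, note that $P_kX_nP_k^{-1}=R_kX_nR_k^{-1}$ so only $R_k^{-1}$ survives, and finish with Cauchy--Schwarz. This is exactly the paper's argument (the paper writes the conjugated matrix as $\big(\prod_{j=0}^{k-1}X_{n+jN}\big)\big(\prod_{j=1}^{k-1}X_{n+jN}\big)^{-1}$, which coincides with your $R_kX_nR_k^{-1}$).
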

\begin{proof}
	Using \eqref{eq:10} and \eqref{eq:30}, we can write
	\begin{align*}
		S_n 
		& = a_{n+N-1}
		\bigg\langle 
		E X_n \bigg( \prod_{j=0}^{k-1} X_{n+jN} \bigg)^{-1}
		\begin{pmatrix}
			u_{n+kN-1} \\
			u_{n+kN}
		\end{pmatrix},
		\bigg( \prod_{j=0}^{k-1} X_{n+jN} \bigg)^{-1}
		\begin{pmatrix}
			u_{n+kN-1} \\
			u_{n+kN}
		\end{pmatrix}
		\bigg\rangle \\
		& =
		a_{n+N-1}
		\bigg\langle
		\bigg(\bigg( \prod_{j=0}^{k-1} X_{n+jN} \bigg)^{-1} \bigg)^t 
		E X_n \bigg( \prod_{j=0}^{k-1} X_{n+jN} \bigg)^{-1}
		\begin{pmatrix}
			u_{n+kN-1} \\
			u_{n+kN}
		\end{pmatrix},
		\begin{pmatrix}
			u_{n+kN-1} \\
			u_{n+kN}
		\end{pmatrix}
		\bigg\rangle.
	\end{align*}
	Therefore, by \eqref{eq:12},
	\[
		S_n = a_{n+N-1} \frac{a_{n+kN-1}}{a_{n-1}}
		\bigg\langle
   		E \bigg( \prod_{j=0}^{k-1} X_{n+jN} \bigg) \bigg( \prod_{j=1}^{k-1} X_{n+jN} \bigg)^{-1}
		\begin{pmatrix}
			u_{n+kN-1} \\
			u_{n+kN}
		\end{pmatrix},
		\begin{pmatrix}
			u_{n+kN-1} \\
			u_{n+kN}
		\end{pmatrix}
		\bigg\rangle.
	\]
	Hence, 
	\[
		S_{n+kN} - S_{n} = a_{n+(k+1)N - 1} \bigg\langle E Y_n
		\bigg( \prod_{j=1}^{k-1} X_{n+jN} \bigg)^{-1}
		\begin{pmatrix}
			u_{n+kN-1} \\
			u_{n+kN}
		\end{pmatrix},
		\begin{pmatrix}
			u_{n+kN-1} \\
			u_{n+kN}
		\end{pmatrix}
		\bigg\rangle
	\]
	where
	\[
		Y_n = \bigg( \prod_{j=1}^{k} X_{n+jN} \bigg) - 
		\frac{a_{n+N-1}}{a_{n-1}} \frac{a_{n+kN-1}}{a_{n+(k+1)N-1}}
		\bigg( \prod_{j=0}^{k-1} X_{n+jN} \bigg).
	\]
	Now, by Cauchy--Schwarz inequality, we easily conclude the proof.
\end{proof}

The following theorem is the main result in this section.
\begin{theorem} 
	\label{thm:1}
	Let $N$ and $r$ be positive integers and $i \in \{ 0, 1, \ldots, N-1\}$. Let $K$ be a compact subset of
	\[
		\Lambda = \Big\{x \in \RR : 
		\lim_{k \to \infty} \discr\big(X_{kN+i}(x)\big) \text{ exists and is negative}
		\Big\}.
	\]
	If
	\begin{equation}
		\label{eq:33}
		\lim_{k \to \infty} \frac{a_{(k+1)N+i-1}}{a_{kN+i-1}} = 1,
	\end{equation}
	and
	\begin{equation}
		\label{eq:31}
		(X_{kN+i} : k \in \NN) \in \calD_{r, 0}\big(K, \GL(2, \RR)\big),
	\end{equation}
	then $(S_{kN+i} : k \in \NN)$ converges uniformly on $\sS^1 \times K$.
\end{theorem}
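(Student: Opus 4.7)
The plan is to apply Theorem~\ref{thm:5} to $Y_k := X_{kN+i}$ on $K$ and then combine Propositions~\ref{prop:1}, \ref{prop:7}, and~\ref{prop:12} to show that $(S_{mN+i})$ is uniformly Cauchy on $\sS^1 \times K$. First I verify the hypotheses of Theorem~\ref{thm:5} for $(Y_k)$. Membership in $\calD_{r, 0}\big(K, \GL(2, \RR)\big)$ is assumed. Since
\[
    \det Y_k = \prod_{j=kN+i}^{(k+1)N+i-1}\frac{a_{j-1}}{a_j} = \frac{a_{kN+i-1}}{a_{(k+1)N+i-1}}
\]
is independent of $x$, assumption~\eqref{eq:33} gives uniform convergence $\det Y_k \to 1$. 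The entries of $Y_k$ are polynomials in $x$ of degree at most $N$, so $(\tr Y_k)^2 = \discr Y_k + 4\det Y_k$ is a polynomial of degree at most $2N$; its pointwise convergence on $K$ (inherited from $K \subset \Lambda$ and the uniform convergence of $\det Y_k$) upgrades to uniform convergence within the finite-dimensional polynomial space, and the sign of $\tr Y_k$ is eventually determined by continuity. Hence Theorem~\ref{thm:5} produces a uniform diagonalization with $\|D_k\| \sim |\lambda_k|$ and $|\lambda_k|^2 = b_k/b_{k+1}$, where $b_k := a_{kN+i-1}$.

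By Proposition~\ref{prop:1} and the telescoping $\prod_{j=m}^n |\lambda_j|^2 = b_m/b_{n+1}$, I get uniformly on $K$
\[
    \Big\| \prod_{j=m}^n Y_j \Big\| \leq c \sqrt{b_m/b_{n+1}}, \qquad
    \Big\| \Big(\prod_{j=m}^n Y_j \Big)^{-1} \Big\| \leq c \sqrt{b_{n+1}/b_m}.
\]
The first inequality, combined with the transfer matrix identity, yields the a priori bound $b_m(u_{mN+i-1}^2 + u_{mN+i}^2) \leq C_0$ for $m \geq M$, uniformly on $\sS^1 \times K$. Setting $n = mN+i$ in Proposition~\ref{prop:12}, the scalar becomes $c_{m,k} := (b_{m+1}/b_m)(b_{m+k}/b_{m+k+1})$, and $|1 - c_{m,k}| \to 0$ uniformly in $k \geq 1$ by~\eqref{eq:33}. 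I decompose
\[
    \prod_{j=m+1}^{m+k} Y_j - c_{m,k} \prod_{j=m}^{m+k-1} Y_j
    = \Big( \prod_{j=m+1}^{m+k} Y_j - \prod_{j=m}^{m+k-1} Y_j \Big) + (1 - c_{m,k}) \prod_{j=m}^{m+k-1} Y_j.
\]
Proposition~\ref{prop:7} bounds the first summand by $c \sqrt{b_{m+1}/b_{m+k}}\, \eta_m$, where $\eta_m \to 0$ uniformly in $k$ as $m \to \infty$, since each of the four terms in the parenthetical factor ($\|Q_{m+k} - Q_{m+k-1}\|$, $\|Q_m^{-1} - Q_{m-1}^{-1}\|$, $\|D_{m+k} - D_m\|$, and $\sum_{j \geq m-1}\sup_K \|\Delta C_j\|$) tends to zero uniformly in $k$ by the uniform convergence of $Q_n, Q_n^{-1}, D_n$ and the summability of $\sum_j \|\Delta C_j\|$. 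The second summand is bounded by $|1-c_{m,k}|\cdot c\sqrt{b_m/b_{m+k}}$.

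Assembling everything inside Proposition~\ref{prop:12}, the prefactor $a_{(m+k+1)N+i-1}(u_{(m+k)N+i-1}^2 + u_{(m+k)N+i}^2)$ is dominated by $c(b_{m+k+1}/b_{m+k})\cdot b_m(u_{mN+i-1}^2 + u_{mN+i}^2) \leq C$ for large $m+k$, and the remaining $b_{m+k}$-factors cancel between the inverse-norm bound $c\sqrt{b_{m+k}/b_{m+1}}$ and the matrix-difference estimate, leaving
\[
    \big|S_{(m+k)N+i} - S_{mN+i}\big| \leq C\Big( \eta_m + |1-c_{m,k}|\sqrt{b_m/b_{m+1}} \Big),
\]
which tends to $0$ as $m \to \infty$, uniformly in $k \geq 1$ and in $(\alpha, x) \in \sS^1 \times K$. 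This is the desired uniform Cauchy condition, and uniform convergence follows. The principal technical obstacle is the bookkeeping of the telescoping $b_k$-factors across the four multiplicative pieces, ensuring the final estimate is uniform in $k$; a subsidiary issue is verifying the uniform convergence of $\tr Y_k$ to invoke Theorem~\ref{thm:5}, which is handled by exploiting the polynomial structure in $x$.
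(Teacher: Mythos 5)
Your argument is correct and rests on the same machinery as the paper (Theorem~\ref{thm:5} combined with Propositions~\ref{prop:1}, \ref{prop:7} and \ref{prop:12}), but the final assembly is genuinely different. The paper never proves an a priori bound on $a_{mN+i-1}\big(u_{mN+i-1}^2+u_{mN+i}^2\big)$: it estimates the \emph{ratio} $S^{(i)}_m/S^{(i)}_n$, using the quadratic-form lower bound \eqref{eq:35} (which comes from the uniform negativity of the discriminant) to absorb the eigenvector factor, and in the resulting bound for $f_{n,m}$ the factors $\prod_j\|D_j\|$ and $\prod_j\|D_j\|^{-1}$ cancel \emph{exactly}, so the products of $\|D_j\|$ never have to be identified with ratios of the $a_n$'s; a logarithm argument then turns the ratio estimate \eqref{eq:36} into the Cauchy property. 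You instead prove $|S_{(m+k)N+i}-S_{mN+i}|\to 0$ directly, which forces you to (i) convert $\prod_j\|D_j\|$ into $\sqrt{b_m/b_{n+1}}$ (with $b_k=a_{kN+i-1}$) and (ii) establish the a priori bound $b_m\big(u_{mN+i-1}^2+u_{mN+i}^2\big)\le C_0$, i.e.\ essentially the upper half of Corollary~\ref{cor:8} en route. This works, but one step is justified by an insufficient reason: $\|D_k\|\sim|\lambda_k|$ together with the telescoping $\prod_{j=m}^n|\lambda_j|^2=b_m/b_{n+1}$ does \emph{not} by itself give $\prod_{j=m}^n\|D_j\|\le c\,\sqrt{b_m/b_{n+1}}$, since a product of factors each tending to $1$ need not stay bounded; what you actually need is the determinant-product conclusion stated in Theorem~\ref{thm:5} (equivalently \eqref{eq:62} combined with \eqref{eq:89}), namely $\prod_{j=m}^n(\det D_j)^{-1}\det X_j\to1$ uniformly, which gives precisely $\prod_{j=m}^n|\gamma_j|^2\le c\, b_m/b_{n+1}$ for large $m$. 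With that citation your two product bounds, the a priori eigenvector bound, and the cancellation of the $b$-factors in the final estimate are all sound, and the uniform Cauchy property follows as you state; note also that your verification of the hypotheses of Theorem~\ref{thm:5} (uniform convergence of $\det X_{kN+i}$ from \eqref{eq:33}, and of $\tr X_{kN+i}$ via the bounded-degree polynomial structure and sign stabilization from $\Delta X_{kN+i}\to0$) is more explicit than the paper's, which invokes Theorem~\ref{thm:5} without comment. What the paper's ratio-plus-logarithm route buys is that no a priori bound and no identification of $\prod_j\|D_j\|$ are needed, and the ratio statement is what later feeds the positivity of the limit; what your route buys is a more direct difference estimate that yields the upper bound of Corollary~\ref{cor:8} as a by-product.
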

\begin{proof} 
	To simplify the notation, for $k \in \NN$ we set
	\begin{align*}
		S_k^{(i)} &= S_{k N + i}, &\qquad X_k^{(i)} &= X_{k N + i},
	\intertext{and}
		u^{(i)}_k &= u_{k N + i}, &\qquad a^{(i)}_k &= a_{kN + i}.
	\end{align*}
	Since $\big(\discr \big( X^{(i)}_n \big) : n \in \NN \big)$ is a sequence of polynomials with degrees $\leq 2N$, and
	$K$ is a compact subset of $\Lambda$, there is $\delta > 0$ such that for all $x \in K$ and $n \in \NN$, we have
	\footnote{We set $\sym(X) = \tfrac{1}{2}X + \tfrac{1}{2}X^t$, where $X^t$ is the transpose of $X$.}
	\[
		\det\big(\sym\big(E X_n^{(i)}\big)\big) = -\tfrac{1}{4} \discr(X_n^{(i)}) \geq \delta > 0.
	\]
	By \eqref{eq:31} the sequence $(X_n^{(i)} : n \geq 1)$ is uniformly bounded on $K$. 
	Therefore, there are $c > 0$ and $M \geq 1$ such that for all $n \geq M$ and $x \in K$,
	\begin{equation}
		\label{eq:35}
		\big| S^{(i)}_n(\alpha, x) \big| 
		\geq 
		c a_{n+1}^{(i-1)} \Big( \big( u_{n+1}^{(i-1)}(x) \big)^2 + \big( u_{n+1}^{(i)}(x) \big)^2 \Big).
	\end{equation}
	Our aim is to show that for every $\epsilon > 0$ there is $M \geq 1$ such that for all $n > m \geq M$,
	\begin{equation}
		\label{eq:36}
		\sup_{x \in K} \sup_{\alpha \in \sS^1}
		\bigg| \frac{S_{m}^{(i)}(\alpha, x)}{S_{n}^{(i)}(\alpha, x)} - 1 \bigg| < \epsilon.
	\end{equation}
	By Proposition \ref{prop:12}, for all $n > m \geq 1$,
	\[
		\big| S_{n}^{(i)}(\alpha, x) - S_{m}^{(i)}(\alpha, x) \big| 
		\leq
		a_{n+1}^{(i-1)} \cdot 
		f_{n, m}(x) \cdot
		\Big( \big( u_{n+1}^{(i-1)} \big)^2 + \big( u_{n+1}^{(i)} \big)^2 \Big)
	\]
	where
	\[
		f_{n, m}(x) = 
		\bigg\| \Big( \prod_{j=m+1}^{n-1} X^{(i)}_j(x) \Big)^{-1} \bigg\| \cdot
		\bigg\| \prod_{j=m+1}^n X_j^{(i)}(x) - 
		\frac{a^{(i-1)}_{m+1}}{a^{(i-1)}_{m}} \frac{a^{(i-1)}_{n}}{a^{(i-1)}_{n+1}}
		\prod_{j=m}^{n-1} X_{j}^{(i)}(x) \bigg\|.
	\]
	Hence, by \eqref{eq:35}, there is a constant $c>0$ such that for every $x \in K$ and $\alpha \in \sS^1$,
	\[
		\big| S_{n}^{(i)}(\alpha, x) - S_{m}^{(i)}(\alpha, x) \big| 
		\leq c f_{n, m}(x) \big|S_n^{(i)}(\alpha, x)\big|.
	\]
	We have
	\[
		f_{n, m}(x) 
		\leq 
		\bigg\| \Big( \prod_{j=m+1}^{n-1} X^{(i)}_j(x) \Big)^{-1} \bigg\| \cdot 
		\bigg(
		\Big\| \prod_{j=m+1}^n X_j^{(i)}(x) - 
		\prod_{j=m}^{n-1} X_{j}^{(i)}(x) \Big\| +
		\bigg| \frac{a^{(i-1)}_{m+1}}{a^{(i-1)}_{m}} \frac{a^{(i-1)}_{n}}{a^{(i-1)}_{n+1}} - 1 \bigg| \cdot
		\Big\| \prod_{j=m}^{n-1} X_{j}^{(i)}(x) \Big\| 
		\bigg).
	\]
	By Theorem \ref{thm:5}, the sequence $(X^{(i)}_k : k \in \NN)$ is uniformly diagonalizable. Hence, by
	Proposition \ref{prop:7}, there is $c > 0$ such that for all $x \in K$,
	\[
		f_{n, m}(x) \leq c
		\bigg( \prod_{j=m+1}^{n-1} \| D_j \|^{-1} \bigg) 
		\cdot 
		\bigg(
		\epsilon \prod_{j=m+1}^{n-1} \| D_j \| +
		\bigg| \frac{a^{(i-1)}_{m+1}}{a^{(i-1)}_{m}} \frac{a^{(i-1)}_{n}}{a^{(i-1)}_{n+1}} - 1 \bigg| 
		\cdot
		\prod_{j=m}^{n-1} \| D_j \|
		\bigg)
	\]
	for all $n > m \geq M$, provided that $M$ is large enough. Therefore,
	\[
		\sup_{x \in K} f_{n, m}(x) 
		\leq c
		\Bigg( \epsilon +
		\bigg| \frac{a^{(i-1)}_{m+1}}{a^{(i-1)}_{m}} \frac{a^{(i-1)}_{n}}{a^{(i-1)}_{n+1}} - 1 \bigg| 
		\cdot
		\sup_{x \in K} \| D_m \| 
		\Bigg)
	\]
	which, together with \eqref{eq:33}, implies \eqref{eq:36}. 

	Next, by the mean value theorem we have
	\begin{align*}
		\Big| \log \big| S_n^{(i)} \big| - \log \big| S_m^{(i)} \big| \Big| 
		&\leq 
		\Big| \big| S_n^{(i)} \big| - \big| S_m^{(i)} \big| \Big| 
		\sup_{0 \leq t \leq 1} \frac{1}{\big|S_n^{(i)}\big| + t \big(\big| S_n^{(i)} \big| - \big| S_m^{(i)}\big| \big)}\\
		&\leq
		\frac{1}{1 - \epsilon} \bigg| \frac{S_m^{(i)}}{S_n^{(i)}} - 1 \bigg|,
	\end{align*}
	thus, by \eqref{eq:36}, the sequence $\big(\log \big|S_n^{(i)}\big | : n \geq M\big)$ is a uniform Cauchy sequence of
	functions continuous on $K$. Hence, it converges to a continuous function on $K$. Since
	$\big(\big|S_n^{(i)}\big| : n \geq M \big)$ is uniformly bounded on $K$, by \eqref{eq:36},
	$\big(S_n^{(i)} : n \geq M\big)$ is a Cauchy sequence, and the theorem follows.
\end{proof}

The following corollary follows from Theorem~\ref{thm:1} and the proof of \cite[Theorem 7]{block2018}.
\begin{corollary} \label{cor:8}
Let the hypotheses of Theorem~\ref{thm:1} be satisfied. Then there is a constant $c>1$ such that for every generalized eigenvector $(u_n : n \in \NN_0)$ associated with $x \in K$, and all $n \geq 1$,
\[
	c^{-1} \big( u_0^2 + u_1^2 \big) 
	\leq 
	a_{nN+i-1} \big( u_{nN+i-1}^2 + u_{nN+i}^2 \big)
	\leq
	c \big( u_0^2 + u_1^2 \big).
\]
\end{corollary}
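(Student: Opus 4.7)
The plan is to couple Theorem~\ref{thm:1} with the two-sided comparison between $|S_m|$ and $a_{m+N-1}(u_{m-1}^2+u_m^2)$ that is implicit in its proof. Setting $w_m = (u_{m-1},u_m)^T$, the identity $S_m(\alpha,x) = a_{m+N-1}\sprod{\sym(EX_m(x)) w_m}{w_m}$ together with $\det\sym(EX_m) = -\tfrac{1}{4}\discr X_m \geq \delta > 0$ uniformly on $K$ for $m\geq M$ (while $\|EX_m\|$ is uniformly bounded by \eqref{eq:2}) shows that $\sym(EX_m)$ is uniformly (sign-)definite. Hence
\begin{equation} \label{eq:P1}
c_1 a_{m+N-1}(u_{m-1}^2+u_m^2) \leq |S_m(\alpha,x)| \leq c_2 a_{m+N-1}(u_{m-1}^2+u_m^2)
\end{equation}
for all $m\geq M$ and $(\alpha,x)\in(\RR^2\setminus\{0\})\times K$. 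Specializing to $m=nN+i$ and using $a_{(n+1)N+i-1}/a_{nN+i-1}\to 1$, matters reduce to proving $|S_{nN+i}(\alpha,x)|\asymp u_0^2+u_1^2$ uniformly on $\sS^1\times K$ for all sufficiently large $n$.

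For the upper bound, Theorem~\ref{thm:1} gives that $(S_{nN+i}:n\in\NN)$ is uniformly Cauchy on $\sS^1\times K$, hence uniformly bounded there; rescaling yields $|S_{nN+i}(\alpha,x)|\leq C\|\alpha\|^2$ uniformly. For the lower bound, I view $S_m$ as a quadratic form in $\alpha$. Since $w_m=\Phi_{m-1}\alpha$ with $\Phi_k=B_k\cdots B_1$ (and $\Phi_0=\Id$), one has $S_m(\alpha,x)=\alpha^T M_m(x)\alpha$ for the symmetric matrix
\[
M_m(x) = a_{m+N-1}\,\Phi_{m-1}^T\,\sym(EX_m(x))\,\Phi_{m-1}.
\]
The identity $\det B_j = a_{j-1}/a_j$ telescopes to $\det\Phi_k = a_0/a_k$, and therefore
\[
\det M_{nN+i}(x) = \bigg(\frac{a_{(n+1)N+i-1}}{a_{nN+i-1}}\bigg)^{\!2} a_0^2\cdot\Big({-}\tfrac{1}{4}\discr X_{nN+i}(x)\Big),
\]
which by the hypotheses converges uniformly on $K$ to the strictly positive function $-a_0^2 h(x)/4$ and is therefore bounded below by some $c_0>0$ for all large $n$.

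With $M_{nN+i}$ symmetric, of uniformly positive determinant and uniformly bounded operator norm (the latter because, being symmetric, $\|M_m\|=\sup_{\|\alpha\|=1}|\alpha^T M_m\alpha|$ is dominated by the uniform bound on $|S_m|$ just established), the matrix $M_{nN+i}$ is uniformly (sign-)definite, so
\[
|S_{nN+i}(\alpha,x)| \geq \frac{|\det M_{nN+i}(x)|}{\|M_{nN+i}(x)\|}\|\alpha\|^2 \geq c\|\alpha\|^2
\]
uniformly for $n$ large. Inserting the resulting two-sided bounds on $|S_{nN+i}|$ into \eqref{eq:P1} at $m=nN+i$ and absorbing the factor $a_{(n+1)N+i-1}/a_{nN+i-1}\to 1$ yields the desired estimate for all such $n$. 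The finitely many remaining $n\geq 1$ are handled separately: the linear map $\alpha\mapsto(u_{nN+i-1},u_{nN+i})=\Phi_{nN+i-1}\alpha$ has nonzero determinant $a_0/a_{nN+i-1}$ and depends continuously on $x\in K$, so compactness of $K\times\sS^1$ together with positivity of $a_{nN+i-1}$ provides a two-sided bound on each of those indices; enlarging $c$ to dominate all constants concludes.

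The principal obstacle is extracting a uniform positive lower bound on $|S_{nN+i}|$ on $\sS^1\times K$, since Theorem~\ref{thm:1} supplies only qualitative convergence. The determinant identity for $M_{nN+i}(x)$, combined with the uniform non-vanishing of the limiting discriminant $h$ on $K$, supplies precisely the missing quantitative non-degeneracy of the limiting quadratic form, after which the corollary follows by standard manipulations.
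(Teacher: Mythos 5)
Your argument is correct, and its overall skeleton matches the intended one (Theorem~\ref{thm:1} combined with the argument of the cited reference): compare $a_{nN+i-1}\big(u_{nN+i-1}^2+u_{nN+i}^2\big)$ with $\abs{S_{nN+i}}$ through the uniform definiteness of $\sym\big(EX_{nN+i}\big)$ --- this is precisely \eqref{eq:35} together with the Cauchy--Schwarz upper bound --- and treat the finitely many small indices by compactness of $\sS^1\times K$. Where you genuinely deviate is the key non-degeneracy step, namely the uniform positive lower bound for $\abs{S_{nN+i}}$ on $\sS^1\times K$. The paper's route gets it from the ratio estimate \eqref{eq:36} established inside the proof of Theorem~\ref{thm:1}, which makes $\abs{S_{nN+i}}$ for large $n$ comparable to the fixed continuous function $\abs{S_{M N+i}}$, positive on the compact set because a nonzero generalized eigenvector never vanishes at two consecutive indices. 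You instead write $S_{nN+i}(\alpha,x)=\alpha^T M_{nN+i}(x)\alpha$ with $M_{nN+i}=a_{(n+1)N+i-1}\Phi_{nN+i-1}^T\sym\big(EX_{nN+i}\big)\Phi_{nN+i-1}$ and compute, by telescoping $\det B_j=a_{j-1}/a_j$, that $\det M_{nN+i}=\big(a_{(n+1)N+i-1}/a_{nN+i-1}\big)^2 a_0^2\big({-\tfrac14}\discr X_{nN+i}\big)$; the uniform negativity of the limiting discriminant together with the uniform bound $\sup_K\|M_{nN+i}\|=\sup_{\sS^1\times K}\abs{S_{nN+i}}<\infty$ (supplied by the uniform convergence in Theorem~\ref{thm:1}) then gives $\abs{S_{nN+i}(\alpha,x)}\ge \frac{\det M_{nN+i}(x)}{\|M_{nN+i}(x)\|}\|\alpha\|^2$. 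This is quantitative and self-contained: it bypasses both the ratio-Cauchy argument and the no-consecutive-zeros fact, using Theorem~\ref{thm:1} only through uniform boundedness, whereas the paper's argument reuses machinery already present in the proof of Theorem~\ref{thm:1}. Two cosmetic slips, neither affecting validity: the uniform definiteness of $\sym(EX_m)$ is available only along $m\equiv i \bmod N$ (which is all you actually use), and the uniform bound on $\|X_{nN+i}\|$ should be attributed to the definition of the class in \eqref{eq:31} rather than to \eqref{eq:2}, which is a hypothesis of Theorem~\ref{thm:4}.
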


\section{Approximation procedure}
\label{sec:5}
In this section we describe a method that allows us to prove a formula for the density of the measure $\mu$. It is
a further development of \cite{PeriodicI}, see also \cite{GeronimoVanAssche1991}.

Let $(p_n : n \in \NN_0)$ be a sequence of polynomials satisfying the following recurrence relation
\begin{equation*} 
	\begin{gathered} 
		p_0(x) = 1, \qquad p_1(x) = \frac{x - b_0}{a_0}, \\
		a_n p_{n+1}(x) + b_n p_n(x) + a_{n-1} p_{n-1}(x) = x p_n(x), \qquad n \geq 1.
	\end{gathered}
\end{equation*}
By $\mu$ we denote a probability measure on $\RR$ such that the polynomials $(p_n : n \in \NN_0)$ are orthonormal in
$L^2(\RR, \mu)$. Let
\begin{equation}
	\label{eq:135}
	\scrD_n(x) = p_n(x) p_{n+N-1}(x) - p_{n-1}(x) p_{n+N}(x) 
\end{equation}
that is
\begin{equation}
	\label{eq:44}
	\scrD_n(x) 
	= 
	a_{n+N-1}^{-1} S_n\bigg(\bigg(1, \frac{x-b_0}{a_0} \bigg), x\bigg) 
	=
	\bigg\langle
	E X_n(x)
	\begin{pmatrix}
		p_{n-1}(x) \\
		p_{n}(x)
	\end{pmatrix},
	\begin{pmatrix}
		p_{n-1}(x) \\
		p_{n}(x)
	\end{pmatrix}
	\bigg\rangle
\end{equation}
where $X_n$ and $E$ are defined in \eqref{eq:30}. Given $L \in \NN$, we consider the truncated sequences
$(a^L_n : n \in \NN_0)$ and $(b^L_n : n \in \NN_0)$ defined by
\begin{subequations}
	\begin{equation}
	\label{eq:42a}
	a^L_n = 
	\begin{cases}
		a_n & \text{if } 0 \leq n < L+N, \\
		a_{L+i} & \text{if } L+N \leq n, \text{ and } n-L \equiv i \bmod N,
	\end{cases}
	\end{equation}
	and
	\begin{equation}
	\label{eq:42b}
	b^L_n =
	\begin{cases}
		b_n & \text{if } 0 \leq n < L+N, \\
		b_{L+i} & \text{if } L+N \leq n, \text{ and } n-L \equiv i \bmod N,
	\end{cases}
	\end{equation}
\end{subequations}
where $i \in \{0, 1, \ldots, N-1\}$. Let $(\scrD_n^L : n \in \NN_0)$ be the sequence \eqref{eq:135} associated to the
polynomials $(p_n^L : n \geq 0)$ corresponding to the sequences $a^L$ and $b^L$. Then by \eqref{eq:44}
\[
	\scrD_n^L(x) = 
	\bigg\langle
	E X_n^L(x)
	\begin{pmatrix}
		p^L_{n-1}(x) \\
		p^L_{n}(x)
	\end{pmatrix},
	\begin{pmatrix}
		p^L_{n-1}(x) \\
		p^L_{n}(x)
	\end{pmatrix}
	\bigg\rangle,
\]
where
\[
	X_n^L(x) = \prod_{j=n}^{n+N-1}
	\begin{pmatrix}
		0 & 1\\
		-\frac{a_{j-1}^L}{a_j^L} & \frac{x - b_j^L}{a_j^L}
	\end{pmatrix}.
\]
Let $\mu_L$ be the measure orthonormalizing the polynomials $(p^L_n : n \in \NN_0)$.
\begin{proposition}
	\label{prop:2}
	Let $(L_j : j \in \NN)$ be an increasing sequence of positive integers. Let $\Lambda$ be a 
	non-empty open subset of
	\[
		\bigcup_{J=1}^\infty \bigcap_{j=J}^\infty \big\{ x \in \RR : \mu'_{L_j}(x) > 0 \big\}.
	\]
	Suppose that there is a positive function $f: \Lambda \rightarrow \RR$ such that for every 
	compact subset $K \subset \Lambda$, 
	\begin{equation} 
		\label{eq:46}
		\lim_{j \to \infty} \sup_{x \in K} \big| \mu'_{L_j}(x) - f(x) \big| = 0.
	\end{equation}
	Let $\nu$ be any weak accumulation point of the sequence $(\mu_{L_j} : j \in \NN)$. Then $\nu$ 
	is a probability measure such that $(p_n : n \in \NN_0)$ are orthonormal in $L^2(\RR, \nu)$, and
	\[
		\nu({\rm d} x) = f(x) {\: \rm d} x, \qquad x \in \Lambda.
	\]
	If the moment problem for $(p_n : n \in \NN_0)$ is determinate then the measure $\nu$ is unique.
\end{proposition}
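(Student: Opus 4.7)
The plan is to exploit the local nature of the truncation \eqref{eq:42a}--\eqref{eq:42b}, which modifies only $a_n, b_n$ with $n \geq L+N$, and pass the orthonormality identities through the limit. First, I would note that $p_n^L$ depends only on $a_0, \ldots, a_{n-1}, b_0, \ldots, b_{n-1}$; hence for every fixed $n$ there exists $j_0(n)$ with $p_n^{L_j} = p_n$ on $\RR$ for all $j \geq j_0(n)$. Likewise, each moment $\int x^k \, d\mu_{L_j} = \sprod{(A^{L_j})^k \delta_0}{\delta_0}_{\ell^2}$ is a polynomial expression in the entries $\{a_\ell^{L_j}, b_\ell^{L_j} : \ell \leq k\}$, so it stabilizes to a finite constant $M_k$ for $j$ sufficiently large.

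The second step is to show that $\nu$ is a probability measure on $\RR$ and that $(p_n)$ is orthonormal in $L^2(\RR, \nu)$. Stabilization of $M_2$ combined with Markov's inequality yields $\mu_{L_j}(\{|x| > R\}) \leq 2 M_2/R^2$ for all $j$ large, so $(\mu_{L_j})$ is tight, and any weak accumulation point $\nu$ has total mass one. To pass the identity $\int p_n^{L_j} p_m^{L_j} \, d\mu_{L_j} = \delta_{nm}$ to the limit, observe $p_n^{L_j} p_m^{L_j} = p_n p_m$ once $j \geq \max\{j_0(n), j_0(m)\}$. I would split $p_n p_m = p_n p_m \chi_R + p_n p_m (1 - \chi_R)$ with a continuous cutoff $\chi_R \in C_c(\RR)$ satisfying $\chi_R \equiv 1$ on $[-R,R]$. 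The bounded part converges to $\int p_n p_m \chi_R \, d\nu$ by weak convergence, while Cauchy--Schwarz combined with the stabilization of $\int x^{2(n+m+1)} \, d\mu_{L_j}$ controls the tail uniformly in $j$. Letting $R \to \infty$ yields $\int p_n p_m \, d\nu = \delta_{nm}$.

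The density identity on $\Lambda$ follows from the structure of the truncated matrices. Each $A^{L_j}$ is a finite-rank perturbation of an $N$-periodic Jacobi matrix, hence its spectral measure is purely absolutely continuous on the bands of its essential spectrum. The hypothesis $\mu'_{L_j} > 0$ on $\Lambda$ places $\Lambda$ inside this ac regime for $j$ large, so for any $\phi \in C_c(\Lambda)$ we have $\int \phi \, d\mu_{L_j} = \int \phi(x) \mu'_{L_j}(x) \, dx$. Uniform convergence $\mu'_{L_j} \to f$ on $\supp(\phi)$ together with weak convergence give $\int \phi \, d\nu = \int \phi f \, dx$, hence $\nu|_\Lambda = f(x)\, dx$. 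The uniqueness assertion under determinacy is then immediate: the orthonormality relations just established prescribe every moment of $\nu$.

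The main obstacle is the tightness and uniform tail estimate in the second step, which is especially delicate because the coefficients $a_n, b_n$ may be unbounded: weak convergence of probability measures on $\RR$ does not a priori accommodate testing against unbounded polynomials such as $p_n p_m$. The resolution rests entirely on the locality of the truncation, which forces every $M_k$ to stabilize and in turn supplies both the tightness bound and the uniform control of polynomial tails.
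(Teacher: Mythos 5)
Your proposal is correct, and its skeleton coincides with the paper's: stabilization of $p_n^{L_j}$ and of the moments (locality of the truncation), passage to the weak limit to obtain a probability measure in which $(p_n)$ are orthonormal, identification of the density on compact subsets of $\Lambda$ from the uniform convergence $\mu'_{L_j}\to f$, and determinacy for uniqueness. The two places where you diverge are worth noting. For the moment/orthonormality step the paper simply invokes the cited theorem of Fr\'echet (a weak limit of measures whose moments are eventually constant is a probability measure with all moments, and the moments converge), whereas you reprove this by hand via Markov-inequality tightness and a cutoff plus Cauchy--Schwarz tail estimate; this is more self-contained and equally valid, but you should record explicitly that $\nu$ inherits all moments (Fatou along the cutoff), so that $p_n p_m\in L^1(\RR,\nu)$ and the cutoff can be removed on the $\nu$-side. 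For the density step the paper asserts that $\mu_{L_{j_m}}$ is absolutely continuous on $K$ for large $m$ and then estimates $\big|\int g\,{\rm d}\mu_{L_{j_m}}-\int g f\,{\rm d}x\big|\le\int \abs{g}\,\abs{\mu'_{L_{j_m}}-f}\,{\rm d}x$; you justify that assertion by the eventually periodic structure of $A^{L_j}$, which is a genuine addition, since the abstract hypothesis $\mu'_{L_j}>0$ on $\Lambda$ by itself does not exclude a singular component there. Be slightly more careful at that point: pure absolute continuity holds on the \emph{interiors} of the spectral bands of $A^{L_j}$, so to place $\supp \phi$ in the a.c.\ region for all large $j$ you should use the uniform convergence $\mu'_{L_j}\to f>0$ on a slightly larger compact subset of $\Lambda$ containing $\supp\phi$ in its interior, which rules out gaps and band edges meeting $\supp\phi$; with that adjustment your argument is complete and matches the paper's conclusion.
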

\begin{proof}
    Let us observe that, by \eqref{eq:42a} and \eqref{eq:42b}, for each $n \in \NN_0$ there is 
    $J \geq 1$ such that for all $j \geq J$, 
    \begin{equation}
        \label{eq:48}
        p^{L_j}_n(x) = p_n(x).
    \end{equation}
    In particular, each subsequence of measures $(\mu_{L_j} : j \in \NN_0)$ has $n$th 
    moment eventually constant. Let $(\mu_{L_{j_m}} : m \in \NN_0)$ be a subsequence weakly 
    converging to a measure $\nu$. By \cite[Theorem, p. 540]{Frechet1931}, $\nu$ is a 
    probability measure having all moments. Moreover, for all $k \in \NN_0$,
    \[
        \lim_{m \to \infty} \int_\RR x^k \mu_{L_{j_m}}({\rm d} x) = \int_\RR x^k \nu({\rm d} x)
    \]
    which together with \eqref{eq:48}, proves that the polynomials 
    $(p_n : n \in \NN_0)$ are orthonormal in $L^2(\RR, \nu)$.

    Let $g$ be a continuous function with a support contained in 
    $K \subset \Lambda$. Then there is $M \geq 1$ such that for all $m \geq M$, the measure 
    $\mu_{L_{j_m}}$ is absolutely continuous on $K$. Hence,
    \begin{align*}
        \bigg| \int_\RR g(x) \mu_{L_{j_m}}({\rm d} x) - \int_\RR g(x) f(x) \ud x \bigg|
        &\leq
        \int_\RR \abs{g(x)} \big| \mu_{L_{j_m}}'(x) - f(x) \big| \ud x \\
        &\leq
        \abs{K} \cdot \sup_{x \in K} |g(x)| \cdot \sup_{x \in K} \big|\mu_{L_{j_m}}'(x) - f(x)\big|
    \end{align*}
    which, by \eqref{eq:46}, implies that
    \[
        \lim_{m \to \infty} \int_\RR g(x) \mu_{L_{j_m}}({\rm d} x) = \int_\RR g(x) f(x) \ud x.
    \]
    This completes the proof.
\end{proof}

\begin{proposition}
	\label{prop:4}
	For every $L \in \NN$ and $x \in \RR$, 
	\[
		\big|\scrD_L(x) - \scrD_{L+N}^L(x)\big| 
		\leq 
		\| X_L(x) \| 
		\left|\frac{a_{L+N-1}}{a_{L-1}} - 1 \right|
		\cdot \big( p_{L+N-1}^2(x) + p_{L+N}^2(x) \big).
	\]
\end{proposition}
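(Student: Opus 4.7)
The plan is to express both $\scrD_L$ and $\scrD_{L+N}^L$ as quadratic forms in the single vector $v_{L+N} = (p_{L+N-1}(x), p_{L+N}(x))^t$ and then bound the difference by the operator norm of the resulting matrix times $\|v_{L+N}\|^2$.

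Since $a_n^L = a_n$ and $b_n^L = b_n$ for $n < L+N$, the recurrences for $(p_n^L)$ and $(p_n)$ agree up to index $L+N$, so $p_n^L = p_n$ for all $n \leq L+N$. Applying \eqref{eq:44} to the periodized sequences directly gives
\[
	\scrD_{L+N}^L = \big\langle E X_{L+N}^L v_{L+N}, v_{L+N}\big\rangle.
\]
For $\scrD_L$, I would begin with $\scrD_L = \langle E X_L v_L, v_L\rangle$, substitute $v_L = X_L^{-1} v_{L+N}$, and apply \eqref{eq:12} in the form $(X_L^{-1})^t E = (\det X_L)^{-1} E X_L$. A telescoping product of $\det B_j = a_{j-1}/a_j$ evaluates $\det X_L = a_{L-1}/a_{L+N-1}$, whence
\[
	\scrD_L = \frac{a_{L+N-1}}{a_{L-1}} \big\langle E X_L v_{L+N}, v_{L+N}\big\rangle.
\]

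The central step is to compare $X_L$ with $X_{L+N}^L$. By the construction of $a^L, b^L$, the last $N-1$ one-step transfer matrices composing $X_{L+N}^L$ equal $B_{L+1}, \ldots, B_{L+N-1}$, and only the first block differs from $B_L$, with
\[
	B_{L+N}^L - B_L = \frac{a_{L-1} - a_{L+N-1}}{a_L}\begin{pmatrix} 0 & 0 \\ 1 & 0 \end{pmatrix}.
\]
The key algebraic observation is that the first column of $B_L$ equals $(0, -a_{L-1}/a_L)^t$, so
\[
	\begin{pmatrix} 0 & 0 \\ 1 & 0 \end{pmatrix} = -\frac{a_L}{a_{L-1}} B_L \begin{pmatrix} 1 & 0 \\ 0 & 0 \end{pmatrix}.
\]
Inserting this into $B_{L+N-1}\cdots B_{L+1}(B_{L+N}^L - B_L)$ collapses the rank-one correction into $X_L$ times a coordinate projection, yielding the clean identity
\[
	X_{L+N}^L = X_L + \Big(\frac{a_{L+N-1}}{a_{L-1}} - 1\Big) X_L \begin{pmatrix} 1 & 0 \\ 0 & 0 \end{pmatrix}.
\]

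Subtracting the two expressions for $\scrD_L$ and $\scrD_{L+N}^L$ and using the identity above gives
\[
	\scrD_L - \scrD_{L+N}^L = \Big(\frac{a_{L+N-1}}{a_{L-1}} - 1\Big) \Big\langle E X_L \begin{pmatrix} 0 & 0 \\ 0 & 1 \end{pmatrix} v_{L+N}, v_{L+N}\Big\rangle,
\]
and the standard bound $|\langle A v,v\rangle| \leq \|A\|\cdot\|v\|^2$, together with submultiplicativity and $\|E\| = \big\|\begin{pmatrix} 0 & 0 \\ 0 & 1 \end{pmatrix}\big\| = 1$, yields the claimed estimate. I expect the main obstacle to be this rank-one rewriting: the naive bound $\|X_{L+N}^L - X_L\| \leq \|B_{L+N-1}\cdots B_{L+1}\|\cdot\|B_{L+N}^L - B_L\|$ is too coarse to produce the factor $\|X_L\|$, and absorbing the truncation error into $X_L$ itself via the explicit form of $B_L$'s first column is what makes the inequality tight.
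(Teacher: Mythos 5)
Your proposal is correct and follows essentially the same route as the paper: both express $\scrD_L$ and $\scrD_{L+N}^L$ as quadratic forms in $(p_{L+N-1},p_{L+N})^t$, use \eqref{eq:12} together with $\det X_L = a_{L-1}/a_{L+N-1}$, and reduce the difference to $\big(\tfrac{a_{L+N-1}}{a_{L-1}}-1\big)X_L$ times a coordinate projection before applying Cauchy--Schwarz. Your rank-one rewriting of $B_{L+N}^L-B_L$ via the first column of $B_L$ is just an equivalent phrasing of the paper's computation of $\Id - B_L^{-1}\big(\begin{smallmatrix}0 & 1\\ -\frac{a_{L+N-1}}{a_L} & \frac{x-b_L}{a_L}\end{smallmatrix}\big)$, so there is no substantive difference.
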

\begin{proof}
	By \eqref{eq:44}, we can write
	\begin{align*}
		\scrD_L(x) = 
		\bigg\langle
		E X_L(x)
		\begin{pmatrix}
			p_{L-1}(x) \\
			p_L(x)
		\end{pmatrix},
		\begin{pmatrix}
			p_{L-1}(x) \\
			p_L(x)
		\end{pmatrix}
		\bigg\rangle
		&=
		\bigg\langle
		E
		\begin{pmatrix}
			p_{L+N-1}(x) \\
			p_{L+N}(x)
		\end{pmatrix},
		X_L^{-1}(x)
		\begin{pmatrix}
			p_{L+N-1}(x) \\
			p_{L+N}(x)
		\end{pmatrix}
		\bigg\rangle \\
		&=
		\bigg\langle
		\big( X_L^{-1}(x) \big)^t
		E
		\begin{pmatrix}
			p_{L+N-1}(x) \\
			p_{L+N}(x)
		\end{pmatrix},
		\begin{pmatrix}
			p_{L+N-1}(x) \\
			p_{L+N}(x)
		\end{pmatrix}
		\bigg\rangle,
	\end{align*}
	which, by \eqref{eq:12} equals to
	\[
		\frac{a_{L+N-1}}{a_{L-1}}
		\bigg\langle
		E
		X_L(x)
		\begin{pmatrix}
			p_{L+N-1}(x) \\
			p_{L+N}(x)
		\end{pmatrix},
		\begin{pmatrix}
			p_{L+N-1}(x) \\
			p_{L+N}(x)
		\end{pmatrix}
		\bigg\rangle.
	\]
	In view of \eqref{eq:42a} and \eqref{eq:42b}, we have
	\begin{equation}
		\label{eq:7}
		\scrD_L(x) - \scrD_{L+N}^L(x) =
		\bigg\langle
		E \bigg(\frac{a_{L+N-1}}{a_{L-1}} X_L(x) - X_{L+N}^L(x) \bigg)
		\begin{pmatrix}
			p_{L+N-1}(x) \\
			p_{L+N}(x)
		\end{pmatrix},
		\begin{pmatrix}
			p_{L+N-1}(x) \\
			p_{L+N}(x)
		\end{pmatrix}
		\bigg\rangle.	
	\end{equation}
	Moreover, since
	\[
		\Id - B_L^{-1}(x) 
		\begin{pmatrix}
			0 & 1 \\
			-\frac{a_{L+N-1}}{a_L} & \frac{x - b_L}{a_L}
		\end{pmatrix}
		=
		\begin{pmatrix}
			1 - \frac{a_{L+N-1}}{a_{L-1}} & 0 \\
			0 & 0
		\end{pmatrix},
	\]
	we obtain
	\begin{align*}
		X_L(x) - X^L_{L+N}(x) 
		&= X_L(x)
		\bigg(\Id - B_L^{-1}(x) 
		\begin{pmatrix}
			0 & 1 \\
			-\frac{a_{L+N-1}}{a_L} & \frac{x - b_L}{a_L}
		\end{pmatrix}	
		\bigg) \\
		&= X_L(x) 
		\begin{pmatrix}
			1 - \frac{a_{L+N-1}}{a_{L-1}} & 0 \\
			0 & 0
		\end{pmatrix}.
	\end{align*}
	Hence,
	\[
		\frac{a_{L+N-1}}{a_{L-1}} X_L(x) - X_{L+N}^L(x) 
		=
		X_L(x)
		\begin{pmatrix}
			0 & 0 \\
			0 & \frac{a_{L+N-1}}{a_{L-1}} - 1
		\end{pmatrix},
	\]
	which together with \eqref{eq:7} concludes the proof.
\end{proof}

\begin{corollary}
	\label{cor:3}
	For all $x \in \RR$ and $L \in \NN$, 
	\[
		\big\|
		X_L(x) - X_{L+N}^L(x)
		\big\|
		\leq
		\|X_L(x) \| \cdot \bigg|\frac{a_{L+N-1}}{a_{L-1}} - 1\bigg|.
	\]
\end{corollary}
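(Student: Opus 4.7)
The plan is to observe that Corollary~\ref{cor:3} is essentially an immediate byproduct of the identity already derived mid-proof of Proposition~\ref{prop:4}. There, it was shown that
\[
	X_L(x) - X^L_{L+N}(x) = X_L(x) \begin{pmatrix} 1 - \frac{a_{L+N-1}}{a_{L-1}} & 0 \\ 0 & 0 \end{pmatrix},
\]
and more directly the rescaled version
\[
	\frac{a_{L+N-1}}{a_{L-1}} X_L(x) - X^L_{L+N}(x) = X_L(x) \begin{pmatrix} 0 & 0 \\ 0 & \frac{a_{L+N-1}}{a_{L-1}} - 1 \end{pmatrix}.
\]
From the first of these equalities, I would simply take operator norms on both sides. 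The rank-one diagonal factor on the right has operator norm exactly $\bigl|1 - \frac{a_{L+N-1}}{a_{L-1}}\bigr|$, and submultiplicativity of the matrix norm then yields
\[
	\|X_L(x) - X^L_{L+N}(x)\| \leq \|X_L(x)\| \cdot \left|\frac{a_{L+N-1}}{a_{L-1}} - 1\right|,
\]
which is the claim.

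There is no real obstacle here; the algebraic identity was the only nontrivial step, and it has been done. The proof of Corollary~\ref{cor:3} is therefore just one line (invoking the factorization from the proof of Proposition~\ref{prop:4} and applying submultiplicativity). If one prefers not to reach inside the proof of Proposition~\ref{prop:4}, an alternative is to repeat the short computation: note that $X^L_{L+N}$ and $X_L$ agree except in the $(1,0)$-entry of their leading factor, where $-a_{L-1}/a_L$ is replaced by $-a_{L+N-1}/a_L$; factoring $X_L$ out on the left leaves a diagonal matrix whose norm is $\bigl|1-\frac{a_{L+N-1}}{a_{L-1}}\bigr|$, and the same bound follows.
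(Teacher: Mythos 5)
Your proof is correct and follows exactly the route the paper intends: Corollary~\ref{cor:3} is read off from the factorization $X_L(x) - X^L_{L+N}(x) = X_L(x)\operatorname{diag}\bigl(1 - \tfrac{a_{L+N-1}}{a_{L-1}}, 0\bigr)$ established in the proof of Proposition~\ref{prop:4}, followed by submultiplicativity of the norm. Nothing further is needed.
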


The next theorem is the main result in this section.
\begin{theorem}
	\label{thm:6}
	Let $N$ and $r$ be positive integers. Let $(L_j : j \in \NN)$ be an increasing sequence of
	positive integers. Let $K$ be a compact subset of 
	\[
		\Lambda = \Big\{x \in \RR : \lim_{j \to \infty} \discr \big( X_{L_j}(x) \big) 
		\text{ exists and is negative} \Big\}.
	\]
	Assume that
	\begin{equation}
		\label{eq:50}
		\lim_{j \to \infty} \frac{a_{L_j+N-1}}{a_{L_j-1}} = 1.
	\end{equation}
	Suppose that there is a positive function $g: K \rightarrow \RR$ such that 
	\begin{equation}
		\label{eq:51}
		\lim_{j \to \infty} \sup_{x \in K} \Big| a_{L_j+N-1} \big| \scrD_{L_j}(x) \big| - g(x) \Big| = 0,
	\end{equation}
	and 
	\begin{equation}
		\label{eq:52}
		\sup_{j \in \NN} \sup_{x \in K} \| X_{L_j}(x) \| < \infty.
	\end{equation}
	Let $\nu$ be any weak accumulation point of the sequence $(\mu_{L_j} : j \in \NN)$. 
	Then $\nu$ is a probability measure such that $(p_n : n \in \NN_0)$ are orthogonal in 
	$L^2(\RR, \nu)$, which is absolutely continuous on $K$ with the density
	\[
		\nu'(x) = \frac{\sqrt{-h(x)}}{2 \pi g(x)}, \qquad x \in K
	\]
	where
	\begin{equation}
		\label{eq:17}
		h(x) = \lim_{j \to \infty} \discr \big( X_{L_j}(x) \big), \qquad x \in K.
	\end{equation}
\end{theorem}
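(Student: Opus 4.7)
The plan is to exploit that the truncated sequences $(a_n^L, b_n^L)$ define an eventually $N$-periodic Jacobi matrix, for which the absolutely continuous density of the orthogonality measure is known explicitly. Specifically, where $\discr\bigl(X^L_{L+N}(x)\bigr) < 0$,
$$
\mu_L'(x) = \frac{\sqrt{-\discr\bigl(X^L_{L+N}(x)\bigr)}}{2\pi\, a_{L+N-1}\,\bigl|\scrD^L_{L+N}(x)\bigr|},
$$
a formula available from the existing theory of $N$-periodic Jacobi matrices (cf.\ \cite{PeriodicI,GeronimoVanAssche1991}). Granting this, the problem reduces to showing that the right-hand side evaluated along $L_j$ converges uniformly on $K$ to $\sqrt{-h(x)}/(2\pi g(x))$, after which Proposition~\ref{prop:2} identifies any weak limit $\nu$ as the claimed probability measure.

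For the numerator, each entry of $X_n(x)$ is a polynomial in $x$ of degree at most $N$, so pointwise convergence of the discriminants on $\Lambda$ automatically upgrades to uniform convergence on any compact subset, with $h$ itself a polynomial. Combining Corollary~\ref{cor:3} with \eqref{eq:50} and \eqref{eq:52} gives $\sup_{K}\|X^{L_j}_{L_j+N}-X_{L_j}\| = o(1)$; since $\det X_{L_j} = a_{L_j-1}/a_{L_j+N-1} \to 1$ while $\det X^{L_j}_{L_j+N} = 1$, continuity of $\discr$ yields $\discr\bigl(X^{L_j}_{L_j+N}\bigr) \to h$ uniformly on $K$. As $h<0$ on the compact $K$, we obtain $\sqrt{-\discr(X^{L_j}_{L_j+N})} \to \sqrt{-h}$ uniformly and that $K$ lies in the a.c.\ spectrum of $\mu_{L_j}$ for all large~$j$.

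For the denominator I would first derive a quantitative lower bound on $|\scrD_{L_j}|$. Writing $\scrD_{L_j}(x) = \langle E X_{L_j}(x) v, v \rangle$ with $v = \bigl(p_{L_j-1}(x), p_{L_j}(x)\bigr)^T$ as in \eqref{eq:44}, and using $\det\sym(E X_{L_j}) = -\tfrac14 \discr(X_{L_j})$, for $j$ large the symmetric form $\sym(E X_{L_j})$ is definite on $K$ with determinant bounded away from zero; this gives $|\scrD_{L_j}(x)|\geq c\bigl(p_{L_j-1}^2+p_{L_j}^2\bigr)$. Invoking \eqref{eq:52} and $X_{L_j} v = \bigl(p_{L_j+N-1}, p_{L_j+N}\bigr)^T$, we deduce $a_{L_j+N-1}(p_{L_j+N-1}^2+p_{L_j+N}^2) \leq C\, a_{L_j+N-1}|\scrD_{L_j}|$, uniformly bounded on $K$ by \eqref{eq:51}. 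Proposition~\ref{prop:4} together with \eqref{eq:50} then yields
$$
a_{L_j+N-1}\bigl|\scrD_{L_j}-\scrD^{L_j}_{L_j+N}\bigr| \leq \|X_{L_j}\|\Bigl|\tfrac{a_{L_j+N-1}}{a_{L_j-1}}-1\Bigr|\,a_{L_j+N-1}\bigl(p_{L_j+N-1}^2+p_{L_j+N}^2\bigr) \in o_K(1),
$$
so combining with \eqref{eq:51} gives $a_{L_j+N-1}|\scrD^{L_j}_{L_j+N}| \to g$ uniformly on $K$.

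Combining the last two paragraphs, $\mu'_{L_j} \to \sqrt{-h}/(2\pi g)$ uniformly on $K$ with positive limit, and Proposition~\ref{prop:2} (applied to $\mathrm{int}\,K$, the boundary being insignificant in the limit) identifies $\nu$ as claimed. The main hurdle I expect is pinning down the explicit periodic-case density formula with the correct normalizing constants; the subtlest technical step thereafter is the lower bound $|\scrD_{L_j}|\geq c(p_{L_j-1}^2+p_{L_j}^2)$ and the ensuing uniform control of $a_{L_j+N-1}(p_{L_j+N-1}^2+p_{L_j+N}^2)$, which has to be produced from the bare hypotheses of the theorem since the Stolz-class framework of Section~\ref{sec:1} is not in force here.
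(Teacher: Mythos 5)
Your proposal is correct and follows essentially the same route as the paper: quote the periodic-case density formula for the truncated measures $\mu_{L_j}$, use Corollary~\ref{cor:3} with \eqref{eq:50} and \eqref{eq:52} to pass the discriminants to $h$ uniformly on $K$, use the definiteness of $\sym\big(EX_{L_j}\big)$ to get the lower bound on $|\scrD_{L_j}|$ and then Proposition~\ref{prop:4} to transfer \eqref{eq:51} to the truncated Tur\'an determinants, and finish with Proposition~\ref{prop:2}. The only difference is bookkeeping: the paper obtains the denominator of $\mu_{L_j}'$ as $\lim_k |S^{L}_{L+kN}|$ from the cited periodic results and then identifies it with $|S^{L}_{L+N}|$ via Proposition~\ref{prop:12} (eventual constancy in the periodic tail), a short step your direct quotation of the formula implicitly subsumes and which you correctly flag as the constant-pinning hurdle.
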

\begin{proof}
	For a positive integer $L$ such that $L \in \{ L_j : j \in \NN \}$, we set
	\[
		\Lambda_L = \Big\{ x \in \RR: \discr\big(X^L_{L+N}(x)\big) < 0 \Big\},
	\]
	and
	\[
		S^L_n(x) = a^L_{n+N-1} \scrD^L_{n}(x), \qquad n \geq 1.
	\]
	In view of \cite[Theorem 3]{PeriodicII}, (see also \cite[Theorem 6]{GeronimoVanAssche1991}), 
	for each $x \in \Lambda_L$,
	\[
		\lim_{k \to \infty} \big|S^L_{L+kN}(x)\big| \quad \text{exists}
	\]
	and defines a positive continuous function $g^L: \Lambda_L \rightarrow \RR$. Moreover, 
	the measure $\mu_L$ is absolutely continuous on $\Lambda_L$ with the density
	\begin{equation}
		\label{eq:55}
		\mu'_L(x) = \frac{\sqrt{-\discr\big(X^L_{L+N}(x)\big)}}{2 \pi g^L(x)}.
	\end{equation}
	By Proposition \ref{prop:12}, we have
	\begin{align*}
		&\big| S^L_{n+N}(x) - S^L_{n}(x) \big| \\
		&\qquad
		\leq 
		a_{n+N-1}^L
		\Big( \big( p^L_{n+N-1}(x) \big)^2 + \big( p^L_{n+N-1}(x) \big)^2 \Big)
		\bigg\|
			\frac{a^L_{n+2N-1}}{a^L_{n+N-1}} X^L_{n+N}(x) -
			\frac{a^L_{n+N-1}}{a^L_{n-1}} X^L_{n}(x)
		\bigg\|.
	\end{align*}
	Hence, by \eqref{eq:42a} and \eqref{eq:42b}, we conclude that $S^L_{n+N}(x) = S^L_n(x)$ for all 
	$n \geq L+1$. Thus, for all $x \in \Lambda_L$,
	\begin{equation}
		\label{eq:11}
		g^L(x) = \big| S^L_{L+N}(x) \big|.
	\end{equation}
	Next, let us observe that there is $c > 0$ such that for all $A, B \in \Mat(2, \RR)$,
	\[
		| \discr A - \discr B | \leq c (\|A\|+\|B\|)\|A - B\|.
	\]
	Therefore, by Corollary \ref{cor:3}, we obtain 
	\begin{equation}
		\label{eq:34}
		\Big| \discr\big(X_{L+N}^L(x)\big) - \discr\big(X_L(x)\big) \Big|
		\leq
		c \left\|X_L(x) \right\|^2 \bigg|\frac{a_{L+N-1}}{a_{L-1}} - 1 \bigg|.
	\end{equation}
	Let us fix a compact subset $K \subset \Lambda$. Since $\discr \big( X_L(x) \big)$ is 
	a polynomial of degree at most $2N$, the convergence in \eqref{eq:17} is uniform on $K$. 
	Thus, by \eqref{eq:50}, \eqref{eq:52}, and \eqref{eq:34} we get
	\begin{equation}
		\label{eq:39}
		\lim_{j \to \infty}
		\sup_{x \in K}
		\Big| \discr \Big( X_{L_j+N}^{L_j}(x) \Big) - h(x) \Big| = 0.
	\end{equation}
	In particular, $K \subset \Lambda_L$ for all $L$ sufficiently large. Now, setting
	\[
		S_n(x) = a_{n+N-1} \scrD_n(x),
	\]
	by Proposition \ref{prop:4}, we get
	\begin{align*}
		\big| S^L_{L+N}(x) - S_L(x) \big| 
		&= 
		a_{L+N-1} \big|\scrD^L_{L+N}(x) - \scrD_L(x)\big| \\
		&\leq
		a_{L+N-1} \big( p_{L+N-1}^2(x) + p_{L+N}^2(x) \big)
		\| X_L(x) \| \left|\frac{a_{L+N-1}}{a_{L-1}} - 1 \right|.
	\end{align*}
	Since $K$ is a compact subset of $\Lambda$, there is $L'$ such that for all $L \geq L'$ and 
	$x \in K$ we have
	\[
		\det \Big( \sym \big( E X_L(x) \big) \Big) = - \frac{1}{4} \discr \big( X_L(x) \big) > 0,
	\]
	which together with \eqref{eq:52} implies that there are $c > 0$ and $L' \geq 1$ such that for 
	all $L \geq L'$ and $x \in K$,
	\[
		|S_L(x)| \geq c^{-1} a_{L+N-1} \big( p_{L+N-1}^2(x) + p_{L+N}^2(x) \big).
	\]
	Hence,
	\[
		\big|S^L_{L+N}(x) - S_L(x)\big| 
		\leq c 
		|S_L(x)| \cdot \| X_L(x) \| \left|\frac{a_{L+N-1}}{a_{L-1}} - 1 \right|,
	\]
	which, by \eqref{eq:50}, \eqref{eq:51} and \eqref{eq:11}, gives
	\begin{equation}
		\label{eq:57}
		\lim_{j \to \infty}
		\sup_{x \in K} \big| g^{L_j}(x) - g(x) \big| = 0.
	\end{equation}
	Finally, by \eqref{eq:55}, \eqref{eq:39}, and \eqref{eq:57} we obtain
	\[
		\lim_{j \to \infty}
		\sup_{x \in K} \bigg| \mu'_{L_j}(x) - \frac{\sqrt{-h(x)}}{2\pi g(x)} \bigg| = 0,
	\]
	and the theorem follows by Proposition \ref{prop:2}.
\end{proof}

\begin{corollary} \label{cor:9}
Let the hypotheses of Theorem~\ref{thm:1} be satisfied. Then there is a positive function
$g: K \rightarrow \RR$, such that
\begin{equation} \label{eq:154}
	\lim_{\stackrel{n \to \infty}{n \equiv i \bmod N}} 
	\sup_{x \in K}
	\Big|
	a_{n+N-1} \big|\scrD_n (x) \big| - g(x)
	\Big|
	=0.
\end{equation}
Moreover, there is a probability measure $\nu$ such that $(p_n : n \in \NN_0)$ are orthonormal 
in $L^2(\RR, \nu)$, which is absolutely continuous on $K$ with the density
\begin{equation} \label{eq:155}
	\nu'(x) = \frac{\sqrt{-h(x)}}{2\pi g(x)}, \qquad x \in K,
\end{equation}
where
\[
	h(x) = \lim_{\stackrel{n \to \infty}{n \equiv i \bmod N}} \discr \big( X_{n}(x) \big), 
	\qquad x \in K.
\]
Furthermore, 
\[
	\lim_{k \to \infty} \sup_{x \in K} |\nu'(x) - \mu_{kN+i}'(x)| = 0,
\]
where $\mu_L$ is the sequence orthonormalizing the sequence $(p_n^L : n \in \NN_0)$.
If the moment problem for $(p_n : n \in \NN_0)$ is determinate, then the measure $\nu$
is unique.
\end{corollary}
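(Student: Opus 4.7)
The plan is to combine Theorem~\ref{thm:1} with Theorem~\ref{thm:6}, applied along the arithmetic progression $L_j = jN + i$. First, I would specialize Theorem~\ref{thm:1} to the initial condition associated with the orthonormal polynomials, namely $\alpha(x) = (1, (x-b_0)/a_0)$, normalized to the unit sphere by dividing by the continuous positive factor $\sqrt{1 + ((x-b_0)/a_0)^2}$. The uniform convergence of $S_{kN+i}$ on $\sS^1 \times K$ then translates into uniform convergence of $a_{n+N-1} |\scrD_n(x)|$ on $K$ along $n \equiv i \bmod N$ to some continuous limit $g(x)$, yielding \eqref{eq:154}. Positivity of $g$ is built into the proof of Theorem~\ref{thm:1}: by \eqref{eq:35} (equivalently, by Corollary~\ref{cor:8}) we have a uniform lower bound $|S_n^{(i)}(\alpha, x)| \geq c (u_0^2 + u_1^2)$, and for the orthonormal polynomial initial condition $u_0^2 + u_1^2 \geq 1$.

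Next, I would apply Theorem~\ref{thm:6} to the subsequence $L_j = jN + i$. Its three hypotheses are inherited from those of Theorem~\ref{thm:1}: the ratio condition \eqref{eq:50} is identical to \eqref{eq:33}; the uniform bound \eqref{eq:52} follows since membership in $\calD_{r, 0}\bigl(K, \GL(2, \RR)\bigr)$ includes uniform boundedness by definition; and the uniform convergence \eqref{eq:51} to a positive $g$ is exactly what step one delivers. Since the limiting discriminant $h$ defining $\Lambda$ is the same in both statements, we have $K \subset \Lambda$ in the sense required by Theorem~\ref{thm:6}. Invoking Theorem~\ref{thm:6}, any weak accumulation point $\nu$ of the sequence $(\mu_{L_j})$ is a probability measure orthonormalizing $(p_n)$ and absolutely continuous on $K$ with density \eqref{eq:155}. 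Existence of such an accumulation point is standard (Helly's selection combined with the eventual constancy of each moment of $\mu_{L_j}$ along the subsequence, as used in the proof of Proposition~\ref{prop:2}).

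The uniform convergence $\mu'_{L_j} \to \nu'$ on $K$ is not an a~posteriori consequence of weak convergence — rather, it is an intermediate step in the proof of Theorem~\ref{thm:6}: the densities $\mu'_{L_j}$ are shown there to converge uniformly on $K$ to $\sqrt{-h}/(2\pi g)$ via \eqref{eq:39} and \eqref{eq:57}, \emph{before} any weak accumulation is extracted. So we simply record this conclusion. Finally, uniqueness of $\nu$ under determinacy is precisely the last sentence of Proposition~\ref{prop:2}.

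I do not anticipate any serious obstacle: the corollary is essentially a packaging of Theorems~\ref{thm:1} and~\ref{thm:6} along the arithmetic progression $jN + i$, with the initial condition specialized to the orthonormal polynomials. The only mildly technical verification is that the $\sS^1$-normalization in the first step is compatible with the uniform convergence on $K$, which is immediate from continuity and the uniform lower bound $\sqrt{1 + ((x-b_0)/a_0)^2} \geq 1$ on the normalizing factor.
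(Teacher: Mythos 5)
Your proposal is correct and follows essentially the same route as the paper: translate the Tur\'an determinants via \eqref{eq:44} so that Theorem~\ref{thm:1} yields \eqref{eq:154}, then apply Theorem~\ref{thm:6} along $L_j = jN+i$ (with \eqref{eq:52} coming from the uniform boundedness built into $\calD_{r,0}$) and obtain uniqueness from Proposition~\ref{prop:2}. Your extra remarks on the $\sS^1$-normalization, the positivity of $g$ via \eqref{eq:35}, and the uniform convergence of the densities inside the proof of Theorem~\ref{thm:6} are accurate elaborations of points the paper leaves implicit.
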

\begin{proof}
For $j \geq 1$ set $L_j = j N + i$. In view of \eqref{eq:135} and \eqref{eq:44}, Theorem~\ref{thm:1} gives
\eqref{eq:154}.

Since $(X_{nN+i} : n \in \NN)$ belongs to $\calD_{r, 0}(K, GL(2, \RR))$ it is uniformly bounded. Hence,
Theorem~\ref{thm:6} gives \eqref{eq:155}. The assertion of uniqueness of $\nu$ follows from
Proposition~\ref{prop:2}. The proof is complete.
\end{proof}

\section{The exact asymptotic of orthogonal polynomials} 
\label{sec:6}
Let $N$ be a positive integer and $i \in \{0, 1, \ldots, N-1\}$. In this section we prove an asymptotic formula for
orthonormal polynomials $(p_{kN+i} : k \in \NN_0)$ under the condition that $(X_{k N + i} : k \in \NN_0)$ belongs to
the Stolz class $\calD_{r, 0}\big(K, \GL(2, \RR)\big)$ for some compact set $K$. The following 
theorem is a generalization of \cite[Theorem 4]{GeronimoVanAssche1991}.
\begin{theorem} 
	\label{thm:2}
	Let $N$ and $r$ be positive integers and $i \in \{0, 1, \ldots, N-1\}$. Let $K$ be a compact subset of
	\[
		\Lambda = \Big\{x \in \RR : 
		\lim_{k \to \infty} \discr\big(X_{kN+i}(x)\big) \text{ exists and is negative}
		\Big\}.
	\]
	If
	\[
		\lim_{k \to \infty} \frac{a_{(k+1)N+i-1}}{a_{kN+i-1}} = 1,
	\]
	and
	\[
		(X_{kN+i} : k \in \NN) \in \calD_{r, 0}\big(K, \GL(2, \RR) \big),
	\]
	then there are continuous functions $t_j : K \rightarrow \CC$ and $\vphi: K \rightarrow \CC$ such that
	\[
		\inf_{j \in \NN} \inf_{x \in K} \abs{t_j(x)} > 0
	\]
	and
	\[
		\lim_{k \to \infty} 
		\sup_{x \in K}
		\bigg| 
		\Big(p_{(k+1) N+i}(x) -\overline{\lambda_{kN+i}(x)} p_{kN+i}(x) \Big) 
		\prod_{j=1}^k \frac{1}{t_j(x)} - \vphi(x) \bigg| = 0.
	\]
	Moreover,
	\begin{equation} 
		\label{eq:143}
		\lim_{j \to \infty} \sup_{K} \big|t_j - \lambda_{jN+i} \big| = 0
	\end{equation}
	where $\lambda_n$ is defined in \eqref{eq:153}.
\end{theorem}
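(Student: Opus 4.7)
The plan is to diagonalize the $N$-step transfer matrices $Y_k = X_{kN+i}$ uniformly on $K$ and to track a single complex eigen-mode through the iteration. First I would apply Theorem~\ref{thm:5} to $(Y_k)$ to obtain $M \geq 1$ and continuous mappings $Q_k, D_k, C_k$ on $K$ such that, for $k \geq M$,
\[
  Y_k = Q_k D_k C_k^{-1} C_{k-1} Q_{k-1}^{-1},
\]
with $Q_k \to Q_\infty$, $C_k \to \Id$, and $D_k = \diag(t_k, \overline{t_k})$ where $\sup_K |t_k - \lambda_{kN+i}| \to 0$ by \eqref{eq:152}. The limit $Q_\infty$ has columns $(1, \lambda_\infty)^T$ and $(1, \overline{\lambda_\infty})^T$, with $\lambda_\infty = \lim \lambda_{kN+i}$ and $|\lambda_\infty|^2 = \lim \det X_{kN+i} = 1$ by the hypothesis on $a$-ratios. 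I would extend $t_j$ for $j \leq M$ to any continuous nonvanishing functions, so that $\prod_{j=1}^k t_j$ equals $\Lambda_k := \prod_{j=M+1}^k t_j$ times a fixed nonvanishing continuous factor, and so $\inf_{j, x}|t_j| > 0$.

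Second, set $v_k = (p_{kN+i-1}, p_{kN+i})^T$ and $u_k = Q_{k-1}^{-1} v_k$. Since $v_{k+1} = Y_k v_k$, one has $u_{k+1} = D_k C_k^{-1} C_{k-1} u_k$. Introduce $T_k = \diag(\Lambda_k, \overline{\Lambda_k})$ and $w_k = T_{k-1}^{-1} u_k$, so that
\[
  w_{k+1} = T_{k-1}^{-1} C_k^{-1} C_{k-1} T_{k-1} w_k.
\]
Because $T_{k-1}$ is diagonal with both entries of the same modulus, its conjugation action preserves the operator norm, giving $\|T_{k-1}^{-1} C_k^{-1} C_{k-1} T_{k-1} - \Id\| \leq c\|\Delta C_{k-1}\|$. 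By Theorem~\ref{thm:5}, $(C_k) \in \calD_{1, 0}(K, \GL(2, \CC))$, hence $\sum_k \sup_K \|\Delta C_{k-1}\| < \infty$. A standard argument combining a telescoping bound with Gronwall's inequality then shows $(w_k)$ is a uniform Cauchy sequence on $K$, converging to a continuous limit $w_\infty: K \to \CC^2$.

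Third, to recover the target quantity, write $v_{k+1} = Q_k T_k w_{k+1}$, $v_k = Q_{k-1} T_{k-1} w_k$, and use $T_k = D_k T_{k-1}$ to expand the second component:
\begin{align*}
  p_{(k+1)N+i} - \overline{\lambda_k} p_{kN+i}
  &= \Lambda_{k-1}\big\{[Q_k]_{2, 1} t_k [w_{k+1}]_1 - \overline{\lambda_k}[Q_{k-1}]_{2, 1}[w_k]_1\big\} \\
  &\phantom{{}={}} + \overline{\Lambda_{k-1}}\big\{[Q_k]_{2, 2} \overline{t_k} [w_{k+1}]_2 - \overline{\lambda_k}[Q_{k-1}]_{2, 2}[w_k]_2\big\}.
\end{align*}
Dividing by $\Lambda_k = \Lambda_{k-1} t_k$, the first brace converges uniformly to $\lambda_\infty(\lambda_\infty - \overline{\lambda_\infty})[w_\infty]_1$, so its contribution tends to $(\lambda_\infty - \overline{\lambda_\infty})[w_\infty]_1$. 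The second brace converges uniformly to $0$, while its prefactor $\overline{\Lambda_{k-1}}/\Lambda_k$ has modulus $|t_k|^{-1}$ and is uniformly bounded; that contribution therefore vanishes. Absorbing the fixed factor $\prod_{j=1}^M t_j$ yields $\varphi = \prod_{j=1}^M t_j \cdot (\lambda_\infty - \overline{\lambda_\infty})[w_\infty]_1$, completing the argument.

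The main obstacle is the uniform vanishing of the wrong-mode term. Since the prefactor $\overline{\Lambda_{k-1}}/\Lambda_k$ does not decay in modulus, the vanishing must come entirely from the simultaneous uniform smallness of three independent differences: $\overline{t_k} - \overline{\lambda_k}$, $[Q_k]_{2, 2} - [Q_{k-1}]_{2, 2}$, and $[w_{k+1}]_2 - [w_k]_2$. These are controlled respectively by \eqref{eq:152}, the uniform Cauchy property of $(Q_k)$ furnished by Theorem~\ref{thm:5}, and the Gronwall/telescoping estimate of the second step; the delicate point is that each of these uniform estimates genuinely requires the full Stolz-class structure propagated through the iterated diagonalization.
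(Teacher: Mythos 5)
Your argument is correct, and it reaches the conclusion by a genuinely different route than the paper, although both rest on the uniform diagonalization of Theorem \ref{thm:5}. The paper compares $p_{kN+i}$ with an auxiliary sequence $q_k$ built from the frozen limit $Q_\infty$ and the pure diagonal product (formula \eqref{eq:49}), controls the error via Proposition \ref{prop:1} (estimates \eqref{eq:21}--\eqref{eq:22}) together with the growth bound of Claim \ref{clm:2}, after choosing $L$ so that the tails $\sum_{j\geq L}\sup_K\|\Delta C_j\|$ and $\sup_{k\geq L}\sup_K\|Q_k-Q_\infty\|$ are below $\epsilon$, and then shows that the corresponding $\psi_k$ converge by an exact diagonal computation in which the wrong mode carries the vanishing factor $\overline{\gamma_k}-\overline{\lambda_{kN+i}}$ times a unimodular product. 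You instead make the exact change of variables $w_k=T_{k-1}^{-1}Q_{k-1}^{-1}(p_{kN+i-1},p_{kN+i})^{T}$ with $T_k=\diag(\Lambda_k,\overline{\Lambda_k})$, note that conjugation by $T_{k-1}$ is norm-preserving because its diagonal entries are complex conjugates, and deduce from $\sum_k\sup_K\|\Delta C_k\|<\infty$ (Definition \ref{def:1}) that $(w_k)$ is uniformly Cauchy; the asymptotics then falls out of the two-mode decomposition, the wrong mode being annihilated by exactly the same three uniform smallnesses the paper exploits ($\overline{t_k}-\overline{\lambda_{kN+i}}$, $\Delta Q_{k-1}$, $\Delta w_k$), with the prefactor $\overline{\Lambda_{k-1}}/\Lambda_k$ only needed to be bounded. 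Your route avoids the $\epsilon$-truncation bookkeeping, the auxiliary $q_k$ and the growth estimate of Claim \ref{clm:2} altogether, and yields $\vphi$ explicitly in terms of $w_\infty$; the paper's heavier set-up pays off later, since $q_k$, $\psi_k$ and Claim \ref{clm:2} are reused in the proof of Theorem \ref{thm:3}. Two cosmetic points: like the paper, you use the conjugate-pair form $D_k=\diag(t_k,\overline{t_k})$ and the explicit shape of $Q_\infty$, which come from the construction inside Theorem \ref{thm:5} rather than from its bare statement (harmless here, and the shape of $Q_\infty$ is not even needed, since only the existence of $\vphi$ is asserted); and in your final identification the fixed factor should divide rather than multiply, $\vphi=\big(\prod_{j=1}^{M}t_j\big)^{-1}(\lambda_\infty-\overline{\lambda_\infty})[w_\infty]_1$, which is moot if you take $t_j\equiv 1$ for $j\leq M$ as the paper does.
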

\begin{proof}
	By Theorem \ref{thm:5}, the sequence $(X_{kN+i} : k \in \NN_0)$ is uniformly diagonalizable on $K$.
	Let $(D_k : k \geq M)$ be the corresponding sequence of diagonal matrices where
	\[
		D_k = 
		\begin{pmatrix}
			\gamma_k & 0 \\
			0 & \overline{\gamma_k}
		\end{pmatrix}.
	\]
	We define
	\[
		t_k = 
		\begin{cases}
			1 & \text{if } k \leq M, \\
			\gamma_{k} & \text{otherwise.}
		\end{cases}
	\]
	Then \eqref{eq:143} easily follows from \eqref{eq:152}. Let us notice that there is $\delta > 0$, such that for all
	$j \geq 1$ and $x \in K$, $\abs{t_j(x)} \geq \delta$. For $x \in K$, we set
	\[
		\phi_k(x) = \frac{p_{(k+1)N+i}(x) - \overline{\lambda_{kN+i}} p_{kN+i}}
		{\prod_{j = M+1}^{k} t_j(x)}.
	\]
	Given $\epsilon > 0$, we select $L > M$ such that
	\begin{equation}
		\label{eq:45}
		\sum_{k = L-1}^\infty \sup_{x \in K} \big\|\Delta C_k(x) \big\| < \epsilon,
	\end{equation}
	and
	\begin{equation}
		\label{eq:47}
		\sup_{k \geq L-1} \sup_{x \in K} \big\| Q_\infty(x) - Q_k(x) \big\| < \epsilon.
	\end{equation}
	First, we replace the polynomials $(p_{kN+i} : k \geq L)$ by the sequence of functions $(q_k : k \geq L)$, where
	\begin{equation}
		\label{eq:49}
		q_k(x) = 
		\bigg\langle
    	Q_{\infty} \Big( \prod_{j=L}^{k-1} D_j \Big) Q_{L-1}^{-1}
	    \begin{pmatrix}
			p_{LN+i-1} \\
			p_{LN+i}
		\end{pmatrix},
		\begin{pmatrix}
		    0 \\
		    1
		\end{pmatrix}
		\bigg\rangle.
	\end{equation}
	For $x \in K$ we set
	\[
		\psi_k(x) = \frac{q_{k+1}(x) - \overline{\lambda_{kN+i}(x)} q_{k}(x)}
		{\prod_{j=M+1}^{k} t_j(x)}.
	\]
	We claim the following holds true.
	\begin{claim}
		\label{clm:1}
		There is $c > 0$ such that for all $k \geq L$, 
		\[
			\sup_{x \in K} \frac{\big| p_{kN+i}(x) - q_k(x) \big|}{\prod_{j=M+1}^{k-1} \abs{t_j(x)}} 
			\leq c \epsilon.
		\]
	\end{claim}
	For the proof, let us observe that the recurrence relation implies that for every $j \geq 1$,
	\[
		\begin{pmatrix}
			p_{(j+1)N+i-1} \\
			p_{(j+1)N+i}
		\end{pmatrix}
		=
		X_{jN+i}
		\begin{pmatrix}
			p_{jN+i-1} \\
			p_{jN+i}			
		\end{pmatrix},
	\]
	thus, for $k > L$,
	\[
		p_{kN+i} =
		\bigg\langle
			Q_{k-1} \Big( \prod_{j=L}^{k-1} D_j C_j^{-1} C_{j-1} \Big) Q_{L-1}^{-1}
			\begin{pmatrix}
				p_{LN+i-1} \\
				p_{LN+i}
			\end{pmatrix},
			\begin{pmatrix}
				0 \\
				1
			\end{pmatrix}
		\bigg\rangle.
	\]
	Therefore,
	\begin{equation} 
		\label{eq:76}
		p_{kN+i} - q_{k} =
		\bigg\langle
		Y_k
		\begin{pmatrix}
			p_{LN+i-1} \\
			p_{LN+i}
		\end{pmatrix},
		\begin{pmatrix}
			0 \\
			1
		\end{pmatrix}
		\bigg\rangle
	\end{equation}
	where
	\[
		Y_k = Q_{k-1} \Big(\prod_{j=L}^{k-1} D_j C_j C_{j-1}^{-1} - \prod_{j=L}^{k-1} D_j \Big) Q_{L-1}^{-1}
		+ (Q_{k-1} - Q_{\infty}) \Big( \prod_{j=L}^{k-1} D_j \Big) Q_{L-1}^{-1}.
	\]
	By \eqref{eq:22}, we have
	\[
		\| Y_k \| 
		\leq c
		\bigg( \prod_{j=L}^{k-1} \| D_j \| \bigg)
		\bigg( 
		\sum_{j=L-1}^\infty \sup_K \| \Delta C_j \| + \| Q_{k-1} - Q_{\infty} \| \bigg).
	\]
	Since $\|D_j\| = \abs{t_j}$ for $j \geq M$, by \eqref{eq:45} and \eqref{eq:47}, we conclude that
	\[
		\| Y_{k} \| \leq c' \epsilon \prod_{j = L}^{k-1} \abs{t_j}.
	\]
	Hence, by \eqref{eq:76} we get
	\[
		\frac{| p_{kN+i} - q_k |}{\prod_{j=M+1}^{k-1} \abs{t_j}}
		\leq c \epsilon
		\frac{\sqrt{p_{LN+i-1}^2 + p_{LN+i}^2}}{\prod_{j=M+1}^{L-1} \abs{t_j}}.
	\]
	The task now is to show
	\begin{claim}
		\label{clm:2}
		\begin{equation}
			\label{eq:32}
			\sup_{x \in K} \frac{\sqrt{p_{LN+i-1}^2(x) + p_{LN+i}^2(x)}}{\prod_{j=M+1}^{L-1} \abs{t_j(x)}} \leq c.
		\end{equation}
	\end{claim}
	By the recurrence relation we have
	\begin{align*}
		\begin{pmatrix}
			p_{LN+i-1} \\
			p_{LN+i}
		\end{pmatrix}
		&=
		\bigg( \prod_{j=M+1}^{L-1} X_{jN+i} \bigg)
		\begin{pmatrix}
			p_{(M+1)N+i-1} \\
			p_{(M+1)N+i}
		\end{pmatrix} \\
		&=
		Q_{L-1} \bigg( \prod_{j=M+1}^{L-1} D_{j} C_{j}^{-1} C_{j-1} \bigg) Q_{M}^{-1}
		\begin{pmatrix}
			p_{(M+1)N+i-1} \\
			p_{(M+1)N+i}
		\end{pmatrix}.
	\end{align*}
	Hence, by \eqref{eq:21},
	\[
		\sqrt{p_{LN+i-1}^2 + p_{LN+i}^2} 
		\leq c 
		\bigg( \prod_{j=M+1}^{L-1} |t_j| \bigg) \sqrt{p_{(M+1)N+i-1}^2 + p_{(M+1)N+i}^2},
	\]
	and so,
	\begin{equation}
		\label{eq:75}
		\frac{\sqrt{p_{LN+i-1}^2 + p_{LN+i}^2}}{\prod_{j=M+1}^{L-1} |t_j|} 
		\leq c' 
		\sqrt{p_{(M+1)N+i-1}^2 + p_{(M+1)N+i}^2}.
	\end{equation}
	Since the right-hand side of \eqref{eq:75} is a continuous function on $K$, it is uniformly bounded. 
	This proves \eqref{eq:32} and Claim~\ref{clm:1} follows.

	As a consequence of Claim \ref{clm:1}, for $k > L > M$ we easily get
	\begin{align*}
		\big|
		\phi_k
		-
		\psi_k
		\big|
		&\leq
		c \epsilon \big(1 + \delta^{-1} \abs{\lambda_{kN+i}}\big).
	\end{align*}
	Since $(\lambda_{kN+i} : k \in \NN_0)$ converges to a continuous function on $K$, for any $n \geq m > L > M$ we obtain
	\begin{align*}
		\big|
		\phi_n - \phi_m
		\big|
		\leq
		c\epsilon
		+
		\big|
		\psi_n - \psi_m
		\big|.
	\end{align*}
	Therefore, our task is reduced to showing that the sequence $(\psi_k : k \geq L)$ converges
	uniformly on $K$. To do so, using \eqref{eq:49} we write 
	\[
		q_{k+1} - \overline{\lambda_{kN+i}} q_{k} =
		\bigg\langle
    	Q_{\infty} 
		\big(D_{k} - \overline{\lambda_{kN+i}} \Id \big) \Big( \prod_{j=L}^{k-1} D_j \Big) 
		Q_{L-1}^{-1}
	    \begin{pmatrix}
			p_{LN+i-1} \\
			p_{LN+i}
		\end{pmatrix},
		\begin{pmatrix}
		    0 \\
		    1
		\end{pmatrix}
		\bigg\rangle.
	\]
	Observe that
	\[
		\frac{1}{\prod_{j=L}^{k} t_j} 
		\big(D_k - \overline{\lambda_{k N + i}} \Id \big) \Big( \prod_{j=L}^{k-1} D_j \Big)
		=
		\begin{pmatrix}
			\frac{\gamma_k  - \overline{\lambda_{kN+i}}}{\gamma_k}  & 0 \\
			0 & \frac{\overline{\gamma_k} - \overline{\lambda_{kN+i}}}{\gamma_k} 
			\prod_{j = L}^{k-1} \frac{\overline{\gamma_k}}{\gamma_k}
		\end{pmatrix}.
	\]
	Since the sequence $(\lambda_{kN+i} : k \in \NN_0)$ converges uniformly on $K$, and
	\[
		\lim_{k \to \infty} \sup_{x \in K} \big|\gamma_k(x) - \lambda_{kN+i}(x) \big| = 0,
	\]
	we arrive at the conclusion that the sequence $(\psi_k : k \geq L)$ converges uniformly on $K$.
	This completes the proof of the theorem.
\end{proof}

Our aim is to deduce from Theorem \ref{thm:2} the asymptotic behavior of the polynomials $(p_{kN+i} : k \in \NN_0)$.
\begin{proposition}
	\label{prop:8}
	Fix $i \in \{ 0, 1, \ldots, N-1 \}$ and let $K$ be a compact subset of
	\[
		\Lambda = \Big\{x \in \RR : 
		\lim_{n \to \infty} \discr\big(X_{nN+i}(x)\big) \text{ exists and is negative}
		\Big\}.
	\]
	Suppose that there are continuous functions $t_j : K \rightarrow \CC$, and $\vphi: K \rightarrow \CC$, such that
	\[
		\inf_{j \in \NN} \inf_{x \in K} \abs{t_j(x)} > 0,
	\]
	and
	\[
		\lim_{n \to \infty} \sup_{x \in K} 
		\bigg|
		\frac{p_{(n+1)N+i}(x) - \overline{\lambda_{nN+i}(x)} p_{nN+i}(x)}{\prod_{j=1}^n t_j(x)} - \vphi(x)
		\bigg| = 0.
	\]
	Then 
	\[
		\lim_{n \to \infty} \sup_{x \in K} 
		\bigg|
		\frac{1}{2} \sqrt{-\discr\big(X_{nN+i}(x)\big)} \frac{p_{nN+i}(x)}{\prod_{j=1}^{n} \abs{t_j(x)}}
		- 
		|\varphi(x)| \sin \Big( \sum_{j=1}^{n} \arg t_j(x) + \arg \varphi(x) \Big) 
		\bigg| = 0.
	\]
\end{proposition}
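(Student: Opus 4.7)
The plan is to exploit the fact that $p_n$ is real-valued to turn the asymptotic hypothesis into a formula for $p_{nN+i}$ itself, via complex conjugation and subtraction. Setting $P_n = p_{nN+i}$ and $T_n = \prod_{j=1}^n t_j$, the hypothesis reads
\[
    P_{n+1} - \overline{\lambda_{nN+i}}\, P_n = T_n \bigl( \varphi + \epsilon_n \bigr),
\]
with $\sup_K |\epsilon_n| \to 0$. Because $P_n$ and $P_{n+1}$ are real, taking complex conjugates yields the companion relation
\[
    P_{n+1} - \lambda_{nN+i}\, P_n = \overline{T_n}\bigl( \overline{\varphi} + \overline{\epsilon_n} \bigr).
\]
This reality trick is the only nontrivial input; everything else is bookkeeping.

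Subtracting the second equation from the first eliminates $P_{n+1}$ and produces
\[
    (\lambda_{nN+i} - \overline{\lambda_{nN+i}})\, P_n
    = T_n(\varphi + \epsilon_n) - \overline{T_n(\varphi + \epsilon_n)}
    = 2i\, \Im\bigl( T_n (\varphi + \epsilon_n) \bigr).
\]
By \eqref{eq:153}, for $n$ sufficiently large that $\discr X_{nN+i} < 0$ holds uniformly on $K$ (which is automatic from the compactness of $K \subset \Lambda$ together with the uniform convergence of the discriminants), we have $\lambda_{nN+i} - \overline{\lambda_{nN+i}} = i\sqrt{-\discr X_{nN+i}}$. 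Dividing the resulting identity by $2i |T_n|$ and using $\Im(T_n \varphi)/|T_n| = |\varphi|\sin(\arg T_n + \arg \varphi) = |\varphi|\sin\bigl(\sum_{j=1}^n \arg t_j + \arg \varphi\bigr)$, we arrive at
\[
    \tfrac{1}{2}\sqrt{-\discr X_{nN+i}}\, \frac{P_n}{\prod_{j=1}^n |t_j|}
    = |\varphi|\sin\Bigl(\sum_{j=1}^n \arg t_j + \arg \varphi\Bigr)
    + \frac{\Im(T_n \epsilon_n)}{|T_n|}.
\]
The remainder is bounded in absolute value by $\sup_K |\epsilon_n|$, which tends to zero; this gives the asserted uniform convergence.

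No substantive obstacle arises. The only points of care are: first, verifying that $\lambda_n - \overline{\lambda_n}$ equals $+i\sqrt{-\discr X_n}$ rather than its negative, which follows directly from the sign convention in \eqref{eq:153}; and second, noting that the right-hand side $|\varphi|\sin(\,\cdots\,)$ is insensitive to the $2\pi$-ambiguities in the branches of $\arg t_j$ and $\arg \varphi$, and remains well-defined at zeros of $\varphi$ because the factor $|\varphi|$ then annihilates the undefined $\sin$.
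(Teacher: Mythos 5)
Your proof is correct and is essentially the paper's own argument: subtracting the complex-conjugated relation and dividing by $2i$ is exactly the paper's step of ``taking the imaginary part'' (using that the $p_n$ are real on $\RR$), followed by the identification $\Im \lambda_{nN+i} = \tfrac{1}{2}\sqrt{-\discr X_{nN+i}}$ from \eqref{eq:153}. The only point treated informally on both sides is that $\discr X_{nN+i} \le -\delta$ on $K$ for all large $n$, which the paper justifies elsewhere via the degree bound $\deg \discr X_{nN+i} \le 2N$.
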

\begin{proof}
	We have
	\[
		\lim_{n \to \infty}
		\sup_{x \in K}
		\bigg|
		\frac{p_{(n+1)N+i}(x) - \overline{\lambda_{nN+i}(x)} p_{nN+i}(x)}{\prod_{j=1}^n \abs{t_j(x)}} 
		- \vphi(x) \prod_{j=1}^n \frac{t_j(x)}{\abs{t_j(x)}}\bigg| = 0.
	\]
	Since polynomials $p_n$ are having real coefficients, by taking imaginary part we arrive at
	\[
		\lim_{n \to \infty}
		\sup_{x \in K}
		\bigg|
		\Im \big( \lambda_{nN+i}(x) \big) \frac{p_{nN+i}(x)}{\prod_{j=1}^n {\abs{t_j(x)}}}
		- \abs{\vphi(x)}\sin\Big(\sum_{j=1}^n \arg t_j(x) + \arg \vphi(x) \Big)
		\bigg|=0.
	\]
	Finally,
	\[
		\Im(\lambda_{nN+i}) = \frac{1}{2} \sqrt{-\discr\big(X_{nN+i}\big)},
	\]
	and the conclusion follows.
\end{proof}

Our next task is to compute $\abs{\vphi(x)}$. To do this, once again, we use the truncated sequences
defined in \eqref{eq:42a} and \eqref{eq:42b}. 
\begin{theorem}
	\label{thm:3}
	Let $N$ and $r$ be positive integers and $i \in \{0, 1, \ldots, N-1\}$. Let $K$ be a compact subset of 
	\[
		\Lambda = \Big\{x \in \RR : 
		\lim_{n \to \infty} \discr\big(X_{nN+i}(x)\big) \text{ exists and is negative}
		\Big\}.
	\]
	Assume that
	\begin{equation} \label{eq:139}
		\lim_{n \to \infty} \frac{a_{(n+1)N+i-1}}{a_{nN+i-1}} = 1,
	\end{equation}
	and
	\[
		(X_{nN+i} : n \in \NN) \in \calD_{r, 0} \big( K, \GL(2, \RR) \big).
	\]
	Suppose that $\calX: K \rightarrow \GL(2, \RR)$ is the limit of $(X_{nN+i} : n \in \NN)$. Then
	\begin{align} \label{eq:140}
		&
		\lim_{n \to \infty}
		\sup_{x \in K}
		\bigg|
		\sqrt{a_{(n+1)N+i-1}} \sqrt{-\discr X_{nN+i}(x)} p_{nN+i}(x) \\
		&\qquad\qquad\qquad-
		\sqrt[4]{-\discr \calX(x)} \sqrt{\frac{2 \abs{\calX_{21}(x)}}{\pi \nu'(x)}}
		\sin \Big(
		\sum_{j = 1}^{n} \arg t_j(x) + \vphi(x)
		\Big)
		\bigg| = 0 \nonumber
	\end{align}
	where $\nu$ is the measure defined in Theorem \ref{thm:6}.
\end{theorem}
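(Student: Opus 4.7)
The plan is to reduce Theorem~\ref{thm:3} to an amplitude identification and then resolve that identification via the truncation procedure of Section~\ref{sec:5}.

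First, I apply Theorem~\ref{thm:2} to obtain continuous $t_j \colon K \to \CC$ (uniformly bounded away from $0$) and $\vphi \colon K \to \CC$ satisfying
\[
    \lim_{k \to \infty} \sup_K \bigg| \frac{p_{(k+1)N+i} - \overline{\lambda_{kN+i}}\, p_{kN+i}}{\prod_{j=1}^k t_j} - \vphi \bigg| = 0,
\]
together with $|t_j - \lambda_{jN+i}| \to 0$ uniformly on $K$. Proposition~\ref{prop:8} converts this into
\[
    \tfrac{1}{2}\sqrt{-\discr X_{nN+i}}\; \frac{p_{nN+i}}{\prod_{j=1}^n |t_j|} - |\vphi|\, \sin\Big(\sum_{j=1}^n \arg t_j + \arg \vphi \Big) = o_K(1).
\]
Multiplying through by $2 \sqrt{a_{(n+1)N+i-1}} \prod_{j=1}^n |t_j|$, the theorem is reduced to establishing the uniform convergence on $K$
\[
    2 |\vphi|\sqrt{a_{(n+1)N+i-1}}\prod_{j=1}^n |t_j| \;\longrightarrow\; \sqrt[4]{-\discr \calX}\,\sqrt{\frac{2|\calX_{21}|}{\pi \nu'}}.
\]

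To establish the displayed limit, I employ the truncation of Section~\ref{sec:5}. For each $L = mN + i$ with $m$ large, the sequences $a^L, b^L$ from \eqref{eq:42a}--\eqref{eq:42b} are $N$-periodic beyond index $L+N$, so the classical asymptotics for orthonormal polynomials with $N$-periodic recurrence coefficients (\cite{GeronimoVanAssche1991}, \cite{PeriodicII}) apply to the truncated polynomials $p_n^L$ and yield, for fixed $L$ and as $n \to \infty$,
\[
    \sqrt{a_{(n+1)N+i-1}^L}\,\sqrt{-\discr X_{L+N}^L}\; p_{nN+i}^L(x) \longrightarrow \sqrt[4]{-\discr X_{L+N}^L}\,\sqrt{\frac{2|(X_{L+N}^L)_{21}|}{\pi \mu_L'(x)}}\,\sin\big(\theta_n^L(x)\big).
\]
Applying Theorem~\ref{thm:2} and Proposition~\ref{prop:8} to the truncated problem identifies the analogous amplitude $2|\vphi^L|\sqrt{a_{(n+1)N+i-1}^L} \prod_{j=1}^n |t_j^L|$ with the explicit right-hand side above.

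Finally, I let $L = mN+i \to \infty$. Corollary~\ref{cor:3} gives $\|X_{L+N}^L - X_L\| \to 0$ uniformly on $K$, so $X_{L+N}^L \to \calX$, and in particular $(X_{L+N}^L)_{21} \to \calX_{21}$ and $\discr X_{L+N}^L \to \discr \calX$. Theorem~\ref{thm:6} together with Corollary~\ref{cor:9} provides $\mu_L' \to \nu'$ uniformly on $K$. It remains to show that the amplitude produced by Theorem~\ref{thm:2} for the truncated sequence converges, as $L \to \infty$, to the amplitude for the original sequence. This is handled by a stability argument for the iterated diagonalization of Theorems~\ref{thm:4} and~\ref{thm:5}: since the construction is explicit and the inputs $(X_{nN+i}^L)$ agree with $(X_{nN+i})$ for $n \leq m$ and differ by $o(1)$-perturbations afterwards (by Corollary~\ref{cor:3}), the output functions $t_j^L, \vphi^L$ converge on appropriate scales to $t_j, \vphi$, forcing the amplitude identification to persist in the limit. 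The main obstacle is precisely this last stability step: the amplitude depends on an entire infinite product, so a careful matching of $n$ and $L$ along a diagonal subsequence is required to transfer the closed-form amplitude from the periodic case to the general slowly-oscillating setting, with Proposition~\ref{prop:4} supplying the quantitative control on the discrepancy between truncated and original Tur\'an determinants that makes this transfer rigorous.
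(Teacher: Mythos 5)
Your opening reduction (Theorem~\ref{thm:2} plus Proposition~\ref{prop:8}, then identify the amplitude $2|\vphi|\sqrt{a_{(n+1)N+i-1}}\prod_{j\le n}|t_j|$) and your general plan of passing through the truncated, eventually periodic systems are indeed the paper's strategy. But the one step you yourself flag as ``the main obstacle'' --- transferring the closed-form amplitude from the truncated problem to the original one --- is the actual substance of the proof, and your proposed route does not close it. For each fixed $L$ the truncated amplitude $\vphi^L$ is defined through a limit $n\to\infty$, so proving ``$t_j^L,\vphi^L$ converge on appropriate scales to $t_j,\vphi$'' and then diagonalizing in $(n,L)$ is an interchange of two limits that needs uniformity in $n$ which nothing in your sketch provides; moreover Proposition~\ref{prop:4} controls the discrepancy of Tur\'an determinants, not of the normalized differences $\big(p_{(m+1)N+i}-\overline{\lambda}\,p_{mN+i}\big)\big/\prod t_j$ whose limits define $\vphi$ and $\vphi^L$, so it cannot by itself ``make this transfer rigorous.'' You also never address the convergence (or even boundedness) of the normalizing factor $a_{(n+1)N+i-1}\prod_{j\le n}|t_j|^2$; without it your ``multiply through by $2\sqrt{a_{(n+1)N+i-1}}\prod_{j\le n}|t_j|$'' step does not even preserve the $o_K(1)$ error. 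In the paper this comes from the determinant identity \eqref{eq:62} produced by Theorem~\ref{thm:5}.

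The paper avoids the limit interchange altogether by three concrete observations missing from your proposal. First, since the truncated coefficients \eqref{eq:42a}--\eqref{eq:42b} are exactly $N$-periodic beyond $L+N$, the truncated normalized differences $\phi^L_{mN+i}$ are \emph{exactly constant} in $m$ for $m\ge k+1$ (with $L=kN+i$); hence the truncated amplitude is not an $n\to\infty$ limit at all but the finite-index quantity $\phi^L_{L+N}$. Second, at the truncation index the truncated and original objects essentially coincide: $p^L_n=p_n$ for $n\le L+N$ and $\|X^{L}_{L+N}-X_{L+N}\|\to 0$ by Corollary~\ref{cor:3}, which gives $\sup_K\big|\phi^{L_k}_{L_k+N}-\phi_{L_k+N}\big|\to 0$; since $\phi_{L_k+N}\to\vphi$ by Theorem~\ref{thm:2}, this identifies $|\vphi|$ with $\lim_k\big|\phi^{L_k}_{L_k+N}\big|$ with no diagonal matching needed. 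Third, $\big|\phi^L_{L+N}\big|^2$ is computed in closed form from the periodic-case formula of \cite{GeronimoVanAssche1991} in terms of $\mu_L'$, $\big[X^L_{L+N}\big]_{2,1}$ and $\discr X^L_{L+N}$ (again using the exact stabilization to turn an $L^2(\mu_L)$ limit into a pointwise identity), and Corollary~\ref{cor:9} supplies $\mu_{L_k}'\to\nu'$ uniformly on $K$. So the skeleton of your argument is right, but the amplitude-transfer step is a genuine gap rather than a technicality, and the mechanism that fills it (exact stabilization past the truncation point plus comparison at the truncation index) is qualitatively different from the ``stability of the iterated diagonalization'' you appeal to.
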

\begin{proof}
	Since $K \subset \Lambda$, and $\discr X_{kN+i}$ is a polynomial of degree at most $2N$, there 
	are $\delta > 0$ and $M \geq 1$ such that for all $x \in K$ and $n \geq M$,
	\[
		\discr X_{nN+i}(x) \leq -\delta.
	\]
	Given $L = kN+i$, we set
	\[
		\Lambda_L = \Big\{x \in \RR : \discr\big(X^L_{L+N}(x) \big) < 0 \Big\}.
	\]
	In view of \eqref{eq:42a} and \eqref{eq:42b}, we have
	\[
		X^L_{jN+i} = 
		\begin{cases}
			X_{jN+i} & \text{if } 0 \leq j \leq k, \\
			X_{L+N}^L & \text{if } k < j. 
		\end{cases}
	\]
	Moreover, there is $L_0 \geq M$ such that $K \subset \Lambda_L$ for all $L \geq L_0$. 
	Thus, for all $L > L_0$, $j \geq M$, and $x \in K$,
	\[
		\discr \big( X^L_{jN+i}(x) \big) < 0. 
	\]
	In particular, by Theorem \ref{thm:5}, the sequence $\big( X_{mN+i}^L : m \geq M \big)$ 
	is uniformly diagonalizable with
	\[
		D^L_m = 
		\begin{pmatrix}
			\gamma_m^L & 0 \\
			0 & \overline{\gamma_m^L}
		\end{pmatrix}.
	\]
	We set
	\[
		t_m^L(x) = 
		\begin{cases}
			1 & \text{if } 0 \leq m \leq M, \\
			\gamma_m^L(x) & \text{if } M < m,
		\end{cases}
	\]
	and
	\begin{align*}
		\phi_{mN+i}(x) &= \frac{p_{(m+1)N+i}(x) - \overline{\lambda_{mN+i}(x)} p_{mN+i}(x)}
		{\prod_{j=M+1}^m t_j(x)}, \\
		\phi_{mN+i}^L(x) &= \frac{p^L_{(m+1)N+i}(x) - \overline{\lambda_{mN+i}^L(x)} p^L_{mN+i}(x)}
		{\prod_{j = M+1}^m t_j^L(x)}.
	\end{align*}
	We next show the following claim.
	\begin{claim}
		\label{clm:5}
		For all $m \geq k+1$, we have $\phi_{mN+i}^L = \phi_{L+N}^L$.
	\end{claim}
	For the proof, let us first observe that
	\[
		X^L_{mN+i} = X^L_{L+N} \qquad\text{for all } m \geq k+1.
	\]
	Hence, by Remark \ref{rem:1}, for $m \geq k+1$, 
	\begin{align*}
		p_{mN+i}^L 
		&= 
		\left\langle
		\big(X^L_{L+N}\big)^{m-k-1} 
		\begin{pmatrix}
			p^L_{L+N-1} \\
			p^L_{L+N}
		\end{pmatrix},
		\begin{pmatrix}
			0 \\
			1
		\end{pmatrix}
		\right\rangle \\
		&=
		\left\langle
		C^L \big(D^L\big)^{m-k-1} \big(C^L\big)^{-1}
		\begin{pmatrix}
			p^L_{L+N-1} \\
			p^L_{L+N}
		\end{pmatrix},
		\begin{pmatrix}
			0 \\
			1
		\end{pmatrix}
		\right\rangle 
	\end{align*}
	where
	\[
		X^L_{L+N} = 
		C^L D^L \big(C^L\big)^{-1},
		\qquad\text{and}\qquad
		D^L = 
		\begin{pmatrix}
			\lambda^L & \\
			0 & \overline{\lambda^L}
		\end{pmatrix}.
	\]
	Therefore, for $m \geq k+1$,
	\begin{align*}
		\phi_{mN+i}^L 
		&= 
		\frac{p_{(m+1)N+i}^L - \overline{\lambda^L} p_{mN+i}^L}
		{\prod_{j = M+1}^m t_j^L} \\
		&=
		\frac{1}{\prod_{j = M+1}^m t_j^L}
		\left\langle
		C^L
		\big(D^L - \overline{\lambda^L} \Id\big) \big(D^L\big)^{m-k-1}
		\big(C^L\big)^{-1}
		\begin{pmatrix}
			p^L_{L+N-1} \\
			p^L_{L+N}
		\end{pmatrix},
		\begin{pmatrix}
			0 \\
			1
		\end{pmatrix}
		\right\rangle.
	\end{align*}
	Since
	\[
		\frac{1}{\big( \lambda^L \big)^{m-k-1}} 
		\big(D^L - \overline{\lambda^L} \Id\big) \big(D^L\big)^{m-k-1} =
		\begin{pmatrix}
			\lambda^L - \overline{\lambda^L} & 0 \\
			0 & 0
		\end{pmatrix}
		\begin{pmatrix}
			1 & 0 \\
			0 & \Big( \frac{\overline{\lambda^L}}{\lambda^L} \Big)^{m-k-1}
		\end{pmatrix}
		=
		D^L - \overline{\lambda^L} \Id,
	\]
	we obtain
	\begin{align*}
		\phi^L_{mN+i}
		=
		\frac{1}{\prod_{j = M+1}^{k+1} t_j^L}
		\left\langle
		C^L\big(D^L - \overline{\lambda^L} \Id\big)
		\big(C^L\big)^{-1}
		\begin{pmatrix}
			p^L_{L+N-1} \\
			p^L_{L+N}
		\end{pmatrix},
		\begin{pmatrix}
			0 \\
			1
		\end{pmatrix}
		\right\rangle
		=
		\phi_{L+N}^L,
	\end{align*}
	and the claim follows.
	\begin{claim}
		\label{clm:4}
		Let  $L_k = k N + i$. If
		\begin{equation}
			\label{eq:56}
			\lim_{k \to \infty} \sup_{x \in K} \Big\| X^{L_k}_{L_k+N} - X_{L_k + N} \Big\| = 0,
		\end{equation}
		then
		\[
			\lim_{k \to \infty} \sup_{x \in K}
			\big|\phi^{L_k}_{L_k+N}(x) - \phi_{L_k+N}(x) \big| = 0.
		\]
	\end{claim}
	Since $p_n^{L_k} = p_n$ for $n \leq L_k+N$, we have
	\[
		\phi_{L_k+N} = \frac{1}{\prod_{j = M+1}^{k+1} t_j}
		\left\langle
		\Big(X_{L_k+N} - \overline{\lambda_{L_k+N}} \Id\Big)
		\begin{pmatrix}
			p_{L_k+N-1} \\
			p_{L_k+N}
		\end{pmatrix},
		\begin{pmatrix}
			0 \\
			1
		\end{pmatrix}
		\right\rangle
	\]
	and
	\[
		\phi_{L_k+N}^{L_k} = \frac{1}{\prod_{j = M+1}^{k+1} t_j^{L_k}}
		\left\langle
		\Big(X_{L_k+N}^{L_k} - \overline{\lambda^{L_k}} \Id \Big)
		\begin{pmatrix}
			p_{L_k+N-1} \\
			p_{L_k+N}
		\end{pmatrix},
		\begin{pmatrix}
	        0 \\
	        1 
		\end{pmatrix}
		\right\rangle.
	\]
	Observe that for $j \in \NN$, we have
	\begin{equation}
		\label{eq:65}
		t_j^{L_k} = 
		\begin{cases}
			t_j & \text{if } j \leq k,\\
			\lambda^{L_k} & \text{if } j > k.
		\end{cases}
	\end{equation}
	Hence,
	\[
		\phi_{L_k+N} - \phi^{L_k}_{L_k+N} = \frac{1}{\prod_{j=M+1}^k t_j} 
		\left\langle
		Y_k 
		\begin{pmatrix}
			p_{L_k+N-1} \\
			p_{L_k+N}
		\end{pmatrix},
		\begin{pmatrix}
			0 \\
			1
		\end{pmatrix}
		\right\rangle 
	\]
	where
	\[
		Y_k = 
		\frac{1}{\gamma_{k+1}} 
		\Big(X_{L_k+N} - \overline{\lambda_{L_k+N}} \Id \Big) 
		- \frac{1}{\lambda^{L_k}}
		\Big(X_{L_k+N}^{L_k} - \overline{\lambda^{L_k}} \Id\Big).
	\]
	Since
	\begin{align*}
		\| Y_k \| 
		\leq 
		\bigg|\frac{1}{\gamma_{k+1}} - \frac{1}{\lambda^{L_k}} \bigg|
		\big\|X_{L_k+N} - \overline{\lambda_{L_k+N}} \Id\big\|
		+
		\frac{1}{|\lambda^{L_k}|}
		\big\|X_{L_k+N}^{L_k} - X_{L_k+N} \big\| 
		+ 
		\frac{|\lambda^{L_k} - \lambda_{L_k+N}|}{|\lambda^{L_k}|},
	\end{align*}
	by \eqref{eq:56} and Claim \ref{clm:2}, we conclude that
	\[
		\lim_{k \to \infty}
		\sup_{x \in K} 
		\big|
		\phi_{L_k+N}(x) - \phi^{L_k}_{L_k+N}(x)
		\big|
		\leq
		c \lim_{k \to \infty} \sup_{x \in K} \big\| Y_k(x) \big\| = 0,
	\]
	proving the claim.

	Our next goal is to compute $\abs{\phi^L_{L+N}(x)}^2$.
	\begin{claim}
		\label{clm:3}
		For $L = kN + i$ and $x \in K$, we have
		\[
			\big| \phi^L_{L+N}(x) \big|^2
			=
			\frac{\left|\big[X_{L+N}^L(x)\big]_{2,1}\right|}
			{a_{(M+1)N+i-1} \det Q_k^{-1}(x) \det Q_M(x)}
			\cdot \frac{\sqrt{-\discr \big( X^L_{L+N}(x) \big)}}{2 \pi \mu_L'(x)}.
		\]
	\end{claim}
	For the proof, let us consider a positive continuous function $f: K \rightarrow \RR$. By applying
	the formula that appears at the bottom of the page 363 of \cite{GeronimoVanAssche1991}, we get
	\begin{equation}
		\label{eq:43}
		\begin{aligned}
		&
		\lim_{j \to \infty} 
		\int_\RR f(x) \Big|\alpha_{L+jN}(x) p^L_{L+(j+1)N}(x) - p^L_{L+jN}(x) \Big|^2 \mu_L({\rm d} x) \\
		&\qquad\qquad=
		\frac{1}{\pi a_L^L} 
		\int_\RR f(x) \big| w_{N-1}^{[1]}(x) \big| \sqrt{1 - \abs{T(x)}^2} {\: \rm d} x
		\end{aligned}
	\end{equation}
	where
	\[
		T(x) = \tfrac{1}{2} \tr\big( X^L_{L+N}(x)\big),
	\]
	and for each $m \in \NN_0$, $\big( w_n^{[m]} : n \in \NN_0 \big)$ is the sequence of orthonormal
	polynomials associated with $N$-periodic sequences $\big( a_n^L : n \geq L+m \big)$ and 
	$\big( b_n^L : n \geq L+m \big)$, and $\alpha_{L+jN}(x)$ is a specific solution of the equation
	\begin{equation} \label{eq:58}
		\alpha^2 - 2 T(x) \alpha + 1 = 0.
	\end{equation}
	Observe that
	\[
		\Big|\alpha_{L+jN}(x) p^L_{L+(j+1)N}(x) - p^L_{L+jN}(x) \Big|
		=
		\Big|\overline{\alpha_{L+jN}(x)} p^L_{L+(j+1)N}(x) - p^L_{L+jN}(x) \Big|.
	\]
	Since $\lambda^L(x)$ is a solution of \eqref{eq:58}, we obtain
	\begin{align*}
		\Big|\alpha_{L+jN}(x) p^L_{L+(j+1)N}(x) - p^L_{L+jN}(x) \Big|
		&=
		\Big|\lambda^L(x) p^L_{L+(j+1)N}(x) - p^L_{L+jN}(x) \Big| \\
		&=
		\Big|p^L_{L+(j+1)N}(x) - \overline{\lambda^L(x)} p^L_{L+jN}(x) \Big|.
	\end{align*}
	Hence, by Claim \ref{clm:5}, for $j \geq 1$,
	\[
		\Big|\alpha_{L+jN}(x) p^L_{L+(j+1)N}(x) - p^L_{L+jN}(x) \Big|
		=
		\Big|p^L_{L+2 N}(x) - \overline{\lambda^L(x)} p^L_{L+N}(x) \Big|.
	\]
	Moreover, by \cite[Proposition 3]{PeriodicIII}, we have
	\[
		X_{L+N}^L(x) = 
		\begin{pmatrix}
			-\frac{a^L_{L+N-1}}{a^L_{L}} w^{[1]}_{N-2}(x) & w^{[0]}_{N-1}(x) \\
			-\frac{a^L_{L+N-1}}{a^L_{L}} w^{[1]}_{N-1}(x) & w^{[0]}_{N}(x)
		\end{pmatrix}.
	\]
	Since $\mu_L$ is absolutely continuous on $\Lambda_L$, by \eqref{eq:43}, we obtain
	\begin{equation}
		\label{eq:60}
		\Big|p^L_{L+2N}(x) - \overline{\lambda^L(x)} p^L_{L+N}(x) \Big|^2 \mu_L'(x) 
		=
		\frac{1}{2 \pi a^L_{L+N-1}} 
		\Big| \big[X^L_{L+N}(x)\big]_{2,1} \Big| \cdot \sqrt{-\discr \big( X^L_{L+N}(x) \big)},
	\end{equation}
	for almost all $x \in K$. Since both sides of \eqref{eq:60} are continuous functions, the 
	equality in \eqref{eq:60} is for all $x \in K$. Lastly, let us observe that 
	$\abs{t_j^L} \equiv 1$ for $j \geq k+1$. Therefore, by \eqref{eq:65} and \eqref{eq:62},
	\begin{align*}
		\prod_{j=M+1}^{k+1} \big| t^L_j(x) \big|^2 
		&= \prod_{j = M+1}^k \big| t_j(x) \big|^2 \\
		&= \frac{a_{(M+1)N+i-1}}{a_{L+N-1}} \det Q_k^{-1}(x) \det Q_M(x),
	\end{align*}
	and the claim follows.

	Now, we are in the position to prove the theorem. By Corollary~\ref{cor:9} and Claim \ref{clm:3},
	\[
		\lim_{k \to \infty} \big|\phi_{L_k+N}^{L_k}(x)\big|^2 = 
		\frac{\left|\calX_{21}(x)\right|}{a_{(M+1)N+i-1} \det Q_M(x)}
		\cdot \frac{\sqrt{-\discr \calX(x)}}{2 \pi \nu'(x)},
	\]
	uniformly with respect to $x \in K$. Since
	\[
		\Big\|X^{L_k}_{L_k+N} - X_{L_k+N} \Big\| 
		\leq
		\Big\|X^{L_k}_{L_k+N} - X_{L_k} \Big\| + \Big\|X_{L_k} - X_{L_{k+1}} \Big\|,
	\]
	by Corollary \ref{cor:3}, we have
	\[
		\lim_{k \to \infty} \sup_{x \in K} \Big\|X^{L_k}_{L_k+N}(x) - X_{L_k+N}(x) \Big\| = 0.
	\]
	Therefore, by Claim \ref{clm:4},
	\[
		\lim_{k \to \infty}
		\big| \phi_{L_k+N}(x) \big|^2 
		=
		\frac{\left|\calX_{21}(x)\right|}{a_{(M+1)N+i-1} \det Q_M(x)}
		\cdot \frac{\sqrt{-\discr \calX(x)}}{2 \pi \nu'(x)},
	\]
	uniformly with respect to $x \in K$. Since, by \eqref{eq:62}, 
	\[
		\lim_{k \to \infty} \sup_{x \in K}
		\Big|
		a_{(k+1)N+i-1} \prod_{j = M+1}^{k} \big|t_j(x) \big|^2
		-
		a_{(M+1)N+i-1} \det Q_M(x)
		\Big| = 0,
	\]
	by Proposition \ref{prop:8}, we conclude the proof.
\end{proof}

\begin{corollary} \label{cor:7}
Under the hypotheses of Theorem~\ref{thm:3} one has
\[
	\sqrt{a_{nN+i-1}} p_{nN+i}(x) = 
	\sqrt{\frac{2 \abs{\calX_{21}(x)}}{\pi \nu'(x) \sqrt{-\discr \calX(x)}}}
	\sin \Big( \sum_{j=M+1}^{n} \theta_j(x) + \vphi(x) \Big) + o_K(1), \qquad x \in K
\]
for some continuous functions $\theta_j : K \to (0, \pi)$ satisfying
\begin{equation} \label{eq:144}
	\theta_j(x) = \arccos \Big( \tfrac{1}{2} \tr \calX(x) \Big) + o_K(1).
\end{equation}
\end{corollary}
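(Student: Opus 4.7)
The plan is to derive Corollary~\ref{cor:7} from Theorem~\ref{thm:3} by (i) dividing through by $\sqrt{-\discr X_{nN+i}(x)}$, (ii) replacing $\sqrt{a_{(n+1)N+i-1}}$ by $\sqrt{a_{nN+i-1}}$, and (iii) re-indexing the phase so that the summands $\theta_j$ are the arguments of the diagonal entries produced by Theorem~\ref{thm:5}. For (i), since $\discr X_{nN+i}$ is a polynomial in $x$ of degree at most $2N$, the convergence $\discr X_{nN+i}\to \discr\calX$ is uniform on the compact set $K$, and the limit is continuous and bounded away from zero on $K$. Dividing both sides of the asymptotic identity of Theorem~\ref{thm:3} by $\sqrt{-\discr X_{nN+i}(x)}$ therefore preserves the uniform error $o_K(1)$ and produces exactly the amplitude
\[
    \frac{\sqrt[4]{-\discr \calX(x)}}{\sqrt{-\discr \calX(x)}}\sqrt{\frac{2\abs{\calX_{21}(x)}}{\pi\nu'(x)}}
    =\sqrt{\frac{2\abs{\calX_{21}(x)}}{\pi\nu'(x)\sqrt{-\discr\calX(x)}}}
\]
appearing in Corollary~\ref{cor:7}. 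For (ii), since the right-hand side of the preceding display is uniformly bounded on $K$, multiplying by $\sqrt{a_{nN+i-1}/a_{(n+1)N+i-1}}=1+o(1)$, which is legitimate by \eqref{eq:139}, converts the prefactor $\sqrt{a_{(n+1)N+i-1}}$ into $\sqrt{a_{nN+i-1}}$ without changing the $o_K(1)$ error.

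For (iii), I would use the construction inside the proof of Theorem~\ref{thm:3}: $t_j(x)=\gamma_j(x)$ for $j>M$ and $t_j(x)\equiv 1$ for $j\leq M$. In particular $\arg t_j\equiv 0$ for $j\leq M$, so the phase sum $\sum_{j=1}^n\arg t_j$ in Theorem~\ref{thm:3} coincides with $\sum_{j=M+1}^n\arg\gamma_j$. The iterative application of Theorem~\ref{thm:4} in the proof of Theorem~\ref{thm:5} yields $\Im\gamma_j>0$ uniformly for large $j$, so the principal argument $\theta_j:=\arg\gamma_j$ is a well-defined continuous function $K\to(0,\pi)$ for $j>M$; redefining the finitely many initial $\theta_j$ (which only shifts $\vphi$ by a constant) we may arrange that this holds for every $j\geq M+1$.

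It remains to verify \eqref{eq:144}. Here the key observation is that $\det B_j=a_{j-1}/a_j$, so $\det X_{nN+i}=a_{nN+i-1}/a_{(n+1)N+i-1}\to 1$ by \eqref{eq:139}. Consequently $\abs{\lambda_{nN+i}}^2=\det X_{nN+i}\to 1$ uniformly on $K$, and the limit $\lambda_\infty(x)=\tfrac12\tr\calX(x)+\tfrac{i}{2}\sqrt{-\discr\calX(x)}$ lies on the unit circle in the open upper half-plane, whence $\arg\lambda_\infty(x)=\arccos\bigl(\tfrac12\tr\calX(x)\bigr)$. Combining the uniform convergences $\gamma_j-\lambda_{jN+i}\to 0$ (from \eqref{eq:152}) and $\lambda_{jN+i}\to\lambda_\infty$ (from uniform convergence of $\tr X_{jN+i}$ and $\discr X_{jN+i}$ on $K$) together with continuity of the principal argument away from the negative real axis then yields $\theta_j\to\arccos(\tfrac12\tr\calX)$ uniformly on $K$. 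The only points requiring care are the two uniform lower bounds that make the above divisions legitimate, namely $\sqrt{-\discr X_{nN+i}}\geq c>0$ for step~(i) and $\Im\gamma_j\geq c>0$ for step~(iii); both follow from the negativity of the limit $\discr\calX$ on the compact set $K$ and from Theorem~\ref{thm:4}'s conclusion respectively.
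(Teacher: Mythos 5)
Your proposal is correct and follows essentially the same route as the paper: manipulate the uniform asymptotic \eqref{eq:140} using the uniform convergence of $\discr X_{nN+i}$ to $\discr\calX$ (negative, hence bounded away from zero on $K$) and \eqref{eq:139}, then set $\theta_j=\arg t_j$ and deduce \eqref{eq:144} from \eqref{eq:143} together with $\det X_{nN+i}=a_{nN+i-1}/a_{(n+1)N+i-1}\to 1$, so that the limiting eigenvalue lies on the unit circle and its argument is $\arccos\bigl(\tfrac12\tr\calX\bigr)$. The only cosmetic difference is that the paper invokes Corollary~\ref{cor:8} for the uniform bound on $\sqrt{a_{(n+1)N+i-1}}\,|p_{nN+i}|$, whereas you obtain the needed boundedness directly from the limiting amplitude and the uniform lower bound on $\sqrt{-\discr X_{nN+i}}$, which is equally valid.
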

\begin{proof}
By Corollary~\ref{cor:8} there is constant $C>0$ such that for every $n \geq 1$ and $x \in K$,
\[
	\sqrt{a_{(n+1)N+i-1}} |p_{nN+i}(x)| \leq C.
\]
Since $\discr X_{nN+i}$ is a polynomial of degree at most $2N$,
\[
	\lim_{n \to \infty} \sup_{x \in K} 
	\Big| \sqrt{-\discr X_{nN+i}(x)} - \sqrt{-\discr \calX(x)} \Big| = 0.
\]
Plugging \eqref{eq:139} into \eqref{eq:140}, we obtain
\[
	\sqrt{a_{nN+i-1}} p_{nN+i}(x) = 
	\sqrt{\frac{2 \abs{\calX_{21}(x)}}{\pi \nu'(x) \sqrt{-\discr \calX(x)}}}
	\sin \Big( \sum_{j=M+1}^{n} \arg t_j(x) + \vphi(x) \Big) + o_K(1), \qquad x \in K.
\]
For $j \geq M$, we set
\[
	\theta_j(x) = \arg t_{j}(x).
\]
Then \eqref{eq:143} implies \eqref{eq:144} and the corollary follows.
\end{proof}

\section{Applications} 
\label{sec:2}
In this section we present applications of the main results of this article. To simplify the exposition let us
first introduce some notation. For any positive integers $N$ and $r$, we say that a real sequence $(x_n : n \geq 0)$
belongs to $\calD^N_{r, 0}$ if for every $i \in \{0, 1, \ldots, N-1 \}$,
\[
	(x_{nN+i} : n \geq 0) \in \calD_{r,0}(\RR).
\]
Moreover, we shall use $N$-step difference operator defined by
\[
	\Delta_N x_n = x_{n+N} - x_n.
\]

\begin{proposition} \label{prop:9}
Let $N$ and $r$ be positive integers. Suppose that for $i \in \{0, 1, \ldots, N-1 \}$,
\[
	\bigg( \frac{a_{nN+i-1}}{a_{nN+i}} : n \in \NN \bigg),\ 
	\bigg( \frac{b_{nN+i}}{a_{nN+i}} : n \in \NN \bigg),\ 
	\bigg( \frac{1}{a_{nN+i}} : n \in \NN \bigg) \in \calD_{r,0}.
\]
Then for every compact $K \subset \RR$,
\[
	(B_{nN+i} : n \in \NN) \in \calD_{r,0} \big(K, \GL(2, \RR) \big).
\]
\end{proposition}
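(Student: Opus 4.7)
The plan is to check that each matrix entry of $B_{nN+i}(\cdot)$, viewed as a sequence of continuous functions on $K$, belongs to $\calD_{r,0}(K,\RR)$, and then invoke the evident fact that a sequence of $2\times 2$ matrix-valued maps lies in $\calD_{r,0}(K,\GL(2,\RR))$ precisely when its four entries lie in $\calD_{r,0}(K,\RR)$ and the sequence is uniformly bounded with values in $\GL(2,\RR)$. Recall that
\[
	B_{nN+i}(x) =
	\begin{pmatrix}
		0 & 1 \\
		-\dfrac{a_{nN+i-1}}{a_{nN+i}} & \dfrac{x - b_{nN+i}}{a_{nN+i}}
	\end{pmatrix},
\]
so the first row consists of the constants $0$ and $1$, which trivially belong to $\calD_{r,0}(K,\RR)$ since all their differences in $n$ vanish.

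For the $(2,1)$ entry, the real sequence $\big(a_{nN+i-1}/a_{nN+i}\big)$ is in $\calD_{r,0}$ by hypothesis, and a real sequence $(c_n) \in \calD_{r,0}$ regarded as a sequence of constant-in-$x$ functions on $K$ clearly lies in $\calD_{r,0}(K,\RR)$, since $\Delta^j$ commutes with evaluation and the supremum norm over $K$ equals $|c_n|$. The same applies to $\big(b_{nN+i}/a_{nN+i}\big)$ and $\big(1/a_{nN+i}\big)$.

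The one nontrivial entry is the $(2,2)$ entry, which I would split as
\[
	\frac{x-b_{nN+i}}{a_{nN+i}} = x \cdot \frac{1}{a_{nN+i}} - \frac{b_{nN+i}}{a_{nN+i}}.
\]
The function $x\mapsto x$ is a bounded continuous function on the compact set $K$ and does not depend on $n$, so it belongs trivially to $\calD_{r,0}(K,\RR)$. Multiplying by the sequence of constants $1/a_{nN+i}\in \calD_{r,0}(K,\RR)$ and applying Corollary~\ref{cor:1}(\ref{en:1}) shows that the first summand is in $\calD_{r,0}(K,\RR)$; the second is there directly. Closure of $\calD_{r,0}(K,\RR)$ under addition is immediate from $\Delta^j(x_n+y_n)=\Delta^jx_n+\Delta^jy_n$ together with the elementary inequality $(a+b)^{r/j}\le c_{r,j}(a^{r/j}+b^{r/j})$.

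Finally, because $(a_n)$ is a sequence of positive numbers, $\det B_{nN+i}(x)=a_{nN+i-1}/a_{nN+i}>0$, so the matrices indeed lie in $\GL(2,\RR)$; uniform boundedness follows from the fact that each entry sequence is bounded, a condition built into the definition of $\calD_{r,0}$. I do not anticipate any real obstacle here: the argument is purely an application of the algebra structure of $\calD_{r,0}$ (Corollary~\ref{cor:1}) combined with the trivial observation that constants in $n$ add nothing to the differencing conditions. The only minor point worth writing out carefully is the closure of $\calD_{r,0}(K,\RR)$ under addition, which is not formally stated earlier but is used implicitly throughout the paper.
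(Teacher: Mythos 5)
Your proposal is correct and follows essentially the same route as the paper: the paper likewise reduces everything to the entries by computing $\Delta^j B_{nN+i}(x)$ explicitly (the first row being constant, the second row built from $\Delta^j$ of the three hypothesised scalar sequences), bounds the supremum norm over $K$ by the sum of the entry moduli with the factor $x$ absorbed into a constant by compactness of $K$, and concludes from the summability hypotheses. Your use of Corollary~\ref{cor:1}(\ref{en:1}) for the term $x\cdot\frac{1}{a_{nN+i}}$, and your explicit remarks on closure under addition, invertibility via $\det B_{nN+i}=\frac{a_{nN+i-1}}{a_{nN+i}}>0$, and uniform boundedness, are just slightly more formalized versions of the same estimates.
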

\begin{proof}
First of all, for every $X \in \GL(2, \RR)$,
\begin{equation} \label{eq:133}
	\| X \| \leq \| X \|_2 \leq \| X \|_1,
\end{equation}
where $\| X \|_t$ the $t$-norm of the matrix considered as the element of $\RR^{4}$.
For every $j \in \NN_0$,
\[
	\Delta^j B_{nN+i}(x) = 
	\begin{pmatrix}
		0 & \Delta^j (1) \\
		- \Delta^j \Big( \frac{a_{nN+i-1}}{a_{nN+i}} \Big) & 
		x \Delta^j \Big( \frac{1}{a_{nN+i}} \Big) - 
		\Delta^j \Big( \frac{b_{nN+i}}{a_{nN+i}} \Big)
	\end{pmatrix}.
\]
Hence, by \eqref{eq:133} and compactness of $K$ there is a constant $c > 0$ such that
\[
	\sup_{x \in K} \| \Delta^j B_{nN+i}(x) \| \leq 
	\Delta^j(1) +
	\Big| \Delta^j \Big(\frac{a_{nN+i-1}}{a_{nN+i}} \Big) \Big| +
	c \Big| \Delta^j \Big( \frac{1}{a_{nN+i}} \Big) \Big| +
	\Big| \Delta^j \Big( \frac{b_{nN+i}}{a_{nN+i}} \Big) \Big|.
\]
From which the conclusion easily follows.
\end{proof}

\subsection{Asymptotically periodic case}
Let $N$ be a positive integer, and let $(\alpha_n : n \in \ZZ)$ and $(\beta_n : n \in \ZZ)$ be $N$-periodic sequences 
of positive and real numbers, respectively. For any $i \in \{0, 1, \ldots, N-1 \}$, let us define
\begin{equation} 
	\label{eq:114}
	\calX_i(x) =
	\prod_{j=i}^{N+i-1}
	\calB_{j}(x)
	\quad \text{where} \quad
	\calB_j(x) = 
	\begin{pmatrix}
		0 & 1 \\
		-\frac{\alpha_{j-1}}{\alpha_j} & \frac{x - \beta_j}{\alpha_j}
	\end{pmatrix}.
\end{equation}
Let $A_\per$ be the Jacobi matrix on $\ell^2(\NN_0)$ associated with the sequences $\alpha$ and $\beta$. Then
\[
	(\tr \calX_0)^{-1}\big((-2,2)\big) = \bigcup_{j=1}^N I_j
\]
where $I_j$ are open non-empty disjoint intervals. Moreover, 
\[
	\sigmaEss{A_\per} = (\tr \calX_0)^{-1} \big( [-2,2] \big),
\]
and the corresponding measure $\mu_\per$ is purely absolutely continuous on every $I_j$ with positive real analytic
density (see, e.g. \cite[Chapter 5]{Simon2010}).

Since we have a good understanding of $A_\per$, it is natural to consider Jacobi matrices $A$ which are compact
perturbations of $A_\per$, that is
\begin{equation} 
	\label{eq:115}
	\lim_{n \to \infty} |a_n - \alpha_n| = 0, \qquad
	\lim_{n \to \infty} |b_n - \beta_n| = 0.
\end{equation}
Observe that by Weyl's theorem $\sigmaEss{A} = \sigmaEss{A_\per}$. Let us decompose of the corresponding measure 
\[
	\mu({\rm d} x) = \mu_{\textrm{ac}}'(x) \ud x + \mu_{\textrm{s}}({\rm d} x),
\]
where $\mu_{\textrm{s}}$ is a singular measure. In \cite[Theorem 6]{GeronimoVanAssche1991}, it was shown that $\mu$
is purely absolutely continuous on every $I_j$ with positive continuous density provided that
$(a_n), (b_n) \in \calD^N_{1, 0}$. Moreover, in \cite[Theorem 1]{Kaluzny2012}, it was proven that
$\mu_{\textrm{ac}}'(x) > 0$ for almost all $x \in \sigmaEss{A}$ provided that
\[
	\sum_{n=0}^\infty | \Delta_N a_n |^2 + 
	\sum_{n=0}^\infty | \Delta_N b_n |^2  < \infty.
\]
Furthermore, in \cite[Theorem 1.5]{Lukic2018} it was shown that this conclusion might not hold if \eqref{eq:115} is not
satisfied. This result is optimal in the sense that for $a_n \equiv 1$ and $p > 2$ the set of all sequences
$(b_n) \in \ell^p$ such that $\mu$ is purely singular continuous on $[-2, 2]$, is Baire typical 
(see \cite[Theorem 4.4]{Simon1995}). Hence, in order to obtain absolute continuity of $\mu$ for sequences satisfying
$(\Delta_N a_n : n \in \NN_0), (\Delta_N b_n : n \in \NN_0) \in \ell^p$ with $p>2$, one has to assume some additional
hypotheses. For example, in \cite[Theorem 1.2]{Lukic2011}, a sufficient condition was given to guarantee continuity and
positivity of the density of $\mu$ on $(-2,2) \setminus S$ for some finite set $S$. Under different conditions,
namely $(a_n), (b_n) \in \calD^N_{r, 0}$ for some $r \geq 1$, it was shown in \cite[Corollary 5.12]{Moszynski2009} that
$\mu$ is purely absolutely continuous on every $I_j$. The following corollary shows that in this setup one also has
positive continuous density.
\begin{corollary}
	\label{cor:4}
	Let $N$ and $r$ be positive integers. Suppose that the sequences $(a_{n} : n \in \NN_0)$ and $(b_{n} : n \in \NN_0)$
	belong to $\calD^N_{r, 0}$ and satisfy
	\begin{equation}
		\label{eq:14}
		\lim_{n \to \infty} |a_n - \alpha_n| = 0, \qquad\text{and}\qquad
		\lim_{n \to \infty} |b_n - \beta_n| = 0.
	\end{equation}
	Let $\calX_i$ be defined by \eqref{eq:114}, and let $K$ be a compact subset of
	\[
		\Lambda = \big\{ x \in \RR: | \tr \calX_0(x) | < 2 \big\}.
	\]
	Then
	\[
		g(x) = \lim_{n \to \infty} a_{n+N-1} 
		\big| p_{n}(x) p_{n+N-1}(x) - p_{n-1}(x) p_{n+N}(x) \big|, \qquad x \in K,
	\]
	defines a continuous strictly positive function. Moreover, $\mu$ is purely absolutely continuous on 
	$K$ with the density
	\[
		\mu'(x) = \frac{\sqrt{4 - \big(\tr \calX_0(x)\big)^2}}{2 \pi g(x)}, \qquad x \in K.
	\]
	There are $M>0$ and real continuous functions $\{\eta_0, \eta_1, \ldots, \eta_{N-1}\}$ on $K$, 
	such that for all $n > M$ and $i \in \{0, 1, \ldots, N-1\}$,
	\begin{equation} \label{eq:123}
		\sqrt{a_{nN+i-1}} p_{nN+i}(x) = 
		\sqrt{\frac{2 \big|[\calX_i(x)]_{2,1}\big|}{\pi \mu'(x) \sqrt{4 - \big(\tr \calX_0(x)\big)^2}}}
		\sin \Big( \sum_{j=M+1}^{n} \theta_{jN+i}(x) + \eta_i(x) \Big) + o_K(1), \quad x \in K
	\end{equation}
	for some continuous functions $\theta_m: K \rightarrow (0, \pi)$ satisfying
	\[
		\theta_m(x) = \arccos \Big( \tfrac{1}{2} \tr \calX_0(x) \Big) + o_K(1), \qquad x \in K.
	\]
\end{corollary}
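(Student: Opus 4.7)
The plan is to apply Corollaries~\ref{cor:8}, \ref{cor:9}, and \ref{cor:7} to each residue class $i \in \{0, 1, \ldots, N-1\}$ separately, then glue the resulting $N$ limit functions into the single $g$ and single measure $\mu$ asserted in the corollary. First I would verify the two standing hypotheses of those results. The ratio $a_{(n+1)N+i-1}/a_{nN+i-1}$ tends to $\alpha_{i-1}/\alpha_{i-1}=1$ by \eqref{eq:14} and $N$-periodicity of $(\alpha_n)$. For the Stolz class condition, note that \eqref{eq:14} forces $(a_n)$ to be eventually bounded away from $0$, and the hypotheses $(a_n),(b_n)\in\calD_{r,0}^N$ together with Corollaries~\ref{cor:1} and \ref{cor:2} show that $(a_{nN+i-1}/a_{nN+i})$, $(b_{nN+i}/a_{nN+i})$ and $(1/a_{nN+i})$ all belong to $\calD_{r,0}$; hence Proposition~\ref{prop:9} yields $(B_{nN+j}:n\in\NN)\in\calD_{r,0}(K,\GL(2,\RR))$ for every $j$, and Corollary~\ref{cor:1}\eqref{en:1} applied to the $N$-fold product gives $(X_{nN+i}:n\in\NN)\in\calD_{r,0}(K,\GL(2,\RR))$.

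Next I would identify the limit matrix. Since $a_n\to\alpha_n$ and $b_n\to\beta_n$ uniformly, $X_{nN+i}$ converges uniformly on $K$ to $\calX_i$ defined in \eqref{eq:114}. A direct calculation using $N$-periodicity gives $\det\calX_i=\prod_{j=i}^{N+i-1}\alpha_{j-1}/\alpha_j=1$, and the standard Floquet fact that $\tr\calX_i$ is independent of the starting index yields $\tr\calX_i=\tr\calX_0$. Consequently $\discr\calX_i=(\tr\calX_0)^2-4$, which is negative precisely on $\Lambda$, so $K\subset\Lambda$ is a compact subset of the set appearing in the hypotheses of Corollaries~\ref{cor:9} and \ref{cor:7} for every $i$.

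With hypotheses in place, I would apply Corollary~\ref{cor:9} to each $i$ and obtain a continuous positive function $g_i$ on $K$ satisfying
\[
\lim_{\stackrel{n\to\infty}{n\equiv i\bmod N}} \sup_{x\in K}\Big|a_{n+N-1}|\scrD_n(x)|-g_i(x)\Big|=0,
\]
together with a probability measure $\nu_i$ orthonormalizing $(p_n)$, absolutely continuous on $K$ with density $\sqrt{-h(x)}/(2\pi g_i(x))$, where $h(x)=(\tr\calX_0(x))^2-4$ is the same for every $i$. Here the key unifying step intervenes: since $(a_n)$ and $(b_n)$ are bounded, the Jacobi matrix $A$ is bounded and self-adjoint, so its moment problem is determinate; Corollary~\ref{cor:9} then forces $\nu_i=\mu$ for every $i$. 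Equality of the densities on $K$ forces $g_0=g_1=\cdots=g_{N-1}$, and this common function $g$ provides the existence of the unrestricted limit and the density formula for $\mu$ on $K$.

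Finally, I would apply Corollary~\ref{cor:7} to each residue class to obtain, for every $i$, continuous $\eta_i$ and phase functions $\theta_{jN+i}:K\to(0,\pi)$ satisfying $\theta_{jN+i}(x)=\arccos(\tfrac{1}{2}\tr\calX_i(x))+o_K(1)=\arccos(\tfrac{1}{2}\tr\calX_0(x))+o_K(1)$, yielding \eqref{eq:123}. The main obstacle is not any single computation but the coordination across residue classes — verifying that the measure, the Turán limit, and the asymptotic phase all collapse to $i$-independent objects. This collapse is entirely controlled by uniqueness of the orthogonality measure, so once boundedness of $A$ is invoked the remainder is a direct assembly of the previously proved theorems.
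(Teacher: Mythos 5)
Your proposal is correct and follows essentially the same route as the paper: verify the Stolz-class and ratio hypotheses via Proposition~\ref{prop:9} and Corollaries~\ref{cor:1}--\ref{cor:2}, identify the limit $\calX_i$ and use similarity to get $\discr\calX_i=\discr\calX_0$, then apply Corollaries~\ref{cor:9} and \ref{cor:7} for each residue class and use determinacy of the moment problem to force the measures (hence the $g_i$ and the density) to coincide. The only cosmetic difference is that you derive determinacy from boundedness of the Jacobi matrix, whereas the paper invokes the Carleman condition — both are immediate here.
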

\begin{proof}
	Since $(a_n : n \in \NN) \in \calD^N_{r,0}$ and $\inf_{n \geq 0} a_n > 0$, Corollary~\ref{cor:2} implies
	\[
		\Big( \frac{1}{a_n} : n \in \NN \Big) \in \calD^N_{r,0}.
	\]
	Thus, by Corollary~\ref{cor:1},
	\[
		\Big( \frac{a_{n-1}}{a_n} : n \in \NN \Big),\ 
		\Big( \frac{b_n}{a_n} : n \in \NN \Big) \in \calD^N_{r,0}.
	\]
	Let $K \subset \Lambda$ be compact and let $i \in \{0, 1, \ldots, N-1 \}$. By Proposition~\ref{prop:9}, for every 
	$j \in \{0, 1, \ldots, N-1 \}$,
	\[
		\big( B_{nN+j} : n \in \NN \big) \in \calD_{r, 0} \big(K, \GL(2, \RR) \big),
	\]
	which, by Corollary~\ref{cor:1}, implies that
	\[
		\big( X_{nN+i} : n \in \NN \big) \in \calD_{r, 0} \big(K, \GL(2, \RR) \big).
	\]
	By \eqref{eq:14},
	\begin{equation}
		\label{eq:134}
		\lim_{n \to \infty} X_{nN+i} = \calX_i
	\end{equation}
	locally uniformly on $\RR$. Since 
	\[
		\calX_{i} = 
		\big( \calB_{i-1} \ldots  \calB_0 \big) 
		\calX_0 
		\big( \calB_{i-1} \ldots  \calB_0 \big)^{-1}
	\]
	one has $\discr \calX_i = \discr \calX_0$, and consequently, \eqref{eq:134} gives
	\[
		\lim_{n \to \infty} \discr X_{nN+i} = \discr \calX_0 = (\tr \calX_0)^2 - 4.
	\]
	Moreover,
	\[
		\lim_{n \to \infty} \frac{a_{(n+1)N+i-1}}{a_{nN+i-1}} = \frac{\alpha_{i-1}}{\alpha_{i-1}} = 1.
	\]
	The Carleman condition entails that the moment problem for $(p_n : n \in \NN_0)$ is determinate. 
	Hence, by Corollary~\ref{cor:9},
	\[
		g_i(x) = \lim_{\stackrel{n \to \infty}{n \equiv i \bmod N}} 
		a_{n+N-1} \big|p_{n}(x) p_{n+N-1} - p_{n-1}(x) p_{n+N}(x)\big|, \qquad x \in K,
	\]
	defines a continuous and strictly positive function. Moreover,
	\[
		\mu'(x) = \frac{\sqrt{4 - \big( \tr \calX_0(x) \big)^2}}{2 \pi g_i(x)}, \qquad x \in K,
	\]
	and consequently, $g_i = g_0$ for every $i$. Finally, to complete the proof let us observe that the asymptotic
	\eqref{eq:123} is a consequence of Corollary~\ref{cor:7}.
\end{proof}
The asymptotic \eqref{eq:123} for $N=1$ and $r=1$ has been proven in \cite[Theorem 1]{Totik1985}. 
Later, the extension to $N > 1$ has been achieved in \cite[formula (3.27) and Theorem 5]{GeronimoVanAssche1991}. 
Here, the case $r > 1$ is a new result.

\subsection{Periodic modulations}
Let $N$ be a positive integer, and let $(\alpha_n : n \in \NN_0)$ and $(\beta_n : n \in \NN_0)$ be $N$-periodic sequences 
of positive and real numbers, respectively. Let $\calX_i$ be defined in \eqref{eq:114}. If the sequences 
$(a_n : n \in \NN_0)$ and $(b_n : n \in \NN_0)$ satisfy
\begin{equation} 
	\label{eq:117}
	\lim_{n \to \infty} a_n = \infty, \qquad
	\lim_{n \to \infty} \bigg| \frac{a_{n-1}}{a_n} - \frac{\alpha_{n-1}}{\alpha_n} \bigg| = 0, \qquad\text{and}\qquad
	\lim_{n \to \infty} \bigg| \frac{b_n}{a_n} - \frac{\beta_n}{\alpha_n} \bigg| = 0,
\end{equation}
then $A$ is called a Jacobi matrix with periodically modulated entries. A special case of this class has been studied
in \cite{JanasNaboko2002}, namely sequences satisfying $\sum_{n=0}^\infty 1/a_n = \infty$ and
\[
	a_n = \alpha_n \tilde{a}_n, \qquad
	b_n = \beta_n \tilde{a}_n
\]
for some sequences $(\tilde{a}_{n-1}/\tilde{a}_{n} : n \in \NN_0)$ and $(1/\tilde{a}_n : n \in \NN_0)$ belonging to
$\calD_{1, 0}$. There it was shown that the measure $\mu$ is purely absolutely continuous on $\RR$ provided that
$|\tr \calX_0(0)| < 2$ (see \cite[Theorem 3.1]{JanasNaboko2002}), and purely discrete one when $|\tr \calX_0(0)| > 2$
(see \cite[Theorem 4.2]{JanasNaboko2002}). Afterwards, in \cite[Theorem A]{PeriodicI}, these
results in the case $|\tr \calX_0(0)| < 2$ have been extended to sequences satisfying \eqref{eq:117} provided that
$(a_{n-1}/a_{n} : n \in \NN_0)$, $(b_n/a_n : n \in \NN_0)$ and $(1/a_n : n \in \NN_0)$ belong to $\calD^N_{1, 0}$. 
In \cite[Theorem 1]{PeriodicII} it was shown that under the same hypothesis the measure $\mu$ has continuous positive
density. In \cite[Theorem D]{PeriodicIII} some similar results were obtained in the case $|\tr \calX_0(0)| = 2$.

The following corollary is an extension of \cite[Theorem 1]{PeriodicII} to the general $r \geq 1$.
\begin{corollary}
	\label{cor:5}
	Let $N$ and $r$ be positive integers. Suppose that sequences $(a_{n-1}/a_n : n \geq 1)$, $(b_n/a_n : n \geq 0)$ and
	$(1/a_n : n \geq 0)$ belong to $\calD^N_{r, 0}$ and satisfy \eqref{eq:117}. Let $\calX_i(x)$ be defined in 
	\eqref{eq:114} and let $K$ be a compact subset of $\RR$. If $|\tr \calX_0(0)| < 2$, then for every 
	$i \in \{0, 1, \ldots, N-1 \}$,
	\begin{equation} \label{eq:145}
		g_i(x) 
		= \lim_{\stackrel{n \to \infty}{n \equiv i \bmod N}}
		a_{n+N-1} \big|p_{n}(x) p_{n+N-1}(x) - p_{n-1}(x) p_{n+N}(x)\big|, \qquad x \in K,
	\end{equation}
	defines a continuous strictly positive function. Moreover, the measure $\nu_i$ with the 
	density defined by
	\begin{equation} \label{eq:146}
		\nu_i'(x) = \frac{\sqrt{4 - \big(\tr \calX_0(0)\big)^2}}{2 \pi g_i(x)}, \qquad x \in \RR,
	\end{equation}
	is an orthonormalizing measure for the polynomials $(p_n : n \in \NN_0)$. Moreover, there
	are $M > 0$ and real continuous functions $\{\eta_0, \eta_1, \ldots, \eta_{N-1}\}$ on $K$, such that for all $n > M$
	and $i \in \{0, 1, \ldots, N-1\}$,
	\begin{equation} \label{eq:124}
		\sqrt{a_{nN+i-1}} p_{nN+i}(x) = 
		\sqrt{\frac{2 \big| [\calX_i(0)]_{2,1}\big|}
		{\pi \nu_i'(x) \sqrt{4 - \big(\tr \calX_0(x)\big)^2}}}
		\sin \Big( \sum_{j=M+1}^{n} \theta_{jN+i}(x) + \eta_i(x) \Big) + o_K(1), \quad x \in K
	\end{equation}
	for some continuous functions $\theta_m: K \rightarrow (0, \pi)$ satisfying
	\[
		\theta_m(x) = \arccos \Big( \tfrac{1}{2} \tr \calX_0(0) \Big) + o_K(1), \qquad x \in K.
	\]
\end{corollary}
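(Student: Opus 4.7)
The plan is to reduce Corollary~\ref{cor:5} to Corollaries~\ref{cor:7} and~\ref{cor:9} by verifying their hypotheses for every compact $K \subset \RR$, and then upgrading the conclusions to global statements on $\RR$.

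First I would identify the pointwise limit of the transfer matrices. From \eqref{eq:117} we have $1/a_n \to 0$, $a_{n-1}/a_n \to \alpha_{n-1}/\alpha_n$, and $b_n/a_n \to \beta_n/\alpha_n$; this forces $B_n(x) \to \calB_n(0)$ uniformly on compact subsets of $\RR$, and by multiplicativity $X_{nN+i}(x) \to \calX_i(0)$ uniformly on such sets. Because $\calX_i(0)$ is conjugate to $\calX_0(0)$ via $\calB_{i-1}(0)\cdots\calB_0(0)$, their traces agree; moreover $\det \calX_i(0) = \prod_{j=i}^{i+N-1} \alpha_{j-1}/\alpha_j = 1$ by $N$-periodicity of $\alpha$. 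Consequently $\discr X_{nN+i}(x) \to (\tr \calX_0(0))^2 - 4 < 0$ uniformly on compacts, so every compact $K \subset \RR$ lies in $\Lambda$. The ratio $a_{(n+1)N+i-1}/a_{nN+i-1}$ telescopes into $N$ factors of the form $a_j/a_{j-1}$ whose product converges to $1$ by periodicity. Finally, Proposition~\ref{prop:9} applied to each shifted family $(B_{nN+j})_n$, combined with the algebra property Corollary~\ref{cor:1}(i) applied to the matrix product defining $X_{nN+i}$, yields $(X_{nN+i}) \in \calD_{r,0}(K, \GL(2, \RR))$.

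With these hypotheses in place on an arbitrary compact $K \subset \RR$, Corollary~\ref{cor:9} directly produces the continuous strictly positive limit $g_i$ of \eqref{eq:145} on $K$ together with an orthonormalizing probability measure, absolutely continuous on $K$ with density $\sqrt{4-(\tr \calX_0(0))^2}/(2\pi g_i(x))$. The asymptotic \eqref{eq:124} then follows by applying Corollary~\ref{cor:7} with $\calX = \calX_i(0)$, $\nu = \nu_i$, and the constant function $h(x) \equiv (\tr \calX_0(0))^2 - 4$; the convergence $\theta_m(x) \to \arccos(\tfrac{1}{2}\tr \calX_0(0))$ required in the statement is immediate from the uniform convergence of $\tr X_{nN+i}(x)$ on $K$.

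The main obstacle is the upgrade from \emph{local} absolute continuity on each compact $K$ to a single \emph{global} absolutely continuous orthonormalizing measure $\nu_i$ on $\RR$ satisfying \eqref{eq:146}, since the hypothesis $a_n \to \infty$ does not guarantee that the moment problem for $(p_n)$ is determinate and different compacts could a priori select different accumulation points. I would resolve this by noting that the measure supplied by Theorem~\ref{thm:6} is constructed as a weak accumulation point of the truncated measures $(\mu_{jN+i})_j$, a choice independent of $K$; fixing one such $\nu_i$ thus automatically delivers the prescribed absolutely continuous structure simultaneously on every compact $K_n = [-n,n]$, forcing absolute continuity on all of $\RR$ with the global density \eqref{eq:146}.
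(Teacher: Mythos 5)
Your proposal is correct and follows essentially the same route as the paper's own proof: Proposition~\ref{prop:9} together with Corollary~\ref{cor:1} gives $(X_{nN+i}) \in \calD_{r,0}\big(K,\GL(2,\RR)\big)$, the limit $X_{nN+i}(x)\to\calX_i(0)$ combined with the conjugation $\calX_i=(\calB_{i-1}\cdots\calB_0)\,\calX_0\,(\calB_{i-1}\cdots\calB_0)^{-1}$ identifies the discriminant limit $(\tr\calX_0(0))^2-4<0$, the telescoping product yields $a_{(n+1)N+i-1}/a_{nN+i-1}\to 1$, and then Corollaries~\ref{cor:9} and \ref{cor:7} are applied. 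Your additional step of fixing one weak accumulation point of $(\mu_{jN+i})$ independently of $K$, so that the density formula \eqref{eq:146} holds globally on $\RR$ even though the moment problem may be indeterminate, makes explicit a point the paper leaves implicit, and your citation of Corollary~\ref{cor:7} for the asymptotic \eqref{eq:124} is the intended reference (the paper's appeal to Corollary~\ref{cor:8} there is evidently a slip).
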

\begin{proof}
Let $K \subset \Lambda$ be compact and let $i \in \{0, 1, \ldots, N-1 \}$.
Then Proposition~\ref{prop:9} implies that for every $j \in \{0, 1, \ldots, N-1 \}$,
\[
	\big( B_{nN+j} : n \in \NN \big) \in \calD_{r, 0} \big(K, \GL(2, \RR) \big),
\]
and, by Corollary~\ref{cor:1}, we obtain
\[
	\big( X_{nN+i} : n \in \NN \big) \in \calD_{r, 0} \big(K, \GL(2, \RR) \big).
\]
By \eqref{eq:117},
\begin{equation}
	\label{eq:19}
	\lim_{n \to \infty} X_{nN+i}(x)= \calX_i(0)
\end{equation}
locally uniformly on $\RR$. Since 
\[
	\calX_{i} = 
	\big( \calB_{i-1} \ldots  \calB_0 \big) 
	\calX_0 
	\big( \calB_{i-1} \ldots  \calB_0 \big)^{-1}
\]
one has $\discr \calX_i = \discr \calX_0$, and consequently, \eqref{eq:19} implies
\[
	\lim_{n \to \infty} \discr X_{nN+i}(x) = \discr \calX_0(0) = \big( \tr \calX_0(0) \big)^2 - 4.
\]
Moreover, by \eqref{eq:117}
\[
	\lim_{n \to \infty} 
	\bigg| \prod_{j=0}^{N-1} \frac{a_{n+j-1}}{a_{n+j}} - 
	\prod_{j=0}^{N-1} \frac{\alpha_{n+j-1}}{\alpha_{n+j}} \bigg| = 0.
\]
Since the products are telescoping, we obtain
\[
	\lim_{n \to \infty} \bigg| \frac{a_{n-1}}{a_{n+N-1}} - 1 \bigg| = 0.
\]
Hence, Corollary~\ref{cor:9} implies the existence of the limit \eqref{eq:145} and the 
formula \eqref{eq:146}. Finally, the asymptotic \eqref{eq:124} follows from Corollary~\ref{cor:8}. 
The proof is complete.
\end{proof}

Observe that if $\sum_{n=0}^\infty 1/a_n = \infty$, then $g_i = g_0$ for every 
$i \in \{0, 1, \ldots, N-1\}$. We expect that it is always the case without an additional hypothesis.

Let us discuss the case $N=1$ and $\sum_{n=0}^\infty 1/a_n = \infty$. In \cite[Theorem 3.1]{JanasNaboko2001} it was proven that the measure $\nu$ is absolutely continuous provided $r \geq 1$.
Moreover, in \cite[Theorem 3]{AptekarevGeronimo2016} the asymptotic \eqref{eq:124} was proven under the assumption $r=1$. For $N>1$ the result is new even for $r=1$.

\subsection{A blend of bounded and unbounded parameters}
Let $N$ be a positive integer, and let $(\alpha_n : n \in \NN_0)$ and $(\beta_n : n \in \NN_0)$ be 
$N$-periodic sequences of positive and real numbers, respectively. Suppose that positive sequences 
$\tilde{a}$ and $\tilde{c}$, and a real sequence $\tilde{b}$, satisfy
\begin{equation} 
\label{eq:118}
	\lim_{n \to \infty} \big|\tilde{a}_n - \alpha_n\big| = 0, \qquad 
	\lim_{n \to \infty} \big|\tilde{b}_n - \beta_n\big| = 0, \qquad\text{and}\qquad
	\lim_{n \to \infty} \tilde{c}_n = \infty.
\end{equation}
For $k \geq 0$ and $i \in \{0, 1, \ldots, N+1 \}$, we define
\begin{equation} 
	\label{eq:119}
	\begin{aligned}
	a_{k(N+2) + i} &=
	\begin{cases}
		\tilde{a}_{kN+i} & \text{if } i \in \{ 0, 1, \ldots, N-1 \}, \\
		\tilde{c}_{2k+i-N} & \text{if } i \in \{ N, N+1 \},
	\end{cases} \\
	b_{k(N+2) + i} &= 
	\begin{cases}
		\tilde{b}_{kN+i} & \text{if } i \in \{ 0, 1, \ldots, N-1 \}, \\
		0 & \text{if } i \in \{ N, N+1 \}.
	\end{cases}
	\end{aligned}
\end{equation}
The sequences of the form \eqref{eq:119} were considered in \cite[Theorem 5]{Janas2011} where it was assumed that 
\[
	(\tilde{a}_n - \alpha_n : n \in \NN_0) \in \ell^2 \cap \calD_{1,0}^N, \qquad 
	\tilde{b}_n = 0, \qquad \tilde{c}_{2k+i} = (k+1)^\tau + \gamma_{i, k},
\]
for some $\tau \in \big(\tfrac{1}{2}, 1\big)$, and
\[
	\bigg( \frac{\gamma_{i, k}}{(k+1)^\tau} : k \in \NN_0 \bigg) \in \ell^2 \cap \calD_{1,0}, \qquad i\in\{0,1\}.
\]
Under the above hypotheses it was shown that the measure $\mu$ is purely absolutely continuous on $\Lambda$
and $\sigmaEss{A} = \overline{\Lambda}$. The following corollary additionally implies that its density is continuous and
positive, and provides an asymptotic information on some subsequences of the orthogonal polynomials.
\begin{corollary} 
\label{cor:6}
	Let $N$ and $r$ be positive integers. Suppose that \eqref{eq:118} is satisfied, together with
	\[
		\lim_{k \to \infty} \frac{\tilde{c}_{2k+1}}{\tilde{c}_{2k}} = 1,
	\]
	\[
	\big( \tilde{a}_n : n \in \NN_0 \big),\ \big( \tilde{b}_n : n \in \NN_0 \big) \in \calD^N_{r,0},
	\]
	and
	\[
	\bigg( \frac{1}{\tilde{c}_{2n} \tilde{c}_{2n+1}} : n \in \NN_0 \bigg),\ 
	\bigg( \frac{\tilde{c}_{2n+1}}{\tilde{c}_{2n}} : n \in \NN_0 \bigg) \in \calD_{r,0}.
	\]
Let the sequences $(a_n : n \in \NN_0)$ and $(b_n : n \in \NN_0)$ be defined in \eqref{eq:119}. For
$i \in \{1, 2, \ldots, N \}$, we set
\[
	\calX_i(x) =
	\Bigg\{
	\prod_{j=1}^{i-1} 
	\calB_j(x)
	\Bigg\}
	\calC(x)
	\Bigg\{
	\prod_{j=i}^{N-1} 
	\calB_j(x)
	\Bigg\}
\]
where 
\[
	\calB_j(x) =
	\begin{pmatrix}
		0 & 1 \\
		-\frac{\alpha_{j-1}}{\alpha_j} & \frac{x-\beta_j}{\alpha_j}
	\end{pmatrix},
	\qquad\text{and}\qquad
	\calC(x) = 	
	\begin{pmatrix}
		0 & -1 \\
		\frac{\alpha_{N-1}}{\alpha_0} & -\frac{2x - \beta_0}{\alpha_0}
	\end{pmatrix}.
\]
Let $K$ be a compact subset of
\[
	\Lambda = \big\{ x \in \RR : |\tr \calX_1(x)| < 2 \big\}.
\]
Then for every $i \in \{1, 2, \ldots, N \}$,
\begin{equation}
	\label{eq:120}
	g_i(x) = \lim_{\stackrel{n \to \infty}{n \equiv i \bmod (N+2)}} 
	a_{n+N+1} \big| p_{n}(x) p_{n+N-1}(x) - p_{n-1}(x) p_{n+N}(x) \big|, \qquad x \in K,
\end{equation}
defines a continuous strictly positive function. Moreover, the measure $\mu$ is purely absolutely
continuous on $K$ with the density
\begin{equation}
	\label{eq:121}
	\mu'(x) = \frac{\sqrt{4 - \big(\tr \calX_1(x)\big)^2}}{2 \pi g_i(x)}, \qquad x \in K.
\end{equation}
There are $M>0$ and real continuous functions $\{\eta_1, \eta_2, \ldots, \eta_{N}\}$ on $K$, such that for all
$n > M$ and $i \in \{1, 2, \ldots, N\}$,
\begin{equation} 
	\label{eq:150}
	\begin{aligned}
	\sqrt{a_{n(N+2)+i-1}} p_{n(N+2)+i}(x) 
	&= \sqrt{\frac{2 \big| [\calX_i(x)]_{2,1} \big|}
	{\pi \mu'(x) \sqrt{4 - \big(\tr \calX_0(x)\big)^2}}}
	\sin \Big( \sum_{j=M+1}^{n} \theta_{j(N+2)+i}(x) + \eta_i(x) \Big) \\
	&\phantom{=}+ o_K(1), \qquad x \in K,
	\end{aligned}
\end{equation}
for some continuous functions $\theta_m: K \rightarrow (0, \pi)$ satisfying
\[
	\theta_m(x) = \arccos \Big( \tfrac{1}{2} \tr \calX_1(x) \Big) + o_K(1), \qquad x \in K.
\]
\end{corollary}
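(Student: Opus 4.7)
The plan is to apply Corollaries \ref{cor:7}, \ref{cor:8}, and \ref{cor:9} with the period $N+2$ playing the role of $N$ in the general framework. For each $i \in \{1, 2, \ldots, N\}$ the relevant transfer matrix is
\[
	X_{n(N+2)+i}(x) = \prod_{j=n(N+2)+i}^{n(N+2)+i+N+1} B_j(x),
\]
and what must be checked is: (a) $a_{(n+1)(N+2)+i-1}/a_{n(N+2)+i-1} \to 1$; (b) $(X_{n(N+2)+i}) \in \calD_{r, 0}(K, \GL(2, \RR))$; and (c) $\discr X_{n(N+2)+i}$ converges uniformly on $K$ to a negative limit. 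Condition (a) is immediate: for $i \in \{1, \ldots, N\}$ both indices correspond to $\tilde a$-positions with common limit $\alpha_{i-1}$. The Carleman condition also holds because $\tilde a$ is bounded, so the moment problem is determinate and $\mu$ is unique.

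For (b) and (c) I would factor the product as $X_{n(N+2)+i}(x) = L_n(x)\, T_n(x)\, R_n(x)$, where
\begin{align*}
	L_n &= B_{(n+1)(N+2)+i-1} \cdots B_{(n+1)(N+2)+1}, \\
	R_n &= B_{n(N+2)+N-1} \cdots B_{n(N+2)+i}, \\
	T_n &= B_{(n+1)(N+2)} \, B_{n(N+2)+N+1} \, B_{n(N+2)+N},
\end{align*}
with $L_n$ or $R_n$ set to $\Id$ when the index range is empty. The factors in $L_n$ and $R_n$ involve only $\tilde a$- and $\tilde b$-entries, so for each fixed shift $j \in \{1, \ldots, N-1\}$ the sequence $(B_{n(N+2)+j})$ lies in $\calD_{r, 0}(K, \GL(2, \RR))$ with limit $\calB_j$ by Proposition \ref{prop:9} applied with $N+2$ in place of $N$; the hypotheses of that proposition reduce to $\tilde a, \tilde b \in \calD^N_{r, 0}$ via Corollaries \ref{cor:1} and \ref{cor:2}.

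The crux lies in handling $T_n$, which straddles the two consecutive unbounded coefficients. A direct three-fold multiplication shows that every entry of $T_n(x)$ is polynomial in $x$ with coefficients in the ring generated by
\[
	\frac{\tilde{c}_{2n}}{\tilde{c}_{2n+1}}, \quad \frac{\tilde{c}_{2n+1}}{\tilde{c}_{2n}}, \quad \frac{1}{\tilde{c}_{2n} \tilde{c}_{2n+1}}, \quad \tilde{a}_{nN+N-1}, \quad \frac{1}{\tilde{a}_{(n+1)N}}, \quad \tilde{b}_{(n+1)N}.
\]
Crucially, no isolated $1/\tilde c_{2n}$ or $\tilde c_{2n+1}/\tilde a_{(n+1)N}$ arises: the potentially unbounded ratio $\tilde c_{2n+1}/\tilde a_{(n+1)N}$ from the leftmost factor always appears multiplied by $1/\tilde c_{2n}$ from $B_{n(N+2)+N+1}$ or $B_{n(N+2)+N}$, producing $(\tilde c_{2n+1}/\tilde c_{2n})(1/\tilde a_{(n+1)N})$. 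Hence by the hypotheses on $\tilde c$ together with Corollaries \ref{cor:1} and \ref{cor:2} (using $\inf_n \tilde c_{2n+1}/\tilde c_{2n} > 0$), each entry lies in $\calD_{r, 0}(K, \RR)$, so $T_n \in \calD_{r, 0}(K, \GL(2, \RR))$; passing to the limit via $\tilde c_{2n+1}/\tilde c_{2n} \to 1$ and $1/(\tilde c_{2n}\tilde c_{2n+1}) \to 0$ then gives $T_n \to \calC$ uniformly on $K$. Verifying this bookkeeping — that no forbidden unbounded ratios appear — is the main technical obstacle.

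Combining $L_n$, $T_n$, $R_n$ via the algebra property Corollary \ref{cor:1}(i) yields (b) together with $\lim_n X_{n(N+2)+i} = \calX_i$. Since $\calX_i$ is conjugate to $\calX_1$ (through $\calB_{i-1} \cdots \calB_1$), one has $\tr \calX_i = \tr \calX_1$, and the telescoping $\det \calB_j = \alpha_{j-1}/\alpha_j$ combined with $\det \calC = \alpha_{N-1}/\alpha_0$ gives $\det \calX_i = 1$, hence $\discr \calX_i = (\tr \calX_1)^2 - 4 < 0$ on $K \subset \Lambda$, yielding (c). Corollaries \ref{cor:8} and \ref{cor:9} then furnish the existence of the limit $g_i$ in \eqref{eq:120} and the density formula \eqref{eq:121}, while Corollary \ref{cor:7} gives the asymptotic \eqref{eq:150}, with the continuous functions $\theta_m$ and the phases $\eta_i$ inherited directly from that corollary.
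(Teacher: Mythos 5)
Your proposal is correct and takes essentially the same route as the paper: your factorization $X_{n(N+2)+i}=L_n T_n R_n$ is exactly the paper's decomposition, where your middle block $T_n$ is the matrix the paper denotes $C_n$ and computes explicitly, confirming your bookkeeping that only the ratios $\tilde{c}_{2n}/\tilde{c}_{2n+1}$, $\tilde{c}_{2n+1}/\tilde{c}_{2n}$ and $1/(\tilde{c}_{2n}\tilde{c}_{2n+1})$ occur. The remaining steps — Stolz membership of the outer blocks via Proposition~\ref{prop:9} and Corollaries~\ref{cor:1}--\ref{cor:2}, the conjugation/trace argument giving $\discr\calX_i=(\tr\calX_1)^2-4<0$ on $K$, and the conclusion via Corollaries~\ref{cor:9} and \ref{cor:7} together with determinacy from the Carleman condition — coincide with the paper's argument.
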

\begin{proof}
Let $K \subset \Lambda$ be compact and let $i \in \{1, \ldots, N \}$.
Define
\[
	\tilde{B}_n(x) =
	\begin{pmatrix}
		0 & 1 \\
		-\frac{\tilde{a}_{n-1}}{\tilde{a}_{n}} & \frac{x - \tilde{b}_n}{\tilde{a}_n}
	\end{pmatrix}.
\]
Then the proof of Corollary~\ref{cor:4} implies that for every $j \in \{1, \ldots, N-1\}$,
\begin{equation} \label{eq:147}
	(\tilde{B}_{nN+j} : n \in \NN) \in \calD_{r, 0} \big( K, \GL(2, \RR) \big).
\end{equation}
Since $B_{n(N+2)+j} = \tilde{B}_{nN+j}$,
\begin{equation} \label{eq:148}
	X_{n(N+2)+i} = 
	\prod_{j=i}^{N+2+i-1} B_{n(N+2)+j} = 
	\Bigg\{
	\prod_{j=1}^{i-1} \tilde{B}_{(n+1)N+j}
	\Bigg\}
	C_n
	\Bigg\{
	\prod_{j=i}^{N-1} \tilde{B}_{nN+j}
	\Bigg\}
\end{equation}
where
\[
	C_n = B_{(n+1)(N+2)} B_{n(N+2)+N+1} B_{n(N+2)+N}.
\]
A direct computation shows that
\begin{align*}
	C_n(x) &=
	\begin{pmatrix}
		0 & -\frac{\tilde{c}_{2n}}{\tilde{c}_{2n+1}} \\
		\frac{\tilde{a}_{nN+N-1}}{\tilde{a}_{nN+N}} \frac{\tilde{c}_{2n+1}}{\tilde{c}_{2n}} & 
		-\frac{x-\tilde{b}_{nN+N}}{\tilde{a}_{nN+N}} \frac{\tilde{c}_{2n}}{\tilde{c}_{2n+1}}
		-\frac{x}{\tilde{a}_{nN+N}} \frac{\tilde{c}_{2n+1}}{\tilde{c}_{2n}}
	\end{pmatrix} \\
	&\phantom{=}+
	\frac{x}{\tilde{c}_{2n} \tilde{c}_{2n+1}}
	\begin{pmatrix}
		-\tilde{a}_{nN+N-1} & x \\
		\tilde{a}_{nN+N-1} \frac{x - \tilde{b}_{nN+N}}{\tilde{a}_{nN+N}} &
		x \frac{x-\tilde{b}_{nN+N}}{\tilde{a}_{nN+N}}
	\end{pmatrix}.
\end{align*}
Therefore, by Corollary~\ref{cor:1}, 
\begin{equation} \label{eq:149}
	(C_n : n \in \NN) \in \calD_{r, 0} \big( K, \GL(2, \RR) \big).
\end{equation}
Moreover,
\begin{equation} \label{eq:151}
	\lim_{n \to \infty} \sup_{x \in K} \big\| C_{n}(x) - \calC(x) \big\| = 0.
\end{equation}
Now, Corollary~\ref{cor:1} together with \eqref{eq:147}, \eqref{eq:148} and \eqref{eq:149} imply
\[
	(X_{n(N+2)+i} : n \in \NN) \in \calD_{r, 0} \big( K, \GL(2, \RR) \big).
\]
Since for $j \in \{1, \ldots, N-1 \}$,
\[
	\lim_{n \to \infty} \sup_{x \in K} \big\| \tilde{B}_{nN+j}(x) - \calB_j(x) \big\| = 0,
\]
by \eqref{eq:148} and \eqref{eq:151} one gets
\begin{equation} \label{eq:122}
	\lim_{n \to \infty} \sup_{x \in K} \big\| X_{n(N+2)+i}(x) - \calX_i(x) \big\| = 0.
\end{equation}
Let us observe that 
\[
	\prod_{j=1}^{N-1} \calB_j(x) = 
	\bigg\{ \prod_{j=i}^{N-1} \calB_j(x) \bigg\} 
	\bigg\{ \prod_{j=1}^{i-1} \calB_j(x) \bigg\},
\]
thus using $\tr (AB) = \tr(BA)$, we conclude that for every $i \in \{1, 2, \ldots, N \}$,
\[
	\discr \big( \calX_i(x) \big) = 
	\discr \big( \calX_1(x) \big).
\]
Hence, by \eqref{eq:122},
\[
	\lim_{n \to \infty} \discr X_{n(N+2)+i}(x) = 
	\discr \calX_1(x) = \big(\tr \calX_1(x)\big)^2 - 4.
\]
Since
\[
	\lim_{n \to \infty}
	\frac{a_{(n+1)(N+2)+i-1}}{a_{n(N+2)+i-1}} 
	=
	\lim_{n \to \infty}
	\frac{\tilde{a}_{(n+1)N+i-1}}{\tilde{a}_{nN+i-1}} = 1,
\]
Corollary~\ref{cor:9} implies the existence of the limit \eqref{eq:120}.
The Carleman condition gives that the moment problem for $(p_n : n \in \NN_0)$ is determinate. 
Hence, by Corollary~\ref{cor:9} we obtain \eqref{eq:121} and consequently, $g_i = g_1$. Finally, 
the asymptotic \eqref{eq:150} follows from Corollary~\ref{cor:7}. This completes the proof.
\end{proof}

\begin{bibliography}{jacobi}
	\bibliographystyle{amsplain}
\end{bibliography}

\end{document}